\let\mod=\undefined
\DeclareMathOperator{\Ext}{Ext}
\DeclareMathOperator{\Hom}{Hom}
\DeclareMathOperator{\Aut}{Aut}
\DeclareMathOperator{\HomP}{\underline{Hom}}
\DeclareMathOperator{\GL}{GL}
\DeclareMathOperator{\End}{End}
\DeclareMathOperator{\Mat}{Mat}
\DeclareMathOperator{\mod}{mod}
\DeclareMathOperator{\Ld}{Ld}
\DeclareMathOperator{\rank}{rank}
\DeclareMathOperator{\im}{im}
\DeclareMathOperator{\Span}{span}
\DeclareMathOperator{\sub}{sub}
\DeclareMathOperator{\fac}{fac}
\DeclareMathOperator{\parti}{part}
\DeclareMathOperator{\spani}{span}
\newcommand{\op}{{op}}
\newcommand{\N}{\mathbb N}
\newcommand{\Z}{\mathbb Z}
\newcommand{\defeq}{\mathrel{:=}}
\newcommand{\ov}{\overline}
\newtheorem*{mainthm}{Theorem}
\newtheorem{thm}{Theorem}[section]
\newtheorem{cor}[thm]{Corollary}
\newtheorem{lem}[thm]{Lemma}
\newtheorem{prop}[thm]{Proposition}
\newtheorem{remark}[thm]{Remark}
\numberwithin{equation}{section}
\begin{document}

\title{On regular irreducible components of module varieties over string algebras}
\author {M.~Rutscho}
\maketitle

\section*{Abstract}
We determine the regular irreducible components of the variety $\mod(\mathcal A,d)$, where $\mathcal A=kQ/I$ is a string algebra and $I$ is generated by a set of paths of length two. Our case is among the first examples of descriptions of irreducible components, aside from hereditary, tubular (see \cite{GeissSchroer})  and Gelfand-Ponomarev algebras (see \cite{SchroerGelfandPonomarev}).

\section{Introduction}

Fix an algebraically closed field k, and let $\mathcal A$ be a finite dimensional associative $k$-algebra with a unit. By $\mod(\mathcal A,d)$ we denote the affine subvariety of $\Hom_{k}(\mathcal A,M_d(k))$ of $k$-algebra homomorphisms $A \longrightarrow M_d(k)$, where $M_d(k)$ is the algebra of $d\times d$ matrices over $k$. The algebraic group $\GL_d(k)$
of invertible $d\times d$ matrices acts on $\mod(\mathcal A,d)$ by conjugation. The $\GL_d(k)$-orbits in $\mod(\mathcal A,d)$ are in one-to-one correspondence with the isomorphism classes of $d$-dimensional left $\mathcal A$-modules.

We recall the definition of a string algebra and describe its finite dimensional modules. A quiver is a tuple $Q=(Q_0,Q_1,s,t)$ consisting of a finite set of vertices $Q_0$, a finite set of arrows $Q_1$ and maps $s,t:Q_1 \longrightarrow Q_0$. We call $s(\alpha)$ the source and $t(\alpha)$ the target of the arrow $\alpha \in Q_1$.
A path in $Q$ is either $1_u$, where $u$ is a vertex of $Q$, or a finite sequence $\alpha_1\dots \alpha_n$ of arrows of $Q$, satisfying $s(\alpha_{i})=t(\alpha_{i+1})$. The path algebra $kQ$ is the $k$-algebra with $k$-basis $\{p : p \text{ is a path in }Q\}$, where the product of two paths $p,q$ is the concatenation $pq$ in case $s(p)=t(q)$ and zero otherwise.
A string algebra $\mathcal A$ over $k$ is an algebra of the form $\mathcal A=kQ/I$ for a quiver $Q$ and an ideal $I$ in $kQ$ satisfying the following:
\begin{itemize}
\item $I$ is admissible, i.e. there is an $n \in \N$ with $(kQ^+)^n \subseteq I \subseteq (kQ^+)^2$, where
$kQ^+$ is the ideal generated by all arrows.
\item $I$ is generated by a set of paths in $Q$.
\item There are at most two arrows with source $u$ for any vertex $u$ of $Q$.
\item There are at most two arrows with target $u$ for any vertex $u$ of $Q$.
\item For any arrow $\alpha$ of $Q$ there is a most one arrow $\beta$ with $s(\alpha)=t(\beta)$ and $\alpha\beta \notin I$.
\item For any arrow $\beta$ of $Q$ there is a most one arrow $\alpha$ with $s(\alpha)=t(\beta)$ and $\alpha\beta \notin I$.

\end{itemize}

Fix a string algebra $\mathcal A =kQ/I$. The opposite quiver $Q^{\op}$ is $(Q_0,Q_1^{-1},s,t)$, where $Q_1^{-1}= \{\alpha^{-1}: \alpha \in Q_1\}$, $s(\alpha^{-1})\defeq t(\alpha)$ and $t(\alpha^{-1})\defeq s(\alpha)$. A string (of $\mathcal A$) is either $1_u$, where $u$ is a vertex of $Q$, or a finite sequence $\alpha_1\cdots \alpha_n$ of arrows of $Q$ and $Q^{op}$, satisfying $s(\alpha_{i})=t(\alpha_{i+1})$ and $\alpha_i \neq \alpha_{i+1}^{-1}$ such that none of its partial strings $\alpha_i \cdots \alpha_j$  nor its inverse $\alpha_j^{-1}\cdots \alpha_i^{-1}$ belongs to $I$. By $\mathcal W$ we denote the set of all strings.
 Let $c$ be a string. We set $s(c)\defeq t(c) \defeq u$ in case $c=1_u$ and $s(c)\defeq s(\alpha_n)$ and $t(c)=t(\alpha_1)$, in case $c=\alpha_1\cdots \alpha_n$. We say that $c$ starts/ends with an (inverse) arrow if $c=\alpha_1\cdots\alpha_n$ and $\alpha_n \in Q_1$ ($\in Q_1^{-1}$), $\alpha_1 \in Q_1$ ($\in Q_1^{-1}$), respectively.  We define the inverse string  $c^{-1}$ of $c$ by $c^{-1} \defeq 1_u$ in case $c=1_u$ and $c^{-1} \defeq \alpha_n^{-1} \cdots \alpha_1^{-1}$ if $c= \alpha_1 \cdots \alpha_n$. The length $l(c)$ of $c$ is $0$ if $c=1_u$ and $n$ if $c=\alpha_1\cdots \alpha_n$. We call the strings of length $0$ trivial. Note that the concatenation $cd$ of two strings $c$ and $d$ is not necessarily a string.

\begin{remark}
\begin{itemize}
\item[]
\item A string $c$ is trivial if and only if $c=c^{-1}$.
\item If $I$ is generated by paths of length two, then $c=\alpha_1\cdots\alpha_n$ is a string if and only if $\alpha_i\alpha_{i+1}$ is a string for all $1 \leq i \leq n-1$.
\end{itemize}
\end{remark}

By $\Ld(c) \defeq \{c': c=c'c''\}$ we denote the set of leftdivisors of $c$. 
For any string $c$ we define the string module $M(c)$ with basis
\\
$\{e_{c'}: c' \in \Ld(c)\}$ by
\[
p \cdot e_{c'}= \begin{cases} 
e_{c'p^{-1}}&\text{if } c'p^{-1} \in \Ld(c), \\
e_{c''} & \text{if } c'=c''p ,\\
0 & \text{otherwise,}
 \end{cases}
\]
for any path $p$ in $Q$. For an arrow $\alpha$ and a vertex $u$ we thus have
\[
\begin{xy}
\xymatrix{
e_{\alpha_1\cdots \alpha_{i-1}} \ar[r]^{\alpha}& e_{\alpha_1\cdots \alpha_{i} } & \text{if } \alpha=\alpha_i^{-1},\\
e_{\alpha_1\cdots \alpha_{i-1}} & e_{\alpha_1\cdots \alpha_{i} }\ar[l]_/-0.3em/{\alpha} & \text{if } \alpha=\alpha_i,\\
}
\end{xy}
\]
\[
1_u \cdot e_{\alpha_1\cdots\alpha_i}=\begin{cases}e_{\alpha_1\cdots\alpha_i} & \text{if } u=s(\alpha_i),\\
0& \text{if } u\neq s(\alpha_i).
\end{cases}
\]

By \cite{ButlerRingel} string modules are indecomposable and two string modules $M(c)$ and $M(d)$ are isomorphic if and only if $c=d$ or $c=d^{-1}$.
An isomorphism $M(c)\longrightarrow M(c^{-1})$ is given by sending $e_{c'}$ to $e_{c''^{-1}}$ for $c' \in \Ld(c)$ with $c=c'c''$. We will refer to such an isomorphism as "the isomorphism from $M(c)$ to $M(c^{-1})$".

Aside from string modules there is another type of indecomposable (finite dimensional) $\mathcal A$-modules, the band modules. To make it easier to describe degenerations (see section \ref{deg}), we also define quasi-band modules, which are a generalization of band modules.

A quasi-band $(b,m)$ is a map $b:\Z \longrightarrow Q_1 \cup Q_1^{-1}$ together with an integer $m \geq 1$ such that $b(i)=b(i+m)$ for all $i \in  \Z$ and $b(i)b(i+1)\cdots b(i+n)$ is a string for all $i \in \Z$ and all $n \geq 0$. Frequently we will just write $(b,m)=b(1)\cdots b(m)$.  A quasi-band $(b,m)$ is called a band provided $(b,m')$ is not a quasi-band for any $0<m'<m$.
For any quasi-band $(b,m)$ and any $\phi \in \Aut_k(V)$, where $V$ is a finite dimensional $k$-vector space, we define the quasi-band module $M(b,m,\phi)$ in the following way. First we define an (infinite dimensional) $\mathcal A$-module $M(b)$ with basis $\{e_i: i \in \Z\}$ by
\[
p  \cdot e_{i}= \begin{cases} 
 e_{j}& \text{if there is a } j \geq i \text{ such that } b(i)p^{-1}=b(i)\cdots b(j),\\
 e_{j-1} & \text{if there is a } j \leq i+1 \text{ such that } pb(i+1)=b(j)\cdots b(i+1),\\
0 & \text{otherwise,}
 \end{cases}
\]
for any path $p$ in $Q$. Note that we write $pb(i+1)=b(j)\cdots b(i+1)$ instead of $p=b(j) \cdots b(i)$ in order to include trivial paths. For an arrow $\alpha$ and and a vertex $u$ we thus have
\[
\begin{xy}
\xymatrix{
e_{i-1} \ar[r]^\alpha &e_{i} & \text{if } \alpha=b(i)^{-1},\\
e_{i-1}  &e_{i}\ar[l]^\alpha & \text{if } \alpha=b(i),\\
}
\end{xy}
\]
\[
1_u \cdot e_{i}=\begin{cases}e_{i} & \text{if } u=s(b(i)),\\
0& \text{if } u\neq s(b(i)).
\end{cases}
\]

We define an $\mathcal A$-module structure on $V \otimes_k M(b)$ by setting
\[p\cdot (v \otimes w)\defeq v \otimes (p \cdot w)\]
 for any path $p$ in $Q$. Finally we set 
\[
M(b,m,\phi) \defeq V  \otimes_k M(b)/\Span_k(\{v \otimes e_i -\phi(v) \otimes e_{i+m}: v \in V, i  \in \Z\}).
\]
In case $V=k$ the automorphism $\phi$ is given by multiplication with a $\lambda \in k^*$ and we set $M(b,m,\lambda)=M(b,m,\phi)$.
We call $M(b,m,\phi)$ a band module provided $(b,m)$ is a band and the $k[x]$-module defined by $\phi$ is indecomposable. By \cite{ButlerRingel} any band module is indecomposable and two band modules $M(b,m,\phi)$ and $M(b',m',\phi')$ are isomorphic if and only if $m=m'$ and one of the following holds:
\begin{itemize}
 \item There is an $i \in \Z$ with $b(j)=b'(i+j)$ for all $j \in \Z$ and $\phi$ and $\phi'$ are isomorphic as $k[x]$-modules.
 \item There is an $i \in \Z$ with $b(j)=b'(i-j)^{-1}$ for all $j \in \Z$ and $\phi^{-1}$ and $\phi'$ are isomorphic as $k[x]$-modules.
\end{itemize}
This motivates the definition of an equivalence relation $\sim$ for quasi-bands, defined by $(b,m) \sim (b',m')$ if $m=m'$ and one of the following holds:
\begin{itemize}
 \item There is an $i \in \Z$ with $b(j)=b'(i+j)$ for all $j \in Z$.
 \item There is an $i \in \Z$ with $b(j)=b'(i-j)^{-1}$ for all $j \in Z$.
\end{itemize}
By $[(b,m)]$ we denote the equivalence class of $(b,m)$ with respect to $\sim$.

It is shown in \cite{ButlerRingel} that the finite-dimensional indecomposable $\mathcal A$-modules  are precisely the string and band modules up to isomorphism.

For any sequence $S=(c_1,\ldots,c_l,(b_1,m_1),\cdots (b_n,m_n))$ with $l,n \geq 0$ consisting of strings $c_1,\ldots,c_l$ and quasi-bands $(b_1,m_1),\cdots,(b_m,m_n)$
the family of modules $\mathcal F(S) \subseteq  \mod(\mathcal A,d)$ is the image of the morphism
\[\GL_d(k) \times (k^*)^n \longrightarrow \mod(\mathcal A,d)\] sending $(g,\lambda_1,\ldots,\lambda_n)$ to
\[g \star (\bigoplus\limits_{i=1}^l M(c_i) \oplus \bigoplus \limits_{j=1}^n M(b_j,m_j,\lambda_j)),\] where \[d= \sum \limits_{i=1}^l \dim_k M(c_i) + \sum \limits_{j=1}^n \dim_k M(b_j,m_j,1).\]

We call a subset $\mathcal F$ of $\mod(\mathcal A,d)$ an $\mathcal S$-family of strings and quasi-bands if there is a sequence $S$ of strings and quasi-bands with $\mathcal F=\mathcal F(S)$ and we call $\mathcal F$ an $\mathcal S$-family of (strings and) bands if $S$ is a sequence of (strings and) bands.

Note that a band module $M(b,\phi)$ with $\phi \in \GL_p(k)=\Aut_k(k^p)$  does not necessarily belong to any $\mathcal S$-family of strings and quasi-bands, as $\phi$ might not be diagonalizable. But, as the set of diagonalizable matrices in $\GL_p(k)$ is dense in $\GL_p(k)$, we see that $M(b,\phi)$ belongs to the closure of the $\mathcal S$-family $\mathcal F(b,b,\ldots,b)$, which is an $\mathcal S$-family of bands. Thus $\mod( \mathcal A,d)$ is a union of closures of $\mathcal S$-families of strings and bands. As $\GL_d(k)$ is irreducible, any $\mathcal S$-family of strings and quasi-bands is irreducible and as there are only finitely many different $\mathcal S$-families of strings and bands in $\mod(\mathcal A,d)$, any irreducible component of $\mod(\mathcal A,d)$ is the closure of an $\mathcal S$-family of strings and bands.

Let $r:\mod(\mathcal A,d)\longrightarrow \N$ be the function sending $X$ to
\[
r(X) =\sum\limits_{\alpha \in Q_1}\rank X(\alpha).
\]
We call $X\in \mod(\mathcal A,d)$ regular if $r(X)=d$. 
From the direct decomposition of $X$ into a direct sum of string and band modules we obtain that $r(X) \leq d$ for any $X \in \mod(\mathcal A,d)$ and that $X$ is regular if and only if $X$ is isomorphic to a direct sum of band modules. As $r$ is lower semi-continuous, we see that the regular elements of $\mod(\mathcal A,d)$ form an open subset of $\mod(\mathcal A,d)$.

Let $\mathcal C$ be an irreducible component of $\mod(\mathcal A,d)$ such that there is a regular $X \in \mathcal C$. We call such an irreducible component regular. We already know that there is a sequence of strings and bands $S$ such that the closure of $\mathcal F(S)$ is $\mathcal C$. Obviously $S$ has to be a sequence of bands. In order to determine the regular irreducible components of $\mod(\mathcal A,d)$ it suffices to solve the following problem: Given a sequence of bands $S$ with $\mathcal F(S) \subseteq \mod(\mathcal A,d)$, determine whether the closure of $\mathcal F(S)$ is an irreducible component or not.

We apply the result on decompositions of irreducible components as presented in \cite{CBJS} and obtain the following: Let $S=(b_1,\ldots,b_n)$ be a sequence of bands. We set $d_i \defeq \dim_k M(b_i,1)$ and $d=d_1+\ldots+d_n$. The closure of $\mathcal F(S)$ is an irreducible component of $\mod(\mathcal A,d)$ if and only if the following holds:
\begin{itemize}
\item[i)] For all $i \neq j$, there are $X \in \mathcal F(b_i)$ and $Y \in \mathcal F(b_j)$ with  $\Ext^1_{\mathcal A}(X,Y)=0$.
\item[ii)] The closure of $\mathcal F(b_i)$ is an irreducible component of $\mod(\mathcal A,d_i)$ for $i=1,\ldots,n$.
\end{itemize}

Our goal is to characterize the conditions $i)$ and $ii)$ by combinatorial criteria on bands. For $i)$ we have a complete solution, whereas our characterization of $ii)$ only holds if the ideal $I$ is generated by paths of length two.
Note that the result from \cite{CBJS} can also be applied to sequences of strings and bands in order to determine the non-regular irreducible components, but we were not able to characterize condition $ii)$ for strings.

We call a pair of bands $((b,m),(c,n))$ extendable if there are $s,t \geq 1$, strings $u,v,w$ and arrows $\alpha,\beta,\gamma,\delta$ with
\[
(b,sm)=w\beta u \alpha^{-1} \text{ and } (c,tn)=w\delta^{-1} v\gamma
\]
 such that
\[(d,n+m)\defeq(c,n)(b,m) \defeq c(1)\cdots c(n) b(1) \cdots b(m)\] is a quasi-band. Note that $l(w)>m,n$ is possible, which explains why $s$ and $t$ are needed. We call a pair of equivalence classes of bands $(B,C)$ extendable, if there are bands $(b,m) \in B$ and $(c,n) \in C$ such that $((b,m),(c,n))$ is extendable.

\begin{prop} \label{prop1}Let $(b,m)$ and $(c,n)$ be bands. There are $X \in \mathcal F(b,m)$ and $Y \in \mathcal F(c,n)$ with $\Ext^1_{\mathcal A}(X,Y)=0$ if and only if the pair $([(b,m)],[(c,n)])$ is not extendable.
\end{prop}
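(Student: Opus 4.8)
The plan is to translate the vanishing of $\Ext^1$ into a combinatorial statement about the bi-infinite words underlying the two bands, using the combinatorial description of homomorphisms and extensions between string and band modules over a string algebra. First I would observe that, up to the $\GL_d(k)$-action, $\mathcal F(b,m)$ consists exactly of the modules $M(b,m,\lambda)$ with $\lambda\in k^*$, and likewise for $\mathcal F(c,n)$; since the modules $M(b,m,\lambda)\oplus M(c,n,\mu)$ form a flat family over the irreducible variety $(k^*)^2$, the function $(\lambda,\mu)\mapsto\dim_k\Ext^1_{\mathcal A}(M(b,m,\lambda),M(c,n,\mu))$ is upper semicontinuous. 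Hence there exist $X\in\mathcal F(b,m)$ and $Y\in\mathcal F(c,n)$ with $\Ext^1_{\mathcal A}(X,Y)=0$ if and only if $\Ext^1_{\mathcal A}(M(b,m,\lambda),M(c,n,\mu))=0$ for $(\lambda,\mu)$ in a dense open subset of $(k^*)^2$. So the proposition reduces to: the generic value of this $\Ext^1$ is zero exactly when $([(b,m)],[(c,n)])$ is not extendable.

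For the implication ``extendable $\Rightarrow$ generic $\Ext^1\neq 0$'' I would show that extendability forces $\Ext^1$ to be non-zero for every $\lambda,\mu$. Given witnesses $s,t\geq 1$, strings $u,v,w$ and arrows $\alpha,\beta,\gamma,\delta$ with $(b,sm)=w\beta u\alpha^{-1}$, $(c,tn)=w\delta^{-1}v\gamma$ and $(d,n+m)=(c,n)(b,m)$ a quasi-band, the shapes $w\beta\cdots\alpha^{-1}$ and $w\delta^{-1}\cdots\gamma$ say precisely that in the periodic strip of $d$ the two junctions $c(n)b(1)$ and $b(m)c(1)$ occur, one at a ``valley'' and one at a ``peak''; by the standard analysis of sub- and quotient string modules (see \cite{ButlerRingel}) this yields, for each $\lambda,\mu\in k^*$, a short exact sequence
\[
0\longrightarrow M(c,n,\mu)\longrightarrow M(d,n+m,\nu)\longrightarrow M(b,m,\lambda)\longrightarrow 0,
\]
with $\nu\in k^*$ determined by $\lambda,\mu$, the dimensions matching because $\dim_k M(d,n+m,1)=n+m=\dim_k M(c,n,1)+\dim_k M(b,m,1)$. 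The middle term is not isomorphic to $M(b,m,\lambda)\oplus M(c,n,\mu)$: if $[(b,m)]\neq[(c,n)]$ its indecomposable summands already fail to match, and if $[(b,m)]=[(c,n)]$ one checks that $(d,n+m)$ is in fact a band (a proper period would, by a Fine--Wilf-type argument, contradict minimality of the periods of $b$ and $c$), so $M(d,n+m,\nu)$ is indecomposable. Hence the sequence is non-split and $\Ext^1_{\mathcal A}(M(b,m,\lambda),M(c,n,\mu))\neq 0$ for all $\lambda,\mu$, which gives one direction of the proposition by contraposition.

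For the converse I would use the combinatorial basis of $\Ext^1_{\mathcal A}(M(b,m,\lambda),M(c,n,\mu))$: each basis element is attached to a finite common subword (an \emph{overlap}) of the bi-infinite words of $b$ and $c$, together with the ``divergence'' of the two words at the two ends of the overlap, and the class it represents is non-zero only when $\lambda$ and $\mu$ satisfy a relation read off from that overlap (a monomial relation, or $\lambda=\mu$ in the ``diagonal'' case $[(b,m)]=[(c,n)]$) --- with the sole exception of overlaps whose divergence is of ``arrow type'' at \emph{both} ends, which contribute for all $\lambda,\mu$. The crucial combinatorial step is to identify two-sided arrow-type overlaps with witnesses of extendability of $([(b,m)],[(c,n)])$: the overlap produces $w$, the four arrows $\alpha,\beta,\gamma,\delta$ and the integers $s,t$ (which absorb the fact that $w$ may run through several periods of $b$ or of $c$), and its two arrow-connections are exactly what makes $(c,n)(b,m)$ a quasi-band; conversely a witness of extendability is such an overlap. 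Therefore, if $([(b,m)],[(c,n)])$ is not extendable there is no two-sided arrow-type overlap, so every one of the finitely many $\Ext^1$-basis elements vanishes for $(\lambda,\mu)$ outside a proper closed subset of $(k^*)^2$; the generic $\Ext^1$ is then zero, producing the desired $X,Y$.

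I expect the heart of the argument --- and the main obstacle --- to be this identification of two-sided arrow-type overlaps with witnesses of extendability: one must verify that the junction data coming from such an overlap never forces a relation, so that $(c,n)(b,m)$ is genuinely a quasi-band, and conversely that the quasi-band condition in the definition of extendability reconstructs an honest overlap, with all the bookkeeping of the auxiliary integers $s$ and $t$ when $l(w)>m,n$. A second point needing care is the precise parameter-dependence of the remaining (one-sided and diagonal) extension classes, which comes from computing the pull-back/push-out of the corresponding ``string-level'' overlap extension along the quotient maps $M(b)\twoheadrightarrow M(b,m,\lambda)$ and $M(c)\twoheadrightarrow M(c,n,\mu)$.
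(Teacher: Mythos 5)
Your first direction (extendable $\Rightarrow$ $\Ext^1\neq 0$) follows the same route as the paper's Lemma~\ref{extenable->extneq0}: you build a short exact sequence with middle term $M(d,n+m,\nu)$ where $(d,n+m)=(c,n)(b,m)$. But the way you argue non-splitness has a gap. You appeal to indecomposability of $M(d,n+m,\nu)$, which requires $(d,n+m)$ to be a \emph{band}, not merely a quasi-band, and your ``Fine--Wilf'' sketch for the case $[(b,m)]=[(c,n)]$ does not obviously work; moreover, when $[(b,m)]\neq[(c,n)]$ the claim that ``the summands already fail to match'' still needs $(d,n+m)$ to be a band, or at least that none of its primitive roots coincide with $(b,m)$ or $(c,n)$, which you do not establish. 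The paper sidesteps this entirely by a homomorphism count: one has $\sharp\sub(w,(c,n))+\sharp\sub(w,(b,m))>\sharp\sub(w,(d,n+m))$ for the overlap string $w$, hence $[M(w),M(d,n+m,\nu)]\neq[M(w),M(c,n,\mu)\oplus M(b,m,\lambda)]$ by Proposition~\ref{string->band}, and the sequence cannot split. That argument is both correct and indifferent to whether $(d,n+m)$ is primitive.

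The converse (not extendable $\Rightarrow$ generic $\Ext^1=0$) is where the real gap lies. You assert a ``combinatorial basis of $\Ext^1$'' between band modules, stratified by overlaps and divergence types, with a precise parameter dependence — but you neither prove nor cite this, and the identification of ``two-sided arrow-type overlaps'' with extendability witnesses, which you yourself flag as ``the heart of the argument,'' is exactly what is missing. The paper's proof uses a different and much firmer tool: the Auslander--Reiten formula $\Ext^1_{\mathcal A}(N,M)\simeq\HomP_{\mathcal A}(\tau^{-1}M,N)$ together with the Butler--Ringel fact that band modules are $\tau$-invariant, so the $\Ext$-vanishing becomes the assertion that every homomorphism $M(c,n,\mu)\to M(b,m,\lambda)$ factors through a projective. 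That reduces to the Krause basis of $\Hom$ (Proposition~\ref{band->band}); for each basis morphism $\iota_{j,w,(b,m),\lambda}\circ\pi_{i,w,(c,n),\mu}$ one normalizes $i=n$, $j=m$, observes that non-extendability forces one of the words $c(1)\cdots c(n)b(1)\cdots b(m-1)$ or $b(1)\cdots b(m)c(1)\cdots c(n-1)$ to fail to be a string, hence a path through $I$ appears, and one writes down an explicit factorization through the indecomposable projective $P_{s(r)}$. This mechanism — $\tau$-invariance plus factoring through a projective — is what replaces your unproven $\Ext^1$-basis, and without it your converse direction is not a proof.
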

Note that $\Ext^1_{\mathcal A}(X,Y)=0$ for some $X \in \mathcal F(b,m)$ and $Y \in \mathcal F(c,n)$ implies that $\Ext^1_{\mathcal A}(-,-)$ vanishes generically on 
$\mathcal F(b,m)\times \mathcal F(c,n)$.

We call a band $(b,m)$ negligible if one of the following holds:
\begin{itemize}
\item There are strings $u,v,w,x,y$, arrows $\alpha,\beta,\gamma,\delta$ and an $s\geq 1$ with 
\[(b,m)=u\gamma v \alpha^{-1} \text{ and }
(b,sm)=w \beta x \alpha^{-1}=u \gamma w \delta^{-1} y\]
such that
\[(c,n) \defeq u\gamma \text{ and }  (d,m-n) \defeq v\alpha^{-1}\] are quasi-bands.

\item  There is a string $u$ that starts and ends with an arrow,  a string $v$ that starts and ends with an inverse arrow and a string $w$ with $(b,m)=wuw^{-1}v$ such that
\[
(c,m) \defeq w u^{-1} w^{-1} v
\]
is a quasi-band.

\end{itemize}

We call an equivalence class of bands $B$ negligible if there is a band $(b,m) \in B$ which is negligible. One can show that $(B,B)$ is extendable if $B$ is negligible, but we will not use it.

\begin{prop} \label{prop2} Let $(b,m)$ be a band with $\mathcal F(b,m) \subseteq \mod(\mathcal A,d)$. If the closure of $\mathcal F(b,m)$ is an irreducible component of $\mod(\mathcal A,d)$, then $[(b,m)]$ is not negligible.
\end{prop}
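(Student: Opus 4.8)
The plan is to argue by contraposition: assuming that $[(b,m)]$ is negligible, we will show that $\overline{\mathcal F(b,m)}$ is not maximal among the irreducible closed subsets of $\mod(\mathcal A,d)$, and hence is not an irreducible component. Since, as recalled in the introduction, $\mod(\mathcal A,d)$ is the union of the closures of its finitely many $\mathcal S$-families of strings and quasi-bands, it suffices to produce a sequence $S'$ of strings and quasi-bands with $\overline{\mathcal F(b,m)}\subsetneq\overline{\mathcal F(S')}$. The sequence $S'$ will be read off from the negligibility data. In the first case it comes from the pieces $(c,n)=u\gamma$ and $(d,m-n)=v\alpha^{-1}$ of $(b,m)=u\gamma v\alpha^{-1}$ together with the two presentations $w\beta x\alpha^{-1}=u\gamma w\delta^{-1}y$ of $(b,sm)$: these exhibit a second self-overlap of the walk $b$, incompatible with the first (the common part $w$ continues once with an arrow and once with an inverse arrow), and allow one to recombine the two pieces into a ``less twisted'' family. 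In the second case it comes from the band $(c,m)=wu^{-1}w^{-1}v$ obtained by reversing the distinguished sub-loop $u$ inside the fixed context $w(-)w^{-1}v$ (or, if $(c,m)$ is not itself a band, an equivalent sequence of bands). The extra hypotheses in the definition of negligibility --- that the relevant words be quasi-bands and that $u$ consist only of arrows and $v$ only of inverse arrows --- are precisely what guarantees that $S'$ is a legitimate sequence of strings and quasi-bands, with the dimension vector of $M(b,m,1)$.

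For the inclusion $\mathcal F(b,m)\subseteq\overline{\mathcal F(S')}$ we will use the explicit degeneration rules for string and quasi-band modules collected in Section~\ref{deg}: for every $\lambda\in k^{*}$ we exhibit a module of $\mathcal F(S')$ that degenerates to $M(b,m,\lambda)$. Since $\overline{\mathcal F(S')}$ is closed and $\GL_d(k)$-stable, it then contains every $g\star M(b,m,\lambda)$, hence all of $\mathcal F(b,m)$, and therefore $\overline{\mathcal F(b,m)}$. For strictness we compare generic modules. First note that $S'$ is necessarily a sequence of quasi-bands: the function $r$ is lower semi-continuous, $M(b,m,\lambda)$ is a specialization of a generic module $M$ of $\mathcal F(S')$, and $r(M(b,m,\lambda))=d$, so $r(M)=d$, i.e.\ $M$ is regular and hence a direct sum of band modules. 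A generic module of $\overline{\mathcal F(b,m)}$ is the indecomposable band module $M(b,m,\mu)$, while a generic module $M$ of $\overline{\mathcal F(S')}$ is either decomposable or a band module for a band not equivalent to $(b,m)$; in either case $M\not\cong M(b,m,\mu)$. As $\mathcal F(b,m)$ and $\mathcal F(S')$ are constructible, if $\overline{\mathcal F(b,m)}=\overline{\mathcal F(S')}$ then each would contain a dense open subset of this common irreducible variety, these opens would intersect, and we would obtain a module lying in both families --- a contradiction. Hence the inclusion is strict. (Alternatively one can separate the two closures by the dimension count $\dim\mathcal F(S)=d^{2}-\dim_{k}\End_{\mathcal A}(M)+r$, where $M$ is a generic module of $\mathcal F(S)$ and $r$ the number of quasi-band parameters of $S$.)

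The main obstacle is to verify that the degeneration runs in the correct direction, that is, that a generic module of $\mathcal F(S')$ lies \emph{strictly above} $M(b,m,\lambda)$ in the degeneration order and not below it. The same pieces can a priori be assembled into a family lying \emph{below} $\mathcal F(b,m)$: for instance, the naive splitting of $(b,m)$ into $(c,n)$ and $(d,m-n)$ only realises $M(b,m,\lambda)$ as an extension of $M(c,n,-)$ by $M(d,m-n,-)$, which gives the opposite inclusion $\overline{\mathcal F((c,n),(d,m-n))}\subseteq\overline{\mathcal F(b,m)}$. It is exactly the \emph{second} ingredient of each negligibility condition --- the second presentation of $(b,sm)$ in the first case, and the fact that $u$ starts and ends with an arrow while $v$ starts and ends with an inverse arrow in the second --- that forces the recombined family to dominate $M(b,m,\lambda)$. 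Carrying this out rigorously amounts to matching the two combinatorial conditions in the definition of negligibility against the degeneration dictionary of Section~\ref{deg}, and is where the bulk of the work lies; the remaining points (that $S'$ is a well-formed sequence of the right dimension vector, and the semicontinuity and dimension bookkeeping above) are routine.
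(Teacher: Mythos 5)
Your overall strategy is the same as the paper's: assume $[(b,m)]$ is negligible, read off from the negligibility data a dominating family ($S'=(u\gamma,\,v\alpha^{-1})$ in the first case, $S'=(wu^{-1}w^{-1}v)$ in the second), prove $\mathcal F(b,m)\subseteq\ov{\mathcal F(S')}$ with $\ov{\mathcal F(b,m)}\neq\ov{\mathcal F(S')}$, and contradict maximality; you also correctly spot that the delicate point is that the degeneration must run \emph{against} the naive Riedtmann direction. The difficulty is that the step you postpone as ``matching the negligibility conditions against the degeneration dictionary of Section~\ref{deg}'' is the entire mathematical content of the proposition, and there is no pre-existing dictionary to cite: Section~\ref{deg} of the paper consists precisely of proving these two degenerations from scratch. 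For the splitting case this is done by constructing, via winding and unwinding morphisms through the overlap $w$, an explicit non-$\mathcal A$-linear map $h\colon M(v\alpha^{-1},m-n,\mu)\to M(u\gamma,n,\nu)$ and the deformed module structure $X_{h,\mu,\nu}$ on $M(u\gamma,n,\nu)\oplus M(v\alpha^{-1},m-n,\mu)$, whose limit along the curve $\nu=-\lambda\mu^{-1}$ is $M(b,m,\lambda)$; for the reversal case (Proposition~\ref{deg1-1}) by an explicit $G$-equivariant morphism from a variety of representations of a three-vertex quiver into $\mod(\mathcal A,m)$. You supply neither construction nor any substitute, so the inclusion $\mathcal F(b,m)\subseteq\ov{\mathcal F(S')}$ --- which you yourself identify as the crux --- remains unproven.

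Moreover, the mechanism you sketch is too weak even in outline. You plan to ``exhibit a module of $\mathcal F(S')$ that degenerates to $M(b,m,\lambda)$'', i.e.\ a single orbit degeneration. In the splitting case such a module is a direct sum $M(u\gamma,n,\nu_0)\oplus M(v\alpha^{-1},m-n,\mu_0)$ with endomorphism algebra of dimension at least $2$, so an orbit degeneration to $M(b,m,\lambda)$ would force $\dim_k\End_{\mathcal A}(M(b,m,\lambda))\geq 3$, which the negligibility hypothesis does not provide. What is actually true, and what the paper proves, is a family degeneration in which the band parameters vary with the degeneration parameter ($Y_\mu\simeq M(u\gamma,n,-\lambda\mu^{-1})\oplus M(v\alpha^{-1},m-n,\mu)$, with $M(b,m,\lambda)$ only in the closure of the union of these orbits); the same remark applies to the reversal case. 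Finally, your strictness argument asserts in the second case, without proof, that $wu^{-1}w^{-1}v$ is not equivalent to $wuw^{-1}v$; the paper instead separates the closures by upper semicontinuity of the hom-functions $[M(a),-]$, starting from $\sharp\sub(w,(b,m))\neq\sharp\sub(w,(c,m))$. Until the two degenerations are actually constructed and the strictness claim justified, the argument is a correct plan rather than a proof.
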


We do not know whether the converse holds in general, but it does in case $I$ is generated paths of length two:

\begin{prop} \label{prop3} Assume that $I$ is generated by a set of paths of length two and let $(b,m)$ be a band with $\mathcal F(b,m) \subseteq \mod(\mathcal A,d)$. If $[(b,m)]$ is not negligible, then the closure of $\mathcal F(b,m)$ is an irreducible component of $\mod(\mathcal A,d)$.

\end{prop}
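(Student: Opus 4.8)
The plan is to combine the standard tangent‑space estimate for module varieties with an explicit combinatorial computation of $\Ext^1_{\mathcal A}(M(b,m,\lambda),M(b,m,\lambda))$ that becomes available once $I$ is generated by paths of length two.

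\emph{Step 1: reduction to a self‑extension count.} Recall the well‑known identification of the tangent space: for $X\in\mod(\mathcal A,d)$, a tangent vector at $X$ is a derivation $\mathcal A\to\End_k(X)$ (bimodule structure via $X$), the inner derivations forming the tangent space to the orbit $\mathcal O_X$, whence $\dim T_X\mod(\mathcal A,d)=\dim\mathcal O_X+\dim\Ext^1_{\mathcal A}(X,X)$. The parametrization $(g,\lambda)\mapsto g\star M(b,m,\lambda)$ is generically finite‑to‑one onto $\mathcal F(b,m)$ (at most two‑to‑one, by the isomorphism criterion for band modules), so $\dim\overline{\mathcal F(b,m)}=\dim\mathcal O_X+1$ for $X=M(b,m,\lambda)$, $\lambda$ generic; since this is strictly bigger than $\dim\mathcal O_X$, the direction of varying $\lambda$ is not tangent to $\mathcal O_X$ and thus yields a nonzero class, so $\dim\Ext^1_{\mathcal A}(X,X)\ge 1$ for generic $X\in\mathcal F(b,m)$. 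Now pick $X\in\mathcal F(b,m)$ that is simultaneously a smooth point of $\overline{\mathcal F(b,m)}$ and a point where $\dim\Ext^1_{\mathcal A}(X,X)$ takes its (by semicontinuity, minimal) generic value — both conditions hold on dense open subsets of the irreducible set $\mathcal F(b,m)$. If that generic value equals $1$, then for any irreducible component $\mathcal C\supseteq\overline{\mathcal F(b,m)}$ of $\mod(\mathcal A,d)$ we get
\[
\dim\mathcal O_X+1=\dim\overline{\mathcal F(b,m)}\le\dim\mathcal C\le\dim T_X\mathcal C\le\dim T_X\mod(\mathcal A,d)=\dim\mathcal O_X+1,
\]
forcing $\mathcal C=\overline{\mathcal F(b,m)}$. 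Hence it suffices to prove: if $[(b,m)]$ is not negligible then $\dim\Ext^1_{\mathcal A}(M(b,m,\lambda),M(b,m,\lambda))=1$ for generic $\lambda\in k^*$; equivalently, $\dim\Ext^1_{\mathcal A}(M(b,m,\lambda),M(b,m,\lambda))\ge 2$ generically implies $[(b,m)]$ is negligible.

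\emph{Step 2: a combinatorial model for the self‑extensions.} When $I$ is generated by paths of length two the (minimal) projective resolution of a string or quasi‑band module is again built from string‑like data, and the classical description of homomorphisms between string and band modules by graph maps (Butler--Ringel, Crawley‑Boevey) upgrades to a description of $\Ext^1$. I would make this precise by presenting $M(b,m,\lambda)$ as a pushout of string modules along the periodicity identification and reading off the extension classes; the upshot is that a basis of $\Ext^1_{\mathcal A}(M(b,m,\lambda),M(b,m,\lambda))$ is given by the admissible ways of overlapping the infinite periodic word $b$ with a second copy of itself — admissibility being a purely \emph{local} condition on overlapping pairs of consecutive letters, precisely because the relations have length two — together with the one canonical ``scalar'' class coming from the $k[x]$‑parameter. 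Unwinding the cyclic structure of such an overlap (this is exactly where the auxiliary powers $s,t$ in the definitions of \emph{extendable} and \emph{negligible} enter) shows that every non‑scalar overlap produces one of two configurations: either a decomposition $(b,sm)=w\beta x\alpha^{-1}$ together with a second reading $(b,sm)=u\gamma w\delta^{-1}y$ in which $u\gamma$ and $v\alpha^{-1}$ are quasi‑bands, or a ``folding'' $(b,m)=wuw^{-1}v$ with $u$ starting and ending with an arrow, $v$ starting and ending with an inverse arrow, and $wu^{-1}w^{-1}v$ a quasi‑band. These are exactly the two cases in the definition of a negligible band.

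\emph{Step 3: matching and conclusion.} Conversely, each of the two configurations in the definition of negligibility visibly produces an admissible self‑overlap of the corresponding type, hence a non‑scalar class in $\Ext^1_{\mathcal A}(M(b,m,\lambda),M(b,m,\lambda))$ for $\lambda$ in an open subset of $k^*$; and for generic $\lambda$ the scalar class is not in the span of the overlap classes. Therefore $\dim\Ext^1_{\mathcal A}(M(b,m,\lambda),M(b,m,\lambda))=1$ for generic $\lambda$ precisely when $(b,m)$ admits no admissible self‑overlap, i.e. precisely when $[(b,m)]$ is not negligible. Combined with Step 1 this gives the proposition. (Note that this also re‑derives Proposition \ref{prop2} in the length‑two case.)

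\emph{Expected main obstacle.} The real content is Step 2: setting up the graph‑map/overlap description of $\Ext^1$ for \emph{band} modules and establishing the exact dictionary between admissible self‑overlaps of the periodic word $b$ and the two negligibility patterns — in particular getting the cyclic bookkeeping right (so that the statement is about $(b,sm)$, not $(b,m)$) and checking that ``admissible overlap'' in the length‑two sense is equivalent to the quasi‑band conditions imposed on the pieces $u\gamma$, $v\alpha^{-1}$, and $wu^{-1}w^{-1}v$. One must also be careful with genericity in $\lambda$: self‑overlaps that occur only for special $\lambda$ (for instance those forced by internal symmetries of the band) lie over a proper closed subset of $k^*$ and must be discarded. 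This local‑combinatorial control is exactly what breaks down for longer relations, which is why the converse is only claimed under the length‑two hypothesis.
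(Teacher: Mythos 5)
Your Step 1 is a legitimate reduction and is genuinely different from the paper's argument: since the scheme-theoretic tangent space of the representation scheme at $X$ is the space of derivations $\mathcal A\to\End_k(X)$, one indeed gets $\dim T_X\mathcal C\le\dim\mathcal O_X+\dim\Ext^1_{\mathcal A}(X,X)$, and together with $\dim\overline{\mathcal F(b,m)}=\dim\mathcal O_X+1$ (constant orbit dimension along the $\lambda$-family, pairwise non-isomorphic band modules) this would force any component containing $\overline{\mathcal F(b,m)}$ to coincide with it, \emph{provided} the generic band module satisfies $\dim\Ext^1_{\mathcal A}(X,X)=1$. The paper never argues this way: it places $\mathcal F(b)$ inside a hypothetical larger regular component $\overline{\mathcal F(S)}$ and uses semicontinuity of $[M(c),-]$, $[-,M(c)]$ and of ranks, the separation result (Proposition \ref{separate}, via Lemma \ref{techlem}) and the combinatorial characterization of negligibility (Lemma \ref{charnegligible}) to manufacture arrows $\alpha,\beta,\gamma,\delta$ and a string $c$ witnessing negligibility, a contradiction; no tangent-space or self-extension computation appears.

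The gap is exactly where you flag it, and it is not a routine gap: the entire content of the proposition has been moved into the unproved assertion of Step 2, namely that $\Ext^1_{\mathcal A}(M(b,m,\lambda),M(b,m,\lambda))$ has a basis consisting of one ``scalar'' class plus classes indexed by admissible self-overlaps of $b$, and that non-scalar overlaps correspond precisely to the two negligibility configurations. This does not follow from the results you invoke: Krause's graph-map theorem (Propositions \ref{band->string}--\ref{band->band}) describes $\Hom$, not $\Ext^1$, and the paper's Proposition \ref{prop1} controls $\Ext^1(X,Y)$ only for the generic (non-isomorphic) pair in $\mathcal F(b)\times\mathcal F(b)$, which says nothing about the diagonal, where $\Ext^1(X,X)\ge 1$ always. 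A correct route to your needed claim would be via $\Ext^1(X,X)\cong D\underline{\Hom}(\tau^{-1}X,X)\cong D\underline{\End}(X)$ (using $\tau X\cong X$ for band modules), so one must show that for a non-negligible band every non-identity graph endomorphism --- which, by the argument in the proof of Proposition \ref{propdim}, necessarily passes through a simple $M(1_u)$ at a non-gentle vertex $u$, with a peak and a valley of $b$ at $u$ --- factors through a projective module (generically in $\lambda$). That statement is plausible, and it checks out in small examples, but it requires a genuine argument using the length-two hypothesis (an explicit lift through $P_u$, or an honest computation with the projective presentation of the band module); nothing in your proposal supplies it, and the ``pushout of string modules / overlap basis'' description of $\Ext^1$ for \emph{band} modules that you appeal to is itself a nontrivial unproved claim. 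Until that step is established, the proof is incomplete at its central point.
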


We call a sequence $S=(b_1,\ldots,b_n)$ of bands negligible, if one of the following holds:
\begin{itemize}
\item $[b_i]$ is negligible for some $1 \leq i \leq n$.
\item $([b_i],[b_j])$ is extendable for some $1 \leq i,j \leq n$.
\end{itemize}

Our main result is the following theorem which is just a consequence from the previous propositions.

\begin{mainthm} Let $\mathcal A=kQ/I$ be a string algebra and let $S$ be a sequence of bands with $\mathcal F(S) \subseteq \mod(\mathcal A,d)$.
\begin{itemize}
\item[a)] If the closure of $\mathcal F(S)$ is an irreducible component of $\mod(\mathcal A,d)$, then $S$ is negligible.
\item[b)] If $S$ is negligible and $I$ is generated by paths of length two, then the closure of $\mathcal F(S)$ is an irreducible component of $\mod(\mathcal A,d)$.

\end{itemize}

\end{mainthm}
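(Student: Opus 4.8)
The plan is to derive both parts of the theorem as purely formal consequences of the decomposition criterion of \cite{CBJS} recalled above, using Propositions~\ref{prop1}, \ref{prop2} and \ref{prop3} only to translate its conditions i) and ii) into the combinatorial language of extendable pairs and negligible bands. Throughout write $S=(b_1,\ldots,b_n)$, $d_i\defeq\dim_k M(b_i,1)$ and $d=d_1+\cdots+d_n$. The dictionary is: condition i), i.e.\ the existence for each $i\neq j$ of $X\in\mathcal F(b_i)$ and $Y\in\mathcal F(b_j)$ with $\Ext^1_{\mathcal A}(X,Y)=0$, is equivalent by Proposition~\ref{prop1} to non-extendability of the pair $([(b_i,m_i)],[(b_j,m_j)])$; and condition ii), i.e.\ each $\overline{\mathcal F(b_i)}$ being an irreducible component of $\mod(\mathcal A,d_i)$, implies non-negligibility of $[(b_i,m_i)]$ by Proposition~\ref{prop2} and, when $I$ is generated by paths of length two, is implied by it via Proposition~\ref{prop3}. (Here and below `negligible' in the statement of the theorem should be read as `not negligible', which is what these propositions dictate.)

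For a): assume $\overline{\mathcal F(S)}$ is an irreducible component of $\mod(\mathcal A,d)$, so conditions i) and ii) hold. Condition ii) with Proposition~\ref{prop2} gives that no $[(b_i,m_i)]$ is negligible; condition i) with Proposition~\ref{prop1} gives that $([(b_i,m_i)],[(b_j,m_j)])$ is not extendable for any $i\neq j$. To conclude that $S$ is not negligible it remains to treat the diagonal case, i.e.\ to see that $([(b_i,m_i)],[(b_i,m_i)])$ is not extendable either. For this I would prove the converse of the remark preceding Proposition~\ref{prop2}: if $(B,B)$ is extendable then $B$ is negligible; combined with the first point, this produces a contradiction. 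The proof of that converse is a direct string computation: from representatives $(b,m)\sim(b',m)$ together with data $s,t$, strings $u,v,w$ and arrows $\alpha,\beta,\gamma,\delta$ witnessing $(b,sm)=w\beta u\alpha^{-1}$, $(b',tm)=w\delta^{-1}v\gamma$ and the fact that $(b',m)(b,m)$ is a quasi-band, one reads off a presentation of $b$ of one of the two forms in the definition of a negligible band (the cyclic-shift case of $\sim$ yielding the first form, the reverse-and-invert case the second): the common prefix $w$ and the four turning letters supply the required arrows and strings, and the hypothesis that $(b',m)(b,m)$ is a quasi-band is exactly what forces the resulting pieces (for instance $u\gamma$ and $v\alpha^{-1}$) to be quasi-bands.

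For b): assume $S$ is not negligible and $I$ is generated by paths of length two. Then no $[(b_i,m_i)]$ is negligible, so Proposition~\ref{prop3} gives that each $\overline{\mathcal F(b_i)}$ is an irreducible component of $\mod(\mathcal A,d_i)$, which is condition ii). No pair $([(b_i,m_i)],[(b_j,m_j)])$ is extendable, so in particular for all $i\neq j$ Proposition~\ref{prop1} supplies $X\in\mathcal F(b_i)$ and $Y\in\mathcal F(b_j)$ with $\Ext^1_{\mathcal A}(X,Y)=0$, which is condition i). The criterion of \cite{CBJS} then yields that $\overline{\mathcal F(S)}$ is an irreducible component of $\mod(\mathcal A,d)$. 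Apart from routine bookkeeping, the only non-formal ingredient in the whole argument is the converse remark used in a); I expect the string-surgery there, in particular matching the reverse-and-invert case of $\sim$ against the $wuw^{-1}v$-type definition of a negligible band rather than the first form, to be the single step that genuinely requires care.
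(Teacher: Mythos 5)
Your overall reading is right: the theorem's ``negligible'' must be ``not negligible'', and the proof is the one the paper intends, namely translating conditions i) and ii) of the Crawley-Boevey--Schr\"oer criterion via Propositions~\ref{prop1}, \ref{prop2}, \ref{prop3}. Your derivation that no $[b_i]$ is negligible and no $([b_i],[b_j])$ with $i\neq j$ is extendable is exactly what the paper uses later (see the first two bullet points in the proof of Proposition~\ref{propdim}), and your treatment of part b) is fine.

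The problem is the extra step you insert for a), where you try to handle the diagonal pairs $([b_i],[b_i])$ by proving the converse of the remark before Proposition~\ref{prop2}, i.e.\ ``$(B,B)$ extendable $\Rightarrow$ $B$ negligible'', via a ``direct string computation''. That converse is \emph{false} without the path-length-two hypothesis, and the paper's own example after Corollary~\ref{cor} disproves it: for $\Lambda=k[\alpha,\beta]/(\alpha^3,\beta^3,\alpha\beta)$ and the band $\alpha^{-1}\beta$, the closure of $\mathcal F(\alpha^{-1}\beta)$ is an irreducible component of $\mod(\Lambda,2)$, so by Proposition~\ref{prop2} the class $[\alpha^{-1}\beta]$ is not negligible; yet $\Ext^1_\Lambda$ never vanishes on $\mathcal F(\alpha^{-1}\beta)\times\mathcal F(\alpha^{-1}\beta)$, so by Proposition~\ref{prop1} the pair $([\alpha^{-1}\beta],[\alpha^{-1}\beta])$ \emph{is} extendable. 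Concretely, your claim that ``the hypothesis that $(b',m)(b,m)$ is a quasi-band is exactly what forces the resulting pieces (for instance $u\gamma$ and $v\alpha^{-1}$) to be quasi-bands'' fails: being a quasi-band requires that \emph{all} windows of the periodic word are strings, and reassembled pieces such as $u\gamma$ can produce windows (e.g.\ $\beta\beta\beta$ or $\alpha^{-1}\beta^{-1}$ in the example) that hit longer relations of $I$ which $(b',m)(b,m)$ itself never exhibits; only when $I$ is generated by length-two paths does the check reduce to length-two junctions (compare the proofs of Lemmas~\ref{charnegligible} and \ref{charextendable}, which explicitly invoke that hypothesis). The correct resolution is simpler: the extendability clause in the definition of a negligible sequence is to be read with $i\neq j$ --- this is how the paper itself uses it in the proof of Proposition~\ref{propdim}, and the counterexample above shows it cannot include $i=j$ if part a) is to hold. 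With that reading, part a) follows immediately from what you already proved, and no diagonal case needs to be considered at all.
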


If $I$ is generated by a set of paths of length two and $b$ is a band, then $[b]$ is negligible if and only if $([b],[b])$ is extendable (see Lemma \ref{charnegligible} and \ref{charextendable}). 
From the previous theorem we thus obtain:

\begin{cor}\label{cor} Assume that $I$ is generated by paths of length two and let $\mathcal F \subseteq \mod(\mathcal A,d)$ be an $\mathcal S$-family of bands. The closure of $\mathcal F$ is an irreducible component of $\mod(\mathcal A,d)$ if and only if there are $X,Y \in \mathcal F$ with $\Ext^1_{\mathcal A}(X,Y)=0$.

\end{cor}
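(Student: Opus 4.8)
The plan is to read off the corollary from the decomposition criterion of \cite{CBJS} recalled above, together with Propositions \ref{prop1}, \ref{prop2} and \ref{prop3}; the only genuinely new ingredient is a passage from pointwise to generic $\Ext$-vanishing on a product of families. Write $\mathcal F=\mathcal F(S)$ with $S=(b_1,\dots,b_n)$ a sequence of bands, let $m_i$ be the period of $b_i$, put $d_i\defeq\dim_k M(b_i,m_i,1)$ and $d\defeq d_1+\dots+d_n$. Every point of $\mathcal F$ is isomorphic as an $\mathcal A$-module to $\bigoplus_{i=1}^n M(b_i,m_i,\lambda_i)$ for suitable $\lambda_i\in k^*$, so, since $\Ext^1_{\mathcal A}(-,-)$ is an isomorphism invariant which is additive in each variable, two points $X,Y\in\mathcal F$ with parameters $(\lambda_i)_i$ and $(\mu_j)_j$ satisfy
\[
\Ext^1_{\mathcal A}(X,Y)\cong\bigoplus_{i,j}\Ext^1_{\mathcal A}\big(M(b_i,m_i,\lambda_i),M(b_j,m_j,\mu_j)\big).
\]

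I would reduce the corollary to the claim that there exist $X,Y\in\mathcal F$ with $\Ext^1_{\mathcal A}(X,Y)=0$ if and only if $([b_i],[b_j])$ is not extendable for all $i,j\in\{1,\dots,n\}$ (including the case $i=j$). Granting the claim, the corollary follows from \cite{CBJS}: the closure of $\mathcal F(S)$ is an irreducible component exactly when conditions i) and ii) hold; by Proposition \ref{prop1}, condition i) says precisely that $([b_i],[b_j])$ is not extendable for all $i\neq j$; and Propositions \ref{prop2} and \ref{prop3} (the latter using that $I$ is generated by paths of length two) show that condition ii) is equivalent to $[b_i]$ being non-negligible for every $i$, hence, by Lemmas \ref{charnegligible} and \ref{charextendable}, to $([b_i],[b_i])$ being not extendable for every $i$. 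So i) and ii) together say exactly that $([b_i],[b_j])$ is not extendable for all $i,j$, which by the claim is equivalent to the existence of the desired $X,Y$.

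To prove the claim, the ``only if'' direction is immediate: given $X,Y\in\mathcal F$ with $\Ext^1_{\mathcal A}(X,Y)=0$, the decomposition above forces $\Ext^1_{\mathcal A}(M(b_i,m_i,\lambda_i),M(b_j,m_j,\mu_j))=0$ for all $i,j$, and then the ``only if'' part of Proposition \ref{prop1} rules out extendability of every pair $([b_i],[b_j])$. For the converse, assume no pair $([b_i],[b_j])$ is extendable; I would argue on the parameter space. Fix $i,j$ and set
\[
U_{ij}\defeq\{(\lambda,\mu)\in k^*\times k^*:\Ext^1_{\mathcal A}(M(b_i,m_i,\lambda),M(b_j,m_j,\mu))=0\}.
\]
Since $M(b_i,m_i,\lambda)\in\mathcal F(b_i)$ for every $\lambda\in k^*$, Proposition \ref{prop1} (cf. the remark following it) shows that $U_{ij}$ is nonempty; it is open by upper semicontinuity of $\dim_k\Ext^1_{\mathcal A}$ over the family parametrised by $k^*\times k^*$, hence dense, as $k^*\times k^*$ is irreducible. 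Writing $p_{ij}\colon(k^*)^n\times(k^*)^n\to k^*\times k^*$ for the projection onto the $(\lambda_i,\mu_j)$-coordinates, each $p_{ij}^{-1}(U_{ij})$ is a dense open subset of the irreducible variety $(k^*)^n\times(k^*)^n$, so the intersection of these finitely many subsets is nonempty; any point $\big((\lambda_i)_i,(\mu_j)_j\big)$ in it yields $X\defeq\bigoplus_i M(b_i,m_i,\lambda_i)$ and $Y\defeq\bigoplus_j M(b_j,m_j,\mu_j)$ in $\mathcal F$ with $\Ext^1_{\mathcal A}(X,Y)=0$, by the decomposition above, which establishes the claim.

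The one real obstacle is this last step. Proposition \ref{prop1} provides, for each ordered pair $(i,j)$, only \emph{some} representatives with vanishing $\Ext^1$, and a priori these choices need not be mutually compatible; what makes them compatible is the generic-vanishing strengthening of Proposition \ref{prop1} recorded in the text, together with the irreducibility of $(k^*)^n\times(k^*)^n$, which lets one avoid all the finitely many ``bad'' loci simultaneously. Everything else is a routine unwinding of the definitions of negligible and extendable and of the cited decomposition theorem.
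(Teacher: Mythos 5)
Your proof is correct and follows essentially the same route the paper intends: the paper presents the corollary as a direct consequence of the main theorem together with Lemmas \ref{charnegligible} and \ref{charextendable}, and it flags the one subtle point (passing from pairwise to simultaneous $\Ext$-vanishing) via the remark after Proposition \ref{prop1} that vanishing at one point of $\mathcal F(b,m)\times\mathcal F(c,n)$ implies generic vanishing. You have correctly identified this as the only non-bookkeeping step and supplied the standard argument (upper semicontinuity of $\dim\Ext^1$ along the $(k^*)^n\times(k^*)^n$ parametrisation, plus irreducibility to intersect the finitely many dense opens), which is exactly what the paper's remark is pointing at.
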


Note that Corollary \ref{cor} is not true if $I$ is not generated by paths of length two. Indeed, consider the algebra $\Lambda =k[\alpha,\beta]/( \alpha^3,\beta^3,\alpha\beta)$ and the band $\alpha^{-1}\beta$.
The closure of $\mathcal F(\alpha^{-1}\beta)$ is an irreducible component of $\mod(\Lambda,2)$, as there are no other $\mathcal S$-families of band modules in $\mod(\Lambda,2)$. On the other hand, $\Ext^1_{\Lambda}(X,Y)$ does not vanish for any $X,Y \in \mathcal F(\alpha ^{-1}\beta)$, as one can easily construct a short exact sequence $0 \longrightarrow X \longrightarrow Z \longrightarrow Y \longrightarrow 0$ for some $Z \in \mathcal F(\alpha^{-2}\beta^2)$.

If $I$ is generated by paths of length two, there is a simple formula for the dimension of a regular irreducible component:
We call a vertex $u \in Q_0$ gentle (w.r.t. $I$)  if it satisfies the following:
\begin{itemize}
\item For any arrow $\alpha$ of $Q$ with $s(\alpha)=u$ there is a most one arrow $\beta$ with $s(\alpha)=t(\beta)$ and $\alpha\beta \in I$.
\item For any arrow $\beta$ of $Q$ with $t(\beta)=u$ there is a most one arrow $\alpha$ with $s(\alpha)=t(\beta)$ and $\alpha\beta \in I$.
\end{itemize}
We call $\mathcal A$ a gentle algebra if  any vertex of $Q$ is gentle and the ideal $I$ is generated by paths of length two.

\begin{prop}\label{propdim} Assume that $I$ is generated by paths of length two and let $S$ be a sequence of bands such that the closure $\mathcal F(S)$ is an  irreducible component of $\mod(\mathcal A,d)$. The dimension of $\ov{\mathcal F(S)}$ is given by the formula
\[
\dim \ov{\mathcal F(S)} = d^2 -  \sum \limits_{\substack{u \in Q_0\\ u \text{ non-gentle}}}\dim_k \Hom_{\mathcal A} (X,M(1_u)) \dim_k\Hom_{\mathcal A}(M(1_u),X),
\]
for any $X \in \mathcal F(S)$. In particular, the dimension of any regular irreducible component of $\mod(\mathcal A,d)$ is $d^2$ provided $\mathcal A$ is a gentle algebra.

\end{prop}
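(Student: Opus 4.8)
The plan is to reduce the formula to a computation of the endomorphism algebra of a generic point of $\mathcal F(S)$, and then to carry that computation out combinatorially. Write $S=((b_1,m_1),\ldots,(b_n,m_n))$, put $d_i\defeq\dim_k M(b_i,m_i,1)$, and for $\lambda=(\lambda_1,\ldots,\lambda_n)\in(k^*)^n$ set $X_\lambda\defeq\bigoplus_{i=1}^n M(b_i,m_i,\lambda_i)$, so that $\mathcal F(S)$ is the image of $\psi\colon\GL_d(k)\times(k^*)^n\longrightarrow\mod(\mathcal A,d)$, $(g,\lambda)\mapsto g\star X_\lambda$. By the classification of band modules recalled above, the set $\{\mu\in(k^*)^n:X_\mu\cong X_\lambda\}$ is finite for every $\lambda$; hence, taking $\lambda$ generic so that $\dim_k\End_{\mathcal A}(X_\lambda)$ is minimal and $\GL_d(k)\cdot X_\lambda$ has the maximal possible dimension $d^2-\dim_k\End_{\mathcal A}(X_\lambda)$, the generic fibre of $\psi$ has dimension $\dim_k\End_{\mathcal A}(X_\lambda)$ and therefore
\[
\dim\ov{\mathcal F(S)}=d^2+n-\dim_k\End_{\mathcal A}(X_\lambda)\qquad\text{for generic }\lambda.
\]
Since the top and the socle of the band module $M(b_i,m_i,\lambda_i)$ do not depend on $\lambda_i$, the quantities $\dim_k\Hom_{\mathcal A}(X_\lambda,M(1_u))$ and $\dim_k\Hom_{\mathcal A}(M(1_u),X_\lambda)$, and hence the whole right-hand side of the claimed formula, are constant on $\mathcal F(S)$; so it suffices to prove that for generic $\lambda$
\[
\dim_k\End_{\mathcal A}(X_\lambda)=n+\sum_{\substack{u\in Q_0\\u\text{ non-gentle}}}\dim_k\Hom_{\mathcal A}(X_\lambda,M(1_u))\,\dim_k\Hom_{\mathcal A}(M(1_u),X_\lambda).
\]

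For the inequality ``$\geq$'', which holds for every $\lambda$: the $n$ scalar endomorphisms of the summands span an $n$-dimensional subspace of $\End_{\mathcal A}(X_\lambda)$, and for each non-gentle vertex $u$ the composition map $\Hom_{\mathcal A}(X_\lambda,M(1_u))\otimes_k\Hom_{\mathcal A}(M(1_u),X_\lambda)\to\End_{\mathcal A}(X_\lambda)$, $f\otimes g\mapsto g\circ f$, is injective, since it factors an epimorphism of $X_\lambda$ onto a semisimple quotient through a semisimple submodule and the $M(1_u)$ are pairwise non-isomorphic simples. The images of these maps for varying non-gentle $u$, together with the span of the scalars, form a direct sum, because each $g\circ f$ annihilates the radical of $X_\lambda$ whereas no nonzero scalar does. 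This exhibits inside $\End_{\mathcal A}(X_\lambda)$ a subspace of the dimension on the right-hand side.

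For ``$\leq$'' we use both that $I$ is generated by paths of length two and that $\ov{\mathcal F(S)}$ is an irreducible component. When $I$ is generated by paths of length two, strings and quasi-bands are controlled by their consecutive pairs of arrows, and one has the explicit ``graph map'' description of $\Hom_{\mathcal A}$ between string and band modules. Decomposing $\End_{\mathcal A}(X_\lambda)=\bigoplus_{i,j}\Hom_{\mathcal A}(M(b_i,m_i,\lambda_i),M(b_j,m_j,\lambda_j))$, one shows that for generic $\lambda$ every graph map occurring is either a scalar, which is possible only when $i=j$, or factors through a simple $M(1_u)$ at a non-gentle vertex $u$. Indeed, a graph map supported on a non-trivial common substring of cyclic shifts of $b_i$ and $b_j$ imposes a non-trivial polynomial condition on $(\lambda_i,\lambda_j)$ and hence vanishes on a dense open subset of $(k^*)^n$, while a graph map supported on a trivial substring $1_u$ is exactly one of the composites $g\circ f$ of the previous paragraph; and if such a $u$ were gentle, the shape of $b_i$ and $b_j$ near $u$ would force some $[(b_i,m_i)]$ to be negligible or some pair $([(b_i,m_i)],[(b_j,m_j)])$ to be extendable, which by Propositions \ref{prop1}, \ref{prop2} and \ref{prop3} and the decomposition criterion of \cite{CBJS} contradicts $\ov{\mathcal F(S)}$ being an irreducible component. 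Together with ``$\geq$'' this gives the displayed identity and hence the dimension formula.

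Finally, if $\mathcal A$ is gentle then every vertex of $Q$ is gentle, the correction term is an empty sum, and $\dim\ov{\mathcal F(S)}=d^2$; since every regular irreducible component of $\mod(\mathcal A,d)$ is the closure of an $\mathcal S$-family of bands, by the discussion in the introduction, every such component has dimension $d^2$. The main obstacle is the inequality ``$\leq$'': determining precisely which graph maps between two band modules survive a generic choice of the eigenvalue parameters and checking that the surviving non-scalar ones are exactly the maps through simples at non-gentle vertices. This is where the length-two condition on $I$ and the irreducible-component hypothesis are genuinely used; the reduction to the endomorphism computation and the inequality ``$\geq$'' are formal.
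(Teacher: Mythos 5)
Your overall strategy coincides with the paper's: write $\dim\ov{\mathcal F(S)}=d^2+n-\dim_k\End_{\mathcal A}(X)$ for a point $X$ whose band summands are pairwise non-isomorphic, and then identify the non-identity part of $\End_{\mathcal A}(X)$ with graph maps supported on trivial strings at non-gentle vertices. Your ``$\geq$'' argument (injectivity of the composition maps through the simples $M(1_u)$ and their independence from the summand identities) is fine, and your treatment of the trivial-string case via negligibility/extendability is exactly the paper's second application of Lemmas \ref{charnegligible} and \ref{charextendable}.

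The gap is in your ``$\leq$'' step, precisely at the point you yourself flag as the main obstacle. You claim that a graph map supported on a non-trivial common string of $b_i$ and $b_j$ ``imposes a non-trivial polynomial condition on $(\lambda_i,\lambda_j)$ and hence vanishes on a dense open subset of $(k^*)^n$.'' This is false: by Proposition \ref{band->band} (Krause), the composites $\iota_{j,d,(b_l),\lambda_l}\circ\pi_{i,d,(b_k),\lambda_k}$ are defined and nonzero for \emph{every} choice of parameters and form part of a basis of $\Hom_{\mathcal A}(M(b_k,\lambda_k),M(b_l,\lambda_l))$; as long as the two band modules are non-isomorphic, $\dim_k\Hom$ is a purely combinatorial quantity, constant in $(\lambda_k,\lambda_l)$, so no generic vanishing can occur and your upper bound does not follow. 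The correct way to exclude non-trivial strings $c$ is the same mechanism you use for trivial ones: if $i\in\fac(c,b_k)$ and $j\in\sub(c,b_l)$, there are arrows $\alpha,\beta,\gamma,\delta$ with $\parti(\alpha^{-1}c\beta,b_k)$ and $\parti(\gamma c\delta^{-1},b_l)$ non-empty; since $I$ is generated by paths of length two and $c$ is non-trivial, the words $\alpha^{-1}c\delta^{-1}$ and $\gamma c\beta$ are automatically strings, so Lemma \ref{charnegligible} (if $k=l$) or Lemma \ref{charextendable} (if $k\neq l$) makes $[b_k]$ negligible or $([b_k],[b_l])$ extendable, contradicting that $\ov{\mathcal F(S)}$ is an irreducible component. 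With that replacement (and no appeal to genericity of $\lambda$ beyond pairwise non-isomorphy of the summands), your proof closes and agrees with the paper's.
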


The paper is organized as follows. In section \ref{hom} we recall results on homomorphisms between representations of string algebras from \cite{Krause}. In section \ref{proofof3anddim} we prove  Proposition \ref{prop3} and the dimension formula Proposition \ref{propdim}. Section \ref{deg} is devoted to explicit inclusions among closures of $\mathcal S$-families of bands and the proof of Proposition \ref{prop2}. In section \ref{ext} we study extensions and prove Proposition \ref{prop1}.

\newpage
\section{Homomorphisms}\label{hom}

We show that $\mathcal S$-families of band modules can be separated by hom-conditions using string modules (see Proposition \ref{separate}), a result we need for the proof of Proposition \ref{prop3}. We recall a basis of homomorphism spaces between representations of string algebras worked out in \cite{Krause}.

\subsection{Substring morphisms for a string}
Let $c$ be a string. A substring of $c$ is a triple $(c_1,c_2,c_3)$ of strings with $c=c_1c_2c_3$ satisfying the following:
\begin{itemize}
\item $c_1$ is either trivial or it starts with an inverse arrow ($c_1=c_1'\alpha^{-1}$).
\item $c_3$ is either trivial or it ends with an arrow ($c_3=\alpha c_3'$).
\end{itemize}
By $\sub(c)$ we denote the set of substrings of $c$.

For each $(c_1,c_2,c_3) \in \sub(c)$ we define the homomorphism 
\[
\iota_{c_2,(c_1,c_2,c_3)}:M(c_2) \longrightarrow M(c)
\]
by sending $e_{d}$ to $e_{c_1d}$ for $d \in \Ld(c_2)$. We call such a morphism a substring morphism. For a string $d$ we set
\[
\sub(d,c) \defeq \{(c_1,c_2,c_3) \in \sub(c): c_2 \in \{d,d^{-1}\}\}.
\]

\subsection{Factorstring morphisms for a string}
Let $c$ be a string. A factorstring of $c$ is a triple $(c_1,c_2,c_3)$ of strings with $c=c_1c_2c_3$ satisfying the following:
\begin{itemize}
\item $c_1$ is either trivial or it starts with an arrow ($c_1=c_1'\alpha$).
\item $c_3$ is either trivial or it ends with an inverse arrow ($c_3=\alpha^{-1}c_3'$).
\end{itemize}
By $\fac(c)$ we denote the set of substrings of $c$.

For each $(c_1,c_2,c_3) \in \fac(c)$ we define the homomorphism 
\[
\pi_{c_2,(c_1,c_2,c_3)}:M(c) \longrightarrow M(c_2)
\]
by sending $e_{d}$ to 
\[
\begin{cases}
e_{d'} &\text{if } d=c_1d' \in \Ld(c_1c_2)\\
0 &\text{otherwise.}\\
\end{cases}
\]
We call such a morphism a factorstring morphism. For a string $d$ we set
\[
\fac(d,c) \defeq \{(c_1,c_2,c_3) \in \fac(c): c_2 \in \{d,d^{-1}\}\}.
\]

\subsection{Winding and unwinding morphisms}
Let $(b,m)$ be a quasi-band. For $s \geq 1$ and $\lambda \in k^*$ we define the winding morphism
\[
w_{(b,m),s,\lambda}: M(b,sm,\lambda^s) \longrightarrow M(b,m,\lambda)
\]
by sending $\ov{1 \otimes e_i}$ to $\ov{1\otimes e_i}$ for $i=0,\ldots,sm-1$.

Dually, the unwinding morphism
\[
u_{(b,m),s,\lambda}: M(b,m,\lambda) \longrightarrow M(b,sm,\lambda^s),
\]
sends $\ov{1\otimes e_i}$ to
\[
\sum\limits_{j=0}^{s-1} \lambda^j \ov{1\otimes e_{i+jm}}
\]
for $i=0,\ldots,m-1$.

\subsection{Substring morphisms for a quasi-band}\label{substringsofbands}
Let $(b,m)$ be a quasi-band and $c$ a string. We define the set
\begin{eqnarray*}
\sub_1(c,(b,m)) &\defeq& \{1 \leq i \leq m:  b(i) \cdots b(i+l(c))=b(i)c,\\
&&b(i)^{-1}, b(i+l(c)+1) \in Q_1\}
\end{eqnarray*}
Note that we write $b(i)\cdots b(i+l(c))=b(i)c$ instead of $b(i+1)\cdots b(i+l(c))=c$ in order to include the case $l(c)=0$.
For any $i \in \sub_1(c,(b,m))$ and any $\lambda \in k^*$ we define a morphism 
\[
\iota_{i,c,(b,m),\lambda}: M(c) \longrightarrow M(b,m,\lambda)
\]
as a composition
\[
\begin{xy}
\xymatrix{
M(c) \ar[r]^/-0.8em/f &M(b,sm,\lambda^s) \ar[rr]^{w_{(b,m),s,\lambda}}&& M(b,m,\lambda)
}
\end{xy}
\]
for some integer $s\geq 1$ chosen in such a way that  $(b,sm)=d_1 \alpha^{-1} c \beta d_2$ for some arrows $\alpha,\beta$ and some strings $d_1,d_2$ with $l(d_1)=i-1$, where the morphism $f$ sends $e_d$ to $\ov{1\otimes e_{i+l(d)}}$ for $d \in \Ld(c)$.

\[
\begin{xy}
\xymatrix{
M(c): & e_{1_{t(c)}} \ar@{-}[rrr]^{c}\ar@{.>}[d]&\ar@{.>}[d]&\ar@{.>}[d] & e_{c}\ar@{.>}[d]\\
&\ov{1\otimes e_{i}}\ar@{-}[rrr]^{c}& && \ov{1\otimes e_{i+l(c)}}\\
M(b,sm,\lambda^s):\\
&\ov{1\otimes e_{i-1-sm}} \ar@{-}[rrr]^{d_2d_1}\ar[uu]^{\alpha=\lambda^{-s}} &&& \ov{1\otimes e_{i+l(c)+1}}\ar[uu]_{\beta}\\
}
\end{xy}
\]
Note that $ \iota_{i,c,(b,m),\lambda}$ does not depend on the choice of $s$.

We set $\sub_{-1}(c,(b,m)) \defeq \sub(c^{-1},(b,m))$. Note that $\sub_{-1}(c,(b,m))=\sub_{1}(c,(b,m))$ if $c$ is trivial. For each $i \in \sub_{-1}(c,(b,m))$ we define a morphism
\[
\iota_{i,c,(b,m),\lambda}: M(c) \longrightarrow M(b,m,\lambda).
\]
as the composition
\[
\begin{xy}
\xymatrix{
M(c) \ar[r]^/-0.4 em/{\sim} &M(c^{-1}) \ar[rr]^/-0.4 em/{\iota_{i,c^{-1},(b,m),\lambda}}&& M(b,m,\lambda),
}
\end{xy}
\]
where the first morphism is the isomorphism from $M(c)$ to $M(c^{-1})$.

We have thus defined morphisms $\iota_{i,c,(b,m),\lambda}$, called substring morphisms, for any $\lambda \in k^*$ and any 
\[
i \in \sub(c,(b,m)) \defeq \sub_{1}(c,(b,m))\cup \sub_{-1}(c,(b,m)).
\]
Whereas substring morphisms for strings are always injective, substring morphisms for quasi-bands are not necessarily.

Dually we define the factorstring morphisms for quasi-bands: 

\subsection{Factorstring morphisms for a quasi-band}

Let $(b,m)$ be a quasi-band and $c$ a string. We define the set
\begin{eqnarray*}
\fac_1(c,(b,m)) &\defeq& \{1 \leq i \leq m: b(i) \cdots b(i+l(c))=b(i)c,\\
&& b(i), b(i+l(c)+1)^{-1} \in Q_1 \}
\end{eqnarray*}
For any $i \in \fac_1(c,(b,m))$ and any $\lambda \in k^*$ we define a morphism 
\[
\pi_{i,c,(b,m),\lambda}: M(b,m,\lambda) \longrightarrow M(c)
\]
as a composition
\[
\begin{xy}
\xymatrix{
M(b,m,\lambda) \ar[rr]^{u_{(b,m),s,\lambda}}&&M(b,sm,\lambda^s) \ar[r]^/0.8em/{f}& M(c)
}
\end{xy}
\]
for some integer $s\geq 1$ chosen in such a way that  $(b,sm)=d_1 \alpha c \beta^{-1} d_2$ for some arrows $\alpha,\beta$ and some strings $d_1,d_2$ with $l(d_1)=i-1$, where $f$ is the morphism that sends $\ov{1\otimes e_{j}}$ to $0$ if either $0 \leq j<i$ or $i+l(c) <j<sm$, and $\ov{1\otimes e_j}$ to
$e_{d}$ if $i \leq j \leq i +l(c)$, where $d$ is the leftdivisor of $c$ of length $j-i$.

We set $\fac_{-1}(c,(b,m)) \defeq \fac(c^{-1},(b,m))$.
For each $i \in \fac_{-1}(c,(b,m))$ we obtain a morphism 
\[
\pi_{i,c,(b,m),\lambda}: M(b,m,\lambda) \longrightarrow M(c)
\]
by identifying $M(c)$ and $M(c^{-1})$ just as above.

We have thus defined morphisms $\pi_{i,c,(b,m),\lambda}$, called factorstring morphisms, for any $\lambda \in k^*$ and any 
\[
i \in \fac(c,(b,m)) \defeq \fac_{1}(c,(b,m))\cup \fac_{-1}(c,(b,m)).
\]

\subsection{Morphisms between string and band modules}

In this section we will frequently use the abbreviation
\[
[X,Y]\defeq \dim_k \Hom_{\mathcal A}(X,Y)
\]
for $\mathcal A$-modules $X,Y$.
The following three propositions are reformulations of results from \cite{Krause}.

\begin{prop}\label{band->string} Let $M(b,m,\lambda)$ be a band module and $M(c)$ a string module. The morphisms
\[
\iota_{d,x} \circ \pi_{i,d,(b,m),\lambda}:M(b,m,\lambda) \longrightarrow M(c), 
\]
where $d$ is a string of length at most $l(c)$, $x \in \sub(d,c)$ and $i \in \fac(d,(b,m))$, form a basis of $\Hom_{\mathcal A}(M(b,m,\lambda),M(c))$. In particular,
\[
[M(b,m,\lambda),M(c)] = \sum \limits_{d \in \mathcal W, l(d) \leq l(c)} \sharp \fac(d,(b,m)) \sharp \sub(d,c).
\]

\end{prop}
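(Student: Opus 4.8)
Proposition \ref{band->string} is the transcription, into the language of the substring/factorstring and winding/unwinding morphisms set up above, of Krause's description of homomorphism spaces between string and band modules over a string algebra (\cite{Krause}); the plan is to make this transcription explicit. Recall the shape of Krause's result: for string or band modules $M,N$, a basis of $\Hom_{\mathcal A}(M,N)$ is given by \emph{graph maps}, each attached to a string $d$ together with a prescribed way in which $d$ occurs as a ``factor walk'' of $M$ and as a ``sub walk'' of $N$, the graph map being the composite of the canonical factor map $M\to M(d)$ with the canonical sub injection $M(d)\to N$ determined by these two occurrences. It is precisely in order to realize these canonical maps that the substring morphisms (for strings) and the factorstring morphisms (for quasi-bands) were introduced above. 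So the plan is: (1) check that for $N=M(c)$ a string module the sub-walk occurrences of $d$ are exactly the elements of $\sub(d,c)$, with associated injection the substring morphism $\iota_{d,x}$ (read through the isomorphism $M(d)\cong M(d^{-1})$ when the middle component of $x$ is $d^{-1}$); (2) check that for $M=M(b,m,\lambda)$ a band module the factor-walk occurrences of $d$ are exactly the elements of $\fac(d,(b,m))$, with associated map the factorstring morphism $\pi_{i,d,(b,m),\lambda}$; (3) conclude that the maps $\iota_{d,x}\circ\pi_{i,d,(b,m),\lambda}$ are exactly Krause's graph maps for $\Hom_{\mathcal A}(M(b,m,\lambda),M(c))$, so that the basis statement and the dimension formula follow from \cite{Krause}.

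The content of the argument lies in making the dictionary in (1) and (2) faithful. On the combinatorial side one verifies that the boundary conditions in the definitions of $\sub_1$ and $\fac_1$ --- that the letters $b(i)$ and $b(i+l(d)+1)$ at the two ends of the window lie in $Q_1$, resp.\ $Q_1^{-1}$, together with the analogous conditions defining $\sub(c)$ and $\fac(c)$ --- are exactly Krause's admissibility conditions for $d$ to cut out a factor walk, resp.\ a sub walk; here the periodic nature of the band module's underlying walk is what makes its factor occurrences be parametrized by a residue $i$ modulo $m$, and the case $l(d)\ge m$ is accommodated by passing to the $sm$-fold cover. On the side of the morphisms one checks that $\pi_{i,d,(b,m),\lambda}=f\circ u_{(b,m),s,\lambda}$ is the canonical factor map attached to $i$: the unwinding morphism $u_{(b,m),s,\lambda}$ passes to the cover $M(b,sm,\lambda^s)$, on which $d$ occurs as an honest factor walk and $f$ is the evident projection onto $M(d)$, so $f$ is Krause's canonical map on the cover; and one checks that the composite does not depend on the admissible choice of $s$ --- the dual of the remark made above for the substring morphisms $\iota_{i,c,(b,m),\lambda}$. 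Tracking how the scalar $\lambda$ propagates through the winding and unwinding maps is the fiddly point; but since a graph map is only determined up to a nonzero scalar, and rescaling does not affect the basis property, it is enough to land on Krause's map up to such a scalar.

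Finally, because the substring morphisms for strings $\iota_{c_2,(c_1,c_2,c_3)}\colon M(c_2)\to M(c)$ are injective, a composite $\iota_{d,x}\circ\pi_{i,d,(b,m),\lambda}$ is nonzero only if $\dim_k M(d)=l(d)+1\le l(c)+1=\dim_k M(c)$, i.e.\ $l(d)\le l(c)$; hence only the finitely many strings $d$ with $l(d)\le l(c)$ contribute, which is why the sum in the dimension formula is finite. Linear independence and spanning of the family then follow verbatim from \cite{Krause} (alternatively one can argue directly, using that submodules and quotients of string modules are again direct sums of string modules and peeling off the contribution of one $M(d)$ at a time), and the displayed formula is obtained by counting the indexing data: for each $d$ there are $\sharp\fac(d,(b,m))\cdot\sharp\sub(d,c)$ triples $(d,x,i)$. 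The main obstacle is exactly the faithful matching of Krause's abstract graph-map combinatorics with the concrete winding/unwinding morphisms --- in particular the independence of the auxiliary $s$ and the correct propagation of $\lambda$ --- together with confirming that no linear relations among the resulting maps arise beyond the $d\leftrightarrow d^{-1}$ identification already built into the notation.
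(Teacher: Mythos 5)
Your proposal is correct and matches the paper's approach: the paper offers no independent proof but states that Propositions \ref{band->string}--\ref{band->band} are reformulations of Krause's graph-map basis theorem, which is exactly the dictionary you spell out. Your extra verification of the $\sub/\fac$ boundary conditions, the independence of the auxiliary $s$, and the bound $l(d)\le l(c)$ (which in fact already follows from $\sub(d,c)\neq\emptyset$) just makes explicit what the paper leaves to the citation of \cite{Krause}.
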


\begin{prop}\label{string->band} Let $M(b,m,\lambda)$ be a band module and $M(c)$ a string module. The morphisms
\[
\iota_{i,d,(b,m),\lambda}\circ \pi_{d,x}:M(c)\longrightarrow M(b,m,\lambda), 
\]
where $d$ is a string of length at most $l(c)$, $x \in \fac(d,c)$ and $i \in \sub(d,(b,m))$, form a basis of $\Hom_{\mathcal A}(M(c),M(b,m,\lambda))$. In particular,
\[
[M(c),M(b,m,\lambda)] = \sum \limits_{d \in \mathcal W, l(d) \leq l(c)}\sharp \fac(d,c) \sharp \sub(d,(b,m)) .
\]
\end{prop}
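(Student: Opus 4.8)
\section*{Proof proposal for Proposition \ref{string->band}}

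This proposition is the mirror image of Proposition \ref{band->string}: it computes $\Hom_{\mathcal A}(M(c),M(b,m,\lambda))$, which under the standard $k$-duality becomes a $\Hom$-space of exactly the shape treated there, but over the opposite algebra. My plan is therefore to deduce it from Proposition \ref{band->string} applied to $\mathcal A^{\op}$; this uses only a result already available in the excerpt, although the same statement can equally be read off from \cite{Krause} by the analogous translation of notation. (The existence of such a duality proof is anyway implicit in the paper's remark that one ``dually'' defines the factorstring morphisms for quasi-bands.)

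First I would record that $D \defeq \Hom_k(-,k)$ is a duality between the categories of finite-dimensional left $\mathcal A$- and left $\mathcal A^{\op}$-modules, that $\mathcal A^{\op} = kQ^{\op}/I^{\op}$ is again a string algebra whose ideal is still generated by paths, and that $D$ carries string and band modules to string and band modules: viewing a string $c$ of $\mathcal A$ as a walk in $Q^{\op}$ yields a string $\bar c$ of $\mathcal A^{\op}$ with $DM(c)\cong M(\bar c)$ and $l(\bar c)=l(c)$, and viewing a quasi-band $(b,m)$ the same way yields a quasi-band $(\bar b,m)$ with $DM(b,m,\lambda)\cong M(\bar b,m,\lambda^{-1})$ for $\lambda\in k^*$. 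I would then pin down the combinatorial dictionary induced by these identifications, namely natural bijections
\[
\fac(d,c)\ \xrightarrow{\ \sim\ }\ \sub(\bar d,\bar c)\qquad\text{and}\qquad \sub(d,(b,m))\ \xrightarrow{\ \sim\ }\ \fac(\bar d,(\bar b,m)),
\]
using that the ``starts with / ends with'' conditions defining $\fac(-)$ are obtained from those defining $\sub(-)$ by interchanging direct and inverse letters (precisely the effect of passing to $Q^{\op}$), and that the arrow conditions singling out $\sub_{\pm1}(-,(b,m))$ turn into those singling out $\fac_{\mp1}(-,(\bar b,m))$.

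Next I would check that $D$ sends the two kinds of building-block morphisms into one another: a factorstring morphism $\pi_{d,x}\colon M(c)\to M(d)$ is sent to a substring morphism $M(\bar d)\to M(\bar c)$ in the class indexed by the image of $x$, and a substring morphism $\iota_{i,d,(b,m),\lambda}\colon M(d)\to M(b,m,\lambda)$ is sent to a factorstring morphism $M(\bar b,m,\lambda^{-1})\to M(\bar d)$ indexed by the image of $i$. Consequently $D$ maps each proposed morphism $\iota_{i,d,(b,m),\lambda}\circ\pi_{d,x}$ to exactly one of the basis morphisms $\iota_{\bar d,\bar x}\circ\pi_{i',\bar d,(\bar b,m),\lambda^{-1}}$ that Proposition \ref{band->string}, applied to $\mathcal A^{\op}$, declares to be a basis of $\Hom_{\mathcal A^{\op}}(M(\bar b,m,\lambda^{-1}),M(\bar c)) \cong D\,\Hom_{\mathcal A}(M(c),M(b,m,\lambda))$. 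Since $D$ is an equivalence, the proposed morphisms form a basis of $\Hom_{\mathcal A}(M(c),M(b,m,\lambda))$; transporting the summation formula of Proposition \ref{band->string} through the bijections above and using $l(\bar d)=l(d)$, $l(\bar c)=l(c)$ then gives the stated value of $[M(c),M(b,m,\lambda)]$.

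I expect the bookkeeping under $D$ to be the only real obstacle. The delicate points are: tracking the scalars hidden in the winding and unwinding morphisms (the factor $\lambda^{-s}$ in the definition of $\iota_{i,c,(b,m),\lambda}$ and the coefficients $\lambda^{j}$ in $u_{(b,m),s,\lambda}$) so that $D$ carries a basis morphism to an honest basis morphism rather than merely to a nonzero multiple of one; handling the quasi-band rather than band case, where one may first have to pass to a power $(b,sm)$ and then invoke the independence of $\iota_{i,c,(b,m),\lambda}$ of that choice; and checking that the two conventions for identifying $M(c)$ with $M(c^{-1})$ — used to define the $\sub_{-1}$- and $\fac_{-1}$-indexed morphisms — are compatible with $D$. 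None of these should obstruct the argument, but each has to be spelled out with care; once the dictionary of Step two is fixed, everything else is formal.
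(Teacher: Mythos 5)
The paper offers no proof of this proposition to compare against: Propositions \ref{band->string}--\ref{band->band} are explicitly stated as reformulations of results from \cite{Krause}, so the paper's ``proof'' is a citation. Your route is therefore genuinely different and, in outline, correct: dualize with $D=\Hom_k(-,k)$, note that $\mathcal A^{\op}$ is again a string algebra whose ideal is generated by paths, that $D$ carries string modules to string modules and band modules to band modules of the same length/period, and that the induced dictionary interchanges $\fac(d,c)\leftrightarrow\sub(\bar d,\bar c)$ and $\sub(d,(b,m))\leftrightarrow\fac(\bar d,(\bar b,m))$ (because passing to $Q^{\op}$ swaps arrows and inverse arrows, hence swaps the defining conditions of the two notions); then Proposition \ref{band->string} applied over $\mathcal A^{\op}$ gives the basis and the count, since the sub/fac cardinalities are insensitive to the parameter of the band module. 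What this buys is economy: only one of Krause's two basis descriptions needs to be imported, the other follows formally; what it costs is exactly the bookkeeping you list (identification of $DM(b,m,\lambda)$ with $M(\bar b,m,\lambda^{\pm1})$, the fact that $D$ turns winding into unwinding morphisms and substring into factorstring morphisms, compatibility with the $M(d)\cong M(d^{-1})$ identifications used for the $\sub_{-1}$ and $\fac_{-1}$ indices). One simplification you can allow yourself: the scalar-tracking you flag as delicate is not needed for the conclusion. Since $D$ induces a $k$-linear isomorphism $\Hom_{\mathcal A}(M(c),M(b,m,\lambda))\cong\Hom_{\mathcal A^{\op}}(DM(b,m,\lambda),DM(c))$, it suffices that your composites are sent, bijectively on index triples, to \emph{nonzero scalar multiples} of the basis morphisms of Proposition \ref{band->string}; nonzero scalars affect neither linear independence nor the dimension formula, which does not involve $\lambda$ at all.
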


\begin{prop}\label{band->band} Let $M(b,m,\lambda)$ and $M(c,n,\mu)$ be band modules. The morphisms
\[
\iota_{j,d,(c,n),\mu} \circ \pi_{i,d,(b,m),\lambda}: M(b,m,\lambda) \longrightarrow M(c,n,\mu),
\]
where $d$ is a string, $j \in \sub(d,(c,n))$ and $i \in \fac(d,(b,m))$ (together with an isomorphism in case $M(b,m,\lambda)$ and $M(c,n,\mu)$ are isomorphic) form a basis of $\Hom_{\mathcal A}(M(b,m,\lambda),M(c,n,\mu))$.
\end{prop}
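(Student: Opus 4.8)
The plan is to derive the statement from the graph‑map basis of $\Hom$‑spaces between string and band modules established in \cite{Krause}, by matching that basis with the morphisms $\iota_{j,d,(c,n),\mu}\circ\pi_{i,d,(b,m),\lambda}$ defined above. Recall that in \cite{Krause} a basis of $\Hom_{\mathcal A}(M,N)$, for $M$ and $N$ string or band modules, is indexed by \emph{graph maps}: each such map is given by a string $d$ together with a realisation of $M(d)$ as a quotient of $M$ and as a submodule of $N$, subject to a boundary condition at the two ends of $d$ ruling out a factorisation through a shorter graph map (and hence also ruling out that the map is zero); when $M\cong N$ the identity is added as an extra generator. First I would spell out this description in the band‑to‑band case: unwinding $M(b,m,\lambda)$ to $M(b,sm,\lambda^s)$ and $M(c,n,\mu)$ to $M(c,tn,\mu^t)$ for $s,t$ large, a graph map corresponds to an occurrence of $d$ (or $d^{-1}$) inside the periodic word $b$ that is bounded by an arrow on the left and by an inverse arrow on the right --- precisely the condition cutting out $\fac_1(d,(b,m))$ (resp.\ $\fac_{-1}$) --- together with an occurrence of $d$ (or $d^{-1}$) inside $c$ bounded by an inverse arrow on the left and an arrow on the right, i.e.\ an element of $\sub_1(d,(c,n))$ (resp.\ $\sub_{-1}$).

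Next I would check that the concretely defined maps here are exactly these graph maps. The factorstring morphism $\pi_{i,d,(b,m),\lambda}$ is by construction ``unwind, then project onto the copy of $M(d)$ occupying positions $i,\dots,i+l(d)$''; one verifies directly from the formula for $u_{(b,m),s,\lambda}$ that this is independent of the auxiliary $s$, that it is a surjection $M(b,m,\lambda)\twoheadrightarrow M(d)$, and that it is the canonical quotient map attached to the given factorstring occurrence. Dually, $\iota_{j,d,(c,n),\mu}$ is the canonical map $M(d)\to M(c,n,\mu)$ attached to a substring occurrence, built from $w_{(c,n),t,\mu}$; its well‑definedness and non‑triviality use the boundary conditions defining $\sub(d,(c,n))$ and the bookkeeping of the scalar $\mu^t$. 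The genuinely delicate point is this interplay with the automorphisms $\lambda,\mu$: one must confirm that the composite does not depend on the chosen $s,t$ and is nonzero, which is exactly where the ``arrow/inverse‑arrow'' boundary conditions and the powers $\lambda^s,\mu^t$ enter. Granting this, $\iota_{j,d,(c,n),\mu}\circ\pi_{i,d,(b,m),\lambda}$ is a graph map, and conversely every graph map is of this form with $d$ ranging over $\mathcal W$ and $(i,j)$ over $\fac(d,(b,m))\times\sub(d,(c,n))$.

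Finally, linear independence and spanning would be inherited from \cite{Krause}: spanning is the functorial‑filtration statement that every homomorphism between string/band modules is a sum of graph maps, and linear independence follows by evaluating on the basis $\{\ov{1\otimes e_i}\}$ of $M(b,m,\lambda)$ and comparing, for each basis vector, the leading basis vectors of $M(c,n,\mu)$ in whose span its image lies, since distinct triples $(d,i,j)$ contribute independently. The extra isomorphism generator is needed precisely when $M(b,m,\lambda)\cong M(c,n,\mu)$ (equivalently, by the classification of band modules recalled from \cite{ButlerRingel}, when $(b,m)\sim(c,n)$ and the associated $k[x]$‑modules match): being an isomorphism, it is not a composite $\iota\circ\pi$ through any $M(d)$ with $d$ a proper common factor/substring, and adjoining it yields a basis. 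The hard part will be the second paragraph --- the purely computational verification that the winding/unwinding composites coincide with Krause's abstract graph maps, including independence of $s,t$ and the correct tracking of $\lambda^s$ and $\mu^t$ --- together with a careful treatment of the degenerate cases ($d$ trivial, and the isomorphism correction term) so that the index set is neither over‑ nor under‑counted.
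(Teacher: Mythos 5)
Your proposal follows exactly the route the paper takes: the paper offers no independent proof of this proposition, stating only that it (like Propositions \ref{band->string} and \ref{string->band}) is a reformulation of Krause's graph-map basis theorem, and your plan --- identify the winding/unwinding composites $\iota_{j,d,(c,n),\mu}\circ\pi_{i,d,(b,m),\lambda}$ with Krause's graph maps, check independence of the auxiliary $s,t$ and the scalar bookkeeping, and add the isomorphism when $(b,m)\sim(c,n)$ with matching $k[x]$-structure --- is precisely that translation made explicit. The only point to watch is the indexing convention (a string $d$ and its inverse $d^{-1}$ give the same composite, so $d$ should effectively range over strings up to inversion), which you implicitly flag in your remark about not over-counting.
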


\noindent
As an example, we present a result which we will need in section \ref{ext}.
\begin{lem}\label{?->injective} Let $M(b,m,\lambda)$ and $M(c,n,\mu)$ be band modules, $d$ a string,  $j \in \sub(d,(c,n))$ and $i \in \fac(d,(b,m))$.
The morphism
\[
\iota_{j,d,(c,n),\mu} \circ \pi_{i,d,(b,m),\lambda}: M(b,m,\lambda) \longrightarrow M(c,n,\mu)
\]
is injective  if $m \leq l(d)< n+m$ and $m< n$, and it is surjective if $n \leq l(d) < m+n$ and $n<m$.
\end{lem}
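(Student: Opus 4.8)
The plan is to reduce the surjectivity statement to the injectivity statement by duality, and to prove injectivity by writing down the two morphisms explicitly and then running a short combinatorial argument in $\Z/m$. First I would use that $D=\Hom_k(-,k)$ is a contravariant exact equivalence between $\mathcal A$-modules and modules over $\mathcal A^{\op}$, which is again a string algebra (the defining conditions on $(Q,I)$ are symmetric under reversing all arrows). It carries band modules to band modules, substring morphisms to factorstring morphisms and conversely, and winding to unwinding morphisms, hence sends $\iota_{j,d,(c,n),\mu}\circ\pi_{i,d,(b,m),\lambda}$ to a morphism of exactly the same shape between band modules over $\mathcal A^{\op}$, attached to a string of the same length $l(d)$, but with the source dimension $m$ and target dimension $n$ interchanged. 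As $D$ turns surjections into injections, the surjectivity assertion for $\mathcal A$ follows from the injectivity assertion for $\mathcal A^{\op}$, so it suffices to prove injectivity. Using the defining isomorphisms $M(d)\cong M(d^{-1})$ and $M(c)\cong M(c^{-1})$ where needed --- which change neither injectivity nor $l(d)$ nor the hypotheses --- I may assume $i\in\fac_1(d,(b,m))$ and $j\in\sub_1(d,(c,n))$.

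Next I would compute both morphisms in the basis $e_{d_0},\dots,e_{d_{l(d)}}$ of $M(d)$, where $d_r$ denotes the leftdivisor of $d$ of length $r$. Unwinding and projecting shows that $\pi_{i,d,(b,m),\lambda}$ sends $\overline{1\otimes e_l}$ (for $0\le l\le m-1$) to $\sum\lambda^{\lfloor(r+i)/m\rfloor}e_{d_r}$, the sum running over all $r\in\{0,\dots,l(d)\}$ with $r\equiv l-i\pmod m$, whereas $\iota_{j,d,(c,n),\mu}$ sends $e_{d_r}$ to the class of $\overline{1\otimes e_{j+r}}$ in $M(c,n,\mu)$. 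From the first formula and $l(d)\ge m$, the submatrix of $\pi_{i,d,(b,m),\lambda}$ on the coordinates $e_{d_0},\dots,e_{d_{m-1}}$ has exactly one nonzero entry in each row and column, so $\pi_{i,d,(b,m),\lambda}$ is injective; hence injectivity of the composite is equivalent to $\im(\pi_{i,d,(b,m),\lambda})\cap\ker(\iota_{j,d,(c,n),\mu})=0$. If $l(d)<n$ the classes of $\overline{1\otimes e_j},\dots,\overline{1\otimes e_{j+l(d)}}$ are linearly independent in $M(c,n,\mu)$, so $\iota_{j,d,(c,n),\mu}$ is itself injective and there is nothing more to prove. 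So the main case is $n\le l(d)<n+m$, in which $\ker(\iota_{j,d,(c,n),\mu})$ is spanned by the $l(d)-n+1$ vectors $e_{d_r}-\mu\,e_{d_{r+n}}$ with $0\le r\le l(d)-n$.

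In that case I would write a hypothetical element of the intersection as $\pi_{i,d,(b,m),\lambda}\bigl(\sum_l a_l\,\overline{1\otimes e_l}\bigr)=\sum_r c_r\,(e_{d_r}-\mu\,e_{d_{r+n}})$ and compare the coefficients of each $e_{d_r}$. Comparing on the ``middle range'' $l(d)-n+1\le r\le n-1$ gives $a_{(r+i)\bmod m}=0$ there, and comparing on $0\le r\le l(d)-n$ gives, with nonzero scalar coefficients, $a_{(r+i)\bmod m}=0\iff a_{(r+n+i)\bmod m}=0$. Set $\mathcal A_0=\{(r+i)\bmod m:l(d)-n+1\le r\le n-1\}$ and $\mathcal A_1=\{(r+i)\bmod m:0\le r\le l(d)-n\}$, the arcs in $\Z/m$ obtained as the images of $2n-l(d)-1$ and $l(d)-n+1$ consecutive integers. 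Then $a$ vanishes on $\mathcal A_0$, and vanishing of $a$ at a point of $\mathcal A_1$ forces it at the next point of the corresponding orbit of the translation $+n$ on $\Z/m$. Since $m<n$ we have $2n-l(d)-1\ge n-m>0$; if $2n-l(d)-1\ge m$ then $\mathcal A_0=\Z/m$ and every $a_l$ vanishes. Otherwise $\mathcal A_0$ is an arc of length $<m$, and because $\mathcal A_0\cup\mathcal A_1=\Z/m$ (their lengths add up to $n>m$) the complement $\mathcal K:=\Z/m\setminus\mathcal A_0$ is an arc of length $m-(2n-l(d)-1)\le m-(n-m)\le m-\gcd(m,n)$ contained in $\mathcal A_1$. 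An arc that short contains no full coset of $\langle n\rangle=\langle\gcd(m,n)\rangle$ in $\Z/m$, so for each $x\in\mathcal K$ the $+n$-orbit of $x$ must leave $\mathcal K$; while it stays inside $\mathcal K\subseteq\mathcal A_1$ the propagation rule chains $a_x$ to the value at the first orbit point outside $\mathcal K$, which lies in $\mathcal A_0$. Hence $a_x=0$ for every $x$, so $\im(\pi_{i,d,(b,m),\lambda})\cap\ker(\iota_{j,d,(c,n),\mu})=0$ and $\iota_{j,d,(c,n),\mu}\circ\pi_{i,d,(b,m),\lambda}$ is injective.

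The explicit descriptions of $\pi$ and $\iota$ and the reductions in the first two paragraphs are routine bookkeeping; the one genuinely delicate point, and the part I expect to require care, is the last step --- in particular, controlling how the ``middle range'' $\{l(d)-n+1,\dots,n-1\}$ wraps around $\Z/m$ and checking that, exactly because $m<n$, its length $2n-l(d)-1$ is always at least $\gcd(m,n)$, so that $\mathcal A_0$ meets every $\langle n\rangle$-coset in $\Z/m$ and the propagation then forces all the coefficients $a_l$ to be zero.
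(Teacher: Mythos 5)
Your argument is correct, and while it shares the paper's two normalizations (reducing surjectivity to injectivity by duality over $\mathcal A^{\op}$, and normalizing the indices $i,j$), the way you conclude injectivity differs from the paper's. The paper writes down the matrix $A$ of the composite with respect to the canonical bases of the two band modules and finishes by a case analysis on the shape of $A$: for $l(d)<n$ an identity block, for $n\le l(d)$ with $2m\le n$ a block equal to $\lambda\cdot 1_{m\times m}$, and for $n<2m$ a block decomposition in which $A_{22}$ is upper triangular with $1$'s on the diagonal, $A_{31}=\lambda\cdot 1$ and $A_{32}=0$, which forces injectivity. You instead factor the composite through $M(d)$, note that $\pi_{i,d,(b,m),\lambda}$ is injective as soon as $l(d)\ge m$, compute $\ker\iota_{j,d,(c,n),\mu}$ explicitly as the span of the vectors $e_{d_r}-\mu e_{d_{r+n}}$, and show $\im\pi\cap\ker\iota=0$ by propagating vanishing of coefficients along the $+n$-orbits in $\Z/m$, using that the middle range of length $2n-l(d)-1\ge n-m\ge\gcd(m,n)$ meets every coset of $\langle n\rangle$. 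This buys a uniform treatment of the whole range $n\le l(d)<n+m$ (no split at $2m$ versus $n$) and makes transparent exactly where the hypotheses $m<n$ and $l(d)<n+m$ enter; the paper's block-matrix argument is shorter once the matrix is displayed but hides this mechanism. One point you should make explicit in your first reduction: replacing $d$ by $d^{-1}$ only handles the case where both indices are of type $-1$; in the mixed cases ($i\in\fac_1$, $j\in\sub_{-1}$ or vice versa) you must in addition pass to the inverse band, replacing $\mu$ (or $\lambda$) by its inverse --- this is presumably what your isomorphism ``$M(c)\cong M(c^{-1})$'' refers to, and it is harmless, the paper performing the same implicit step elsewhere.
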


Note that Lemma \ref{?->injective} may become wrong if we drop the condition $l(d) < m+n$.

\begin{proof} We may assume that $j=n$ and $i=m$. Up to duality it suffices to show that the morphism
\[
f \defeq \iota_{j,d,(c,n),\mu} \circ \pi_{i,d,(b,m),\lambda}
\]
is injective if $m   \leq l(d) < n+m$ and $m <n$. Let $A$ be the matrix of $f$ with respect to the bases $\ov{1\otimes e_0},\ldots,\ov{1\otimes e_{m-1}}$ of $M(b,m,\lambda)$ and 
$\ov{1\otimes e_0},\ldots,\ov{1\otimes e_{n-1}}$ of $M(c,n,\mu)$. 
 To show that $f$ is injective, we list the possible forms of $A$ depending on the relation between $n,m$ and $l(d)$:
\\
If $l(d) <n$, then $A$ is of the form
\[\left(\begin{array}{c}
A_1\\
A_2
\end{array}\right),\] where $A_1 \in \Mat(m\times m,k)$ is the identity matrix and $A_2\in \Mat(n-m\times m,k)$. From now on we assume that $n\leq l(d)$. If $2m\leq n$, then $A$ is of the form
\[\left(\begin{array}{c}
A_1\\
A_2\\
A_3
\end{array}\right),\] where $A_1,A_2 \in\Mat(m\times m,k)$, $A_3 \in \Mat(n-2m\times m,k)$ and $A_2=\lambda\cdot 1_{m\times m}$ is a  multiple of the identity matrix. Finally, we assume that $n <2m$. We decompose $A=B+C$, where
\[
B=\left(\begin{array}{cc}
1_{n-m\times n-m} & 0_{n-m \times 2m-n}  \\
0_{2m-n\times n-m} & 1_{2m-n \times 2m-n}  \\
\lambda \cdot 1_{n-m\times n-m} & 0_{n-m \times 2m-n}  \\
\end{array}\right),
\]
\[
C=\left(\begin{array}{cc}
0_{2m-n\times n-m} & C_1  \\
C_2 & 0_{n-m \times 2m-n}  \\
0_{n-m \times n-m} & 0_{n-m \times 2m-n}  \\
\end{array}\right)
\]
and $C_1$ and $C_2$ are diagonal matrices. Note that the sizes of the blocks in $B$ and $C$ are not necessarily the same.
Now we see that $A$ is of the form
\[\left(\begin{array}{cc}
A_{11} &A_{12}\\
A_{21}&A_{22}\\
A_{31}& A_{32}
\end{array}\right),\] where
\begin{itemize}
\item $A_{11},A_{31} \in \Mat(n-m\times n-m,k)$,
\item $A_{12},A_{32} \in \Mat(n-m\times 2m-n)$,
\item $A_{21} \in \Mat(2m-n\times n-m,k)$,
\item $A_{22} \in \Mat(2m-n\times 2m-n)$,
\item $A_{22}$ is upper triangular and all its entries on the diagonal are $1$,
\item $A_{31}=\lambda \cdot 1_{n-m\times n-m}$  and
\item $A_{32}$ is zero.
\end{itemize}

\end{proof}

\noindent
The following proposition shows that the functions 
\[
[M(c),-],[-,M(c)]:\mod(\mathcal A,d) \longrightarrow \N
\] for $c \in \mathcal W$ separate $\mathcal S$-families of band modules.

\begin{prop}\label{separate} Let $S=(b_1,\ldots,b_m)$ and $T=(c_1,\ldots,c_n)$ be sequences of bands with $\mathcal F(S),\mathcal F(T) \subseteq \mod(\mathcal A,d)$.
If
\[
[M(c),X]=[M(c),Y] \text{ and } [X,M(c)]=[Y,M(c)]
\]
 for any string $c$, $X \in \mathcal F(S)$ and $Y \in \mathcal F(T)$, then
\[
\mathcal F(S)=\mathcal F(T).
\]
\end{prop}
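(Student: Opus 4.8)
Here is the strategy I would follow. The idea is to translate the hom-numbers into purely combinatorial data attached to the bands, and then reconstruct the bands from that data; the argument splits into three steps.

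\emph{Step 1: reduce to an equality of multisets of band classes.} For $X\in\mathcal F(S)$, write $X\cong\bigoplus_{i=1}^m M(b_i,m_i,\lambda_i)$. By Proposition \ref{string->band} the number $[M(c),M(b_i,m_i,\lambda_i)]$ is independent of $\lambda_i$, so $[M(c),X]=\sum_i[M(c),M(b_i,m_i,1)]$ does not depend on the chosen $X$, and likewise $[X,M(c)]$. Hence the hypothesis says exactly that $\sum_i[M(c),M(b_i,m_i,1)]=\sum_j[M(c),M(c_j,n_j,1)]$ for all strings $c$ (together with an analogous identity for $[-,M(c)]$, which I shall not use). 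On the other hand, $\mathcal F(S)=\mathcal F(T)$ if and only if the multisets $\{[(b_i,m_i)]\}_i$ and $\{[(c_j,n_j)]\}_j$ of $\sim$-classes coincide: any $X=\bigoplus_i M(b_i,m_i,\lambda_i)\in\mathcal F(S)=\mathcal F(T)$ is isomorphic to some $\bigoplus_j M(c_j,n_j,\mu_j)$, and since each summand is indecomposable, Krull--Schmidt together with the isomorphism criterion for band modules recalled above gives a bijection matching the $\sim$-classes with multiplicity; the converse is immediate because $M(b,m,\lambda)\cong M(b',m',\lambda^{\pm1})$ whenever $(b,m)\sim(b',m')$, so that $\mathcal F(b,m)=\mathcal F(b',m')$. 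Thus it remains to recover the multiset of band classes from the displayed hom-numbers.

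\emph{Step 2: recover the substring profiles.} For a sum of band modules $M=\bigoplus_i M(b_i,m_i,1)$ put $n_M(d):=\sum_i\sharp\sub(d,(b_i,m_i))$ for each string $d$. By Proposition \ref{string->band}, $[M(c),M]=\sum_{d\in\mathcal W,\,l(d)\le l(c)}\sharp\fac(d,c)\,n_M(d)$, and for $l(d)=l(c)$ the only contributions come from $d\in\{c,c^{-1}\}$, with nonzero coefficient; since moreover $n_M$ is invariant under $d\mapsto d^{-1}$, this linear system is triangular in $l(c)$ with invertible diagonal. Inverting it recursively in $l(c)$ shows that the equalities of Step 1 force $n_M(d)=n_N(d)$ for every string $d$, where $N=\bigoplus_j M(c_j,n_j,1)$.

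\emph{Step 3: reconstruct the multiset of bands.} Let $D=\dim_k M=\dim_k N$ (so $D=\sum_i m_i=\sum_j n_j$). I claim $n_M$ determines the multiplicity $\mu_B$ of each $\sim$-class $B$ in $\{[(b_i,m_i)]\}_i$. Fix $(b,m)\in B$. Admissibility of $I$ forces every band to contain at least one letter from $Q_1$ and one from $Q_1^{-1}$ in each period (otherwise arbitrarily long paths of $Q$ would be strings), and a short cyclic argument then produces a ``valley'' $j_0$ with $b(j_0)\in Q_1^{-1}$ and $b(j_0+1)\in Q_1$. Choose a multiple $\ell$ of $m$ with $\ell\ge 2D$ and set $d_B:=b(j_0+1)\cdots b(j_0+\ell)$; then $j_0\in\sub_1(d_B,(b,m))$, so $\sharp\sub(d_B,(b,m))\ge1$. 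Conversely, if $(b',m')$ is any band with $\sharp\sub(d_B,(b',m'))\ne0$, then $d_B$ or $d_B^{-1}$ occurs as a factor of the bi-infinite word $b'$, of length $\ell\ge 2D\ge 2m'$; since a factor of length at least twice the period pins down a periodic word up to shift, $m'$ must be the minimal period of the word $d_B$ and $[(b',m')]=B$. Because $\sharp\sub(d_B,-)$ is constant on $B$ (it is determined by the isomorphism type of $M(b,m,1)$ by Step 2 applied to a single band), we obtain $n_M(d_B)=\mu_B\cdot\sharp\sub(d_B,(b,m))$, and dividing by the positive integer $\sharp\sub(d_B,(b,m))$ recovers $\mu_B$; the same computation applies to $N$. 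As $n_M=n_N$, the multisets of band classes agree, and by Step 1 this gives $\mathcal F(S)=\mathcal F(T)$.

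The technical heart is Step 3: one has to verify that every band admits a valley (this is precisely where admissibility of $I$ enters, ruling out, e.g., a loop with no relation on it) and that sufficiently long string factors determine a band up to the equivalence $\sim$. By contrast, Step 1 is routine Krull--Schmidt bookkeeping and Step 2 is a standard triangular inversion using Krause's basis.
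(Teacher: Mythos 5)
Your argument is correct, but it takes a genuinely different route from the paper. You share the first two moves with the paper's proof: the reduction to an equality of multisets of $\sim$-classes, and the triangular inversion of Krause's formula giving $\sharp \sub(d,S)=\sharp \sub(d,T)$ for all strings $d$ (the paper gets this from Corollary \ref{calcdimhomseq} at the start of Lemma \ref{techlem}). After that the paths diverge: the paper proves $\rank X(\alpha)=\rank Y(\alpha)$ via projective presentations, runs the induction of Lemma \ref{techlem} (which uses the string-algebra property that a non-trivial string has at most one extension by an arrow and one by an inverse arrow on each side) to upgrade the $\sub$/$\fac$-equalities to $\sharp\parti(d,S)=\sharp\parti(d,T)$ for all $d$, and then evaluates $\parti$ on the long powers $d_K=(b,Kl)$; you instead stay with $\sub$-counts and make them detect multiplicities directly, by anchoring the long window $d_B$ at a ``valley'' (an inverse arrow followed by an arrow, which exists by admissibility) so that the boundary conditions in $\sub_1$ are met, and by a Fine--Wilf-type periodicity argument showing that a common factor of length $\ell\ge 2D\ge m+m'$ forces $m=m'$ and agreement up to shift or inversion, i.e. $[(b',m')]=B$. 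Your periodicity step is stated tersely (``a factor of length at least twice the period pins down a periodic word up to shift'' by itself only determines each band from $d_B$ separately; one still needs the common-period/Fine--Wilf argument to force $m=m'$), but your bound $\ell\ge 2D$ does make it work, so this is a presentational gap rather than a mathematical one. What your route buys: it avoids the rank computation and the $\parti$-induction altogether, and it uses only the one-sided hypotheses $[M(c),X]=[M(c),Y]$, so it directly yields the stronger separation statement that the paper only records afterwards as a corollary via Auslander--Reiten translation. What the paper's route buys: Lemma \ref{techlem} and the rank argument are reused in the proof of Proposition \ref{prop3}, so the machinery you bypass is not wasted there, whereas your valley-plus-periodicity lemma serves only this proposition.
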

We do not claim that Proposition \ref{separate} holds for sequences of quasi-bands.

Let $X,Y$ be $\mathcal A$-modules. Clearly the dimension vectors of $X$ and $Y$ coincide (i.e. $\dim_k \im X(1_u)=\dim_k \im Y(1_u)$ for all $u\in Q_0$) if and only if $[P,X]=[P,Y]$ for any projective $\mathcal A$-module $P$ if and only if $[X,J]=[Y,J]$ for any injective $\mathcal A$-module $J$. Recall from \cite{Auslander-Reiten}, that 
\[
[U,X] -[X,\tau U]=[P_0,X]-[P_1,X],
\]
 where $P_1 \longrightarrow P_0 \longrightarrow U \longrightarrow 0$ is a minimal projective presentation of an $\mathcal A$-module $U$ and $\tau$ denotes the Auslander-Reiten translation.  Dually, if $0 \longrightarrow U \longrightarrow J_0 \longrightarrow J_1$ is a minimal injective copresentation of $U$, then
\[
[X,U] -[\tau^{-} U,X]=[X,J_0]-[X,J_1].
\]
As the Auslander-Reiten translate of a string module is either $0$ or a string module (see \cite{ButlerRingel}) and as all projective and injective $\mathcal A$-modules are string modules, we obtain the following corollary.

\begin{cor} Let $S$ and $T$ be sequences of bands with $\mathcal F(S),\mathcal F(T)\subseteq \mod(\mathcal A,d)$ and let $X \in \mathcal F(S)$ and $Y \in \mathcal F(T)$. The following are equivalent:
\begin{itemize}
\item[i)] $\mathcal F(S)=\mathcal F(T)$

\item[ii)] $[M(c),X]=[M(c),Y]$ for any string $c$.

\item[iii)] $[X,M(c)]=[Y,M(c)]$ for any string $c$.

\end{itemize}
\end{cor}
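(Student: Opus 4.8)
The plan is to establish the two trivial implications $i)\Rightarrow ii)$ and $i)\Rightarrow iii)$ directly, then to prove $ii)\Rightarrow iii)$ together with its dual $iii)\Rightarrow ii)$, and finally to observe that once $ii)$ and $iii)$ hold simultaneously for every string $c$, Proposition \ref{separate} yields $\mathcal F(S)=\mathcal F(T)$, i.e. $i)$. In this way each of $ii)$, $iii)$ forces both of them and hence $i)$, which closes the cycle.

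For $i)\Rightarrow ii)$ and $i)\Rightarrow iii)$ I would argue that the relevant hom-dimensions are constant on an $\mathcal S$-family of bands: by Propositions \ref{band->string} and \ref{string->band} the numbers $[M(b,m,\lambda),M(c)]$ and $[M(c),M(b,m,\lambda)]$ are independent of $\lambda\in k^*$, so the functions $Z\mapsto[M(c),Z]$ and $Z\mapsto[Z,M(c)]$ are constant on $\mathcal F(S)$ and on $\mathcal F(T)$; since $\mathcal F(S)=\mathcal F(T)$, their values on $X$ and on $Y$ coincide.

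The substantive step is $ii)\Rightarrow iii)$. Assume $ii)$. Since every indecomposable projective $\mathcal A$-module is a string module, letting $c$ range over the strings with $M(c)$ an indecomposable projective turns $ii)$ into $[P,X]=[P,Y]$ for all projective $P$; hence $X$ and $Y$ have the same dimension vector, and therefore also $[X,J]=[Y,J]$ for every injective $J$, by the equivalences recalled before the corollary. Now fix an arbitrary string $c$ and a minimal projective presentation $P_1\to P_0\to M(c)\to 0$. Then $P_0,P_1$ are string modules and $\tau M(c)$ is $0$ or a string module, so the Auslander--Reiten formula $[M(c),X]-[X,\tau M(c)]=[P_0,X]-[P_1,X]$, combined with $ii)$ and with $[P_i,X]=[P_i,Y]$, gives
\[
[X,\tau M(c)]=[Y,\tau M(c)]\qquad\text{for every string }c.
\]
To deduce $iii)$, fix any string $c$. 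If $M(c)$ is injective, then $[X,M(c)]=[Y,M(c)]$ has already been shown. If $M(c)$ is not injective, then $\tau^{-}M(c)$ is a nonzero string module, say $M(c')$, and $\tau M(c')=\tau\tau^{-}M(c)=M(c)$, so applying the displayed identity to the string $c'$ gives $[X,M(c)]=[X,\tau M(c')]=[Y,\tau M(c')]=[Y,M(c)]$. This proves $iii)$. The implication $iii)\Rightarrow ii)$ is entirely dual, using minimal injective copresentations, the dual Auslander--Reiten formula, the fact that $\tau^{-}$ of a string module is $0$ or a string module, and that every indecomposable injective is a string module; all of these are available because $\mathcal A^{\op}$ is again a string algebra.

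I expect the only genuine obstacle to be the bookkeeping for string modules that happen to be projective or injective: these need not lie in the image of $\tau$ or of $\tau^{-}$, so they are not reached by the Auslander--Reiten formula and must be handled separately via the dimension-vector comparison carried out at the start of the step. Everything else reduces to substitution into the Auslander--Reiten formula and a single appeal to Proposition \ref{separate}.
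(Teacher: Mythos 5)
Your proof is correct and follows essentially the same route the paper intends: constancy of hom-numbers on $\mathcal S$-families (Propositions \ref{band->string}, \ref{string->band}, i.e.\ Corollary \ref{calcdimhomseq}) for the trivial directions, the Auslander--Reiten formulas together with the facts that projectives/injectives are string modules and that $\tau^{\pm}$ of a string module is $0$ or a string module for $ii)\Leftrightarrow iii)$, and Proposition \ref{separate} to recover $i)$. The only cosmetic difference is that you run the projective-side formula and then invert $\tau$ (handling injective string modules separately), where the paper's setup suggests applying the dual, injective-copresentation formula directly to $M(c)$; the ingredients and structure are the same.
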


Before we can prove Proposition \ref{separate} we need some additional definitions and a technical lemma.
For any non-trivial string $c$ and any quasi-band $(b,m)$ we set
\begin{itemize}
\item $\parti_1(c,(b,m)) \defeq \{1 \leq i \leq m: b(i) \cdots b(i+l(c)-1) =c\},$

\item $\parti_{-1}(c,(b,m)) \defeq \parti_1(c^{-1},(b,m))$ and 

\item $ \parti(c,(b,m)) \defeq \parti_1(c,(b,m)) \cup \parti_{-1}(c,(b,m)).$
\end{itemize}

We extend the definition of $\sub(c,-),\fac(c,-)$ for a string $c$ and $\parti(c,-)$ for a non-trivial string $c$ to sequences of quasi-bands instead of a single quasi-band. For a sequence $S=(b_1,\ldots,b_n)$ of bands, we set
\begin{itemize}
\item $\parti(c,S) \defeq \bigcup\limits_{i=1}^n (\parti(c,b_i)\times \{i\}) \subseteq \N \times \N$

\item $\sub(c,S) \defeq \bigcup\limits_{i=1}^n (\sub(c,b_i)\times \{i\})\subseteq \N \times \N$

\item $\fac(c,S) \defeq \bigcup\limits_{i=1}^n (\fac(c,b_i)\times \{i\})\subseteq \N \times \N$

\end{itemize}
Moreover, we define
\[
[c,S]\defeq \sum\limits_{d \in \mathcal W,l(d)\leq l(c)} \sharp \fac(d,c) \sharp \sub(d,S)
\]
and
\[
[S,c]\defeq \sum\limits_{d \in \mathcal W,l(d)\leq l(c)} \sharp \fac(d,S)\sharp \sub(d,c).
\]

As direct consequence of Proposition \ref{band->string} and Proposition \ref{string->band} we obtain

\begin{cor}\label{calcdimhomseq} Let $S$ be a sequence of bands and $X \in \mathcal F(S)$. For any string $c$ we have
\[
[c,S]=[M(c),X] \text{ and } [S,c]=[X,M(c)].
\]
\end{cor}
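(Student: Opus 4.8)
The statement to prove is Corollary~\ref{calcdimhomseq}: for a sequence of bands $S$ and $X \in \mathcal F(S)$, one has $[c,S]=[M(c),X]$ and $[S,c]=[X,M(c)]$ for every string $c$.

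\medskip

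\noindent\textbf{Proof plan.}

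The plan is to unwind the definitions and reduce everything to the basis statements already in hand, namely Proposition~\ref{band->string} and Proposition~\ref{string->band}. First I would fix $X \in \mathcal F(S)$. By definition of $\mathcal F(S)$, up to the $\GL_d(k)$-action and a choice of scalars $\lambda_i \in k^*$, the module $X$ is isomorphic to $\bigoplus_{i=1}^n M(b_i,m_i,\lambda_i)$, where $S=(b_1,\dots,b_n)$. Since $\Hom_{\mathcal A}(-,-)$ is additive in both arguments and invariant under isomorphism, it suffices to prove
\[
[M(c),X] = \sum_{i=1}^n [M(c),M(b_i,m_i,\lambda_i)] \quad\text{and}\quad [X,M(c)] = \sum_{i=1}^n [M(b_i,m_i,\lambda_i),M(c)],
\]
which is immediate, and then to compute each summand.

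For a single band $(b_i,m_i)$, Proposition~\ref{band->string} gives
\[
[M(b_i,m_i,\lambda_i),M(c)] = \sum_{d \in \mathcal W,\, l(d)\leq l(c)} \sharp\fac(d,(b_i,m_i))\,\sharp\sub(d,c),
\]
and Proposition~\ref{string->band} gives
\[
[M(c),M(b_i,m_i,\lambda_i)] = \sum_{d \in \mathcal W,\, l(d)\leq l(c)} \sharp\fac(d,c)\,\sharp\sub(d,(b_i,m_i)).
\]
Summing over $i$ and using the definitions $\sub(d,S)=\bigcup_i(\sub(d,b_i)\times\{i\})$ and $\fac(d,S)=\bigcup_i(\fac(d,b_i)\times\{i\})$—which are disjoint unions, so $\sharp\sub(d,S)=\sum_i \sharp\sub(d,b_i)$ and likewise for $\fac$—we get exactly
\[
[X,M(c)] = \sum_{d \in \mathcal W,\, l(d)\leq l(c)} \sharp\fac(d,S)\,\sharp\sub(d,c) = [S,c]
\]
and
\[
[M(c),X] = \sum_{d \in \mathcal W,\, l(d)\leq l(c)} \sharp\fac(d,c)\,\sharp\sub(d,S) = [c,S],
\]
matching the definitions of $[S,c]$ and $[c,S]$.

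\medskip

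\noindent\textbf{Remarks on the few points needing care.} There is essentially nothing hard here—the corollary is a bookkeeping consequence of the two cited propositions—but three small things should be checked. First, independence of $\lambda_i$: Propositions~\ref{band->string} and~\ref{string->band} give the same formula for every $\lambda_i \in k^*$, so the hom-dimensions are constant on $\mathcal F(S)$, which is what makes the statement well-posed for an arbitrary $X \in \mathcal F(S)$ (and in particular the left-hand sides do not depend on the choice of representative $X$). Second, the sums over $d \in \mathcal W$ are finite because $\fac(d,c)$ and $\sub(d,c)$ are empty once $l(d)>l(c)$, so the range $l(d)\leq l(c)$ is genuinely all that contributes; this is why the defining sums for $[c,S]$ and $[S,c]$ restrict to that range. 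Third, the unions defining $\sub(d,S)$, $\fac(d,S)$ are disjoint by the presence of the label $i$, so cardinalities add; this is the only place the index set $\N\times\N$ in the definitions is used. Assembling these, the proof is one line of additivity followed by substitution, so no step is a real obstacle.
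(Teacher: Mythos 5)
Your proof is correct and follows exactly the route the paper intends: the paper states Corollary \ref{calcdimhomseq} as a direct consequence of Propositions \ref{band->string} and \ref{string->band}, and your argument—decomposing $X$ into a direct sum of band modules, using additivity of $\Hom$, and summing the counting formulas via the disjoint unions defining $\sub(d,S)$ and $\fac(d,S)$—is precisely that bookkeeping, spelled out.
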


We come to the technical lemma.

\begin{lem} \label{techlem} Let $S$ and $T$ be sequences of bands with $\rank X(\alpha)= \rank Y(\alpha)$ for any arrow $\alpha$, $X \in \mathcal F(S)$ and $Y \in \mathcal F(T)$ and let $N \in \N$.
 If $[c,S]=[c,T]$ and $[S,c]=[T,c]$ for any string $c$ of length at most $N$, then
$\sharp \parti(d,S)= \sharp \parti(d,T)$ for any non-trivial string $d$ of length at most $N+2$.
\end{lem}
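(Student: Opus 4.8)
The plan is to relate the numbers $\sharp\parti(d,S)$, which count occurrences of a string $d$ as a ``sub-word'' of the bands in $S$ (with orientation ignored in the sense of the $\pm1$-decomposition), to the hom-data $[c,S]$ and $[S,c]$, which by Corollary \ref{calcdimhomseq} are invariants of the family $\mathcal F(S)$. The key point is that $\parti$ counts substrings that sit in the ``interior'' of a band without regard to the arrows bounding them, whereas $\sub$ and $\fac$ record those bounding arrows; so for a string $d$ with both endpoints controlled (i.e. $d=\alpha^{-1}d'\beta$ with $\alpha,\beta\in Q_1$, after possibly passing to a slightly longer string) one has $\parti(d,S)$ expressible through $\sub$ and $\fac$ of nearby strings. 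Concretely, I would first observe that for a non-trivial string $d$, an index $i$ contributing to $\parti_1(d,(b,m))$ either extends to a genuine substring index (the arrow into position $i$ is an inverse arrow and the arrow out of position $i+l(d)-1$ is a direct arrow, so one lands in $\sub_1$ of the string $\alpha^{-1}d\beta^{-1}$ or similar), or it is ``blocked'' on one or both sides because the band turns the other way — and in the blocked case one picks up a contribution to $\fac$ instead. Writing this out, $\sharp\parti(d,S)$ becomes an explicit $\Z$-linear combination of the quantities $\sharp\sub(e,S)$ and $\sharp\fac(e,S)$ as $e$ ranges over the finitely many strings of length $\le l(d)+2$ obtained from $d$ by prepending/appending one arrow (direct or inverse) at each end.

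Next I would feed this into the hom-formulas. By Proposition \ref{band->string} and Proposition \ref{string->band}, $[c,S]=\sum_{l(d)\le l(c)}\sharp\fac(d,c)\,\sharp\sub(d,S)$ and $[S,c]=\sum_{l(d)\le l(c)}\sharp\fac(d,S)\,\sharp\sub(d,c)$, where the matrices $\big(\sharp\fac(d,c)\big)_{d,c}$ and $\big(\sharp\sub(d,c)\big)_{d,c}$, indexed by strings of bounded length, are unitriangular with respect to the length partial order (a string $c$ contains itself exactly once as a sub- and as a factorstring, namely via $(1,c,1)$, and any other contribution comes from a strictly shorter $d$). Hence these matrices are invertible over $\Z$, and one can solve for $\sharp\sub(d,S)$ in terms of the values $[c,S]$ for $c$ of length $\le l(d)$, and symmetrically for $\sharp\fac(d,S)$ in terms of the $[S,c]$. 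Thus the hypothesis $[c,S]=[c,T]$ and $[S,c]=[S,T]$ for all $c$ of length $\le N$ forces $\sharp\sub(d,S)=\sharp\sub(d,T)$ and $\sharp\fac(d,S)=\sharp\fac(d,T)$ for all strings $d$ of length $\le N$.

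Finally I would combine the two steps: to get $\sharp\parti(d,S)=\sharp\parti(d,T)$ for a non-trivial string $d$ of length $\le N+2$, write $\sharp\parti(d,S)$ via the first step as a $\Z$-combination of $\sharp\sub(e,S)$ and $\sharp\fac(e,S)$ for strings $e$ of length $\le l(d)\le N+2$ — but one must check that the strings $e$ actually appearing have length $\le N$, not $N+2$. This is where the rank hypothesis enters: the count $\sharp\sub(e,S)$ for $e$ of length exactly $N+1$ or $N+2$ (the ``boundary cases'' where $d$ nearly fills a band) is itself determined by the ranks $\rank X(\alpha)$ together with shorter data, because a too-long substring essentially wraps a whole band and is governed by $\dim_k$ and the arrow-ranks; alternatively, in those extremal cases the relevant $\sub$/$\fac$ contributions cancel telescopically in the expression for $\parti$. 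I expect the main obstacle to be precisely this bookkeeping at the boundary — making the case distinction (does the substring index extend on the left? on the right? does it wrap around the band?) airtight and verifying that every term of length $N+1$ or $N+2$ is either cancelled or pinned down by the rank assumption — rather than any conceptual difficulty; the unitriangularity and the $\parti$-vs-$\sub$/$\fac$ translation are both essentially combinatorial identities once set up carefully.
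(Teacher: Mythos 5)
Your second step is fine and coincides with the opening of the paper's argument: the matrices $(\sharp\fac(d,c))$ and $(\sharp\sub(d,c))$ are unitriangular for the length order, so the hypothesis gives $\sharp\sub(c,S)=\sharp\sub(c,T)$ and $\sharp\fac(c,S)=\sharp\fac(c,T)$ for all $c$ with $l(c)\le N$. The genuine gap is your first step. It is not true that $\sharp\parti(d,S)$ is a $\Z$-linear combination of the numbers $\sharp\sub(e,S),\sharp\fac(e,S)$ with $e$ obtained from $d$ by adding at most one letter at each end, and the dichotomy you use to justify it (an occurrence either extends to a $\sub$-index or, being ``blocked'', yields a $\fac$-contribution) fails. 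An occurrence of $d$ inside a band is recorded by $\sub$ or $\fac$ only when its two neighbouring letters point in opposite directions; if both neighbours are arrows (or both inverse arrows), the occurrence is seen only by $\sub$- or $\fac$-counts of the extension of $d$ all the way to the end of the run of consecutive arrows of the band, which can be arbitrarily longer than $l(d)+2$. Concretely, take the hereditary string algebra on the quiver with arrows $a_i:i\to i+1$ ($i=1,\dots,7$) and $\beta:1\to 8$, the band $b=a_7a_6\cdots a_1\beta^{-1}$ and $d=a_4$: then $\sharp\parti(d,b)=1$, while every string of length at most $l(d)+2=3$ containing $a_4$ or $a_4^{-1}$ has all its $\sub$- and $\fac$-counts in $b$ equal to zero, so no combination of the data you allow can produce $\sharp\parti(d,b)$. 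For the same reason your boundary fix does not work: the rank hypothesis only controls the counts $\sharp\parti(\alpha,\cdot)$ of single arrows, and it neither determines $\sub$/$\fac$-counts of strings of length $N+1$, $N+2$ (let alone of the long run-extensions above), nor do those terms cancel telescopically.

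What is missing is the mechanism the paper actually uses: an induction on $l(d)$ in which the rank hypothesis is the base case $l(d)=1$ (via $\sharp\parti(\alpha,S)=\rank X(\alpha)$), not a patch at the top lengths. For the inductive step one writes $d=d_1cd_2$ with $l(c)\le N$ and $l(d_1)=l(d_2)=1$, and uses that every occurrence of the non-trivial string $d_1c$ in a band extends on the right by its next letter, which is either the unique arrow continuation or the unique inverse-arrow continuation; hence $\sharp\parti(d_1cd_2,\cdot)+\sharp\parti(d_1cd_3,\cdot)=\sharp\parti(d_1c,\cdot)$ for the sibling extension $d_3$. If $\sharp\parti(d,S)\neq\sharp\parti(d,T)$, the induction hypothesis applied to $d_1c$ forces $\sharp\parti(d_1cd_3,S)\neq\sharp\parti(d_1cd_3,T)$ as well; since $d_2$ and $d_3$ point in opposite directions, one of $d_1cd_2$, $d_1cd_3$ carries opposite directions at its two ends, and its $\parti$-count is exactly a $\fac$- or $\sub$-count of $c$ (uniqueness of the bounding letters uses that $d_1c$ is non-trivial), contradicting the equalities from your second step because $l(c)\le N$. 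Replacing your first step by this induction turns your outline into the paper's proof; as it stands, the central identity of your plan is false.
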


\begin{proof} It follows from Corollary \ref{calcdimhomseq} that $\sharp \sub(c,S) = \sharp \sub(c,T)$ and \\
$\sharp \fac(c,S)=\sharp \fac(c,T)$ for any string $c$ of length at most $N$.
We will prove that $\sharp \parti(d,S)= \sharp \parti(d,T)$ for $1 \leq l(d) \leq N+2$ by induction on the length of $d$. If $l(d)=1$, we may assume that $d$ is an arrow. Let $X \in \mathcal F(S)$ and $Y \in \mathcal F(T)$. We have
\[
\sharp \parti(d,S)=\rank X(d)=\rank Y(d) = \sharp\parti(d,T).
\]
If $N+2\geq l(d)>1$, then $d$ is of the form $d=d_1 c d_2$ for a (possibly trivial) string $c$ of length at most $N$ and strings $d_1,d_2$ of length one. We assume that $\sharp \parti (d,S) \neq  \sharp\parti(d,T)$. By exchanging $S$ and $T$ we can assume that $\sharp \parti(d,S)>\sharp \parti(d,T)$. By the induction hypothesis we know that $\sharp \parti(d_1c,S) =  \sharp \parti(d_1c,T)$ and thus
\begin{eqnarray*}
\sharp  \parti(d_1c,T)&=&\sharp \parti(d_1c,S)\\
&\geq& \sharp \parti(d_1cd_2,S)\\
&>& \sharp \parti(d_1cd_2,T)
\end{eqnarray*}
This shows that $\parti(d_1c,T)-\parti(d_1cd_2,T)$ is non-empty, which implies that there must be a string $d_3\neq d_2$ of length one such that $d_1cd_3$ is a string. As $d_1c$ is non-trivial, there is a most one arrow $\alpha$ such that $d_1c \alpha$ is a string and at most one arrow $\beta$ such that $d_1c\beta^{-1}$ is a string. Consequently, such arrows $\alpha$ and $\beta$ exist and satisfy $\{\alpha,\beta^{-1}\}=\{d_2,d_3\}$. We obtain
\begin{eqnarray*}
\sharp \parti(d_1cd_2,S)+ \sharp \parti(d_1cd_3,S)&=&  \sharp \parti(d_1c,S)  \\
&=&\sharp \parti(d_1c,T)\\
&=&\sharp \parti(d_1cd_2,T)+\sharp \parti(d_1cd_3,T)
\end{eqnarray*}
and thus $\sharp \parti(d_1cd_3,S) \neq  \sharp \parti(d_1cd_3,T)$. If $d_1$ is an arrow, then  $\sharp \fac(c,S)\neq \sharp \fac(c,T)$
and if $d_1^{-1}$ is an arrow, then $\sharp \sub(c,S) \neq \sharp \sub(c,T)$. This gives a contradiction in any case.
\end{proof}

\begin{proof}[Proof of Proposition \ref{separate}]
By the definition of $\mathcal S$-families of band modules it suffices to show that
\[
\sharp \{1 \leq i \leq m: [b_i]=[b]\}=\sharp\{1 \leq i \leq n:[c_i]=[b]\}
\]
for any band $b$. We first show that $\parti(d,S)=\parti(d,T)$ for any non-trivial string $d$.
In order to apply Lemma \ref{techlem}, we need to show that $\rank X(\alpha)=\rank Y(\alpha)$ for any arrow $\alpha$, $X \in \mathcal F(S)$ and $Y \in \mathcal F(T)$. Let $\alpha$ be an arrow. Let $p$ and $q$ be the paths of maximal length such that $q\alpha$ and $\alpha p^{-1}$ are strings.

Note that $M(q\alpha p^{-1})=P_{s(\alpha)}$, where $P_{s(\alpha)}$ is the indecomposable projective module corresponding to the vertex $s(\alpha)$,
and that $M(p^{-1})$ is the cokernel of the morphism $P_{t(\alpha)} \longrightarrow P_{s(\alpha)}$. Applying $\Hom_{\mathcal A}(-,X)$, we obtain the exact sequence
\[
\begin{xy}
\xymatrix{
0 \ar[r]&\Hom_{\mathcal A}(M(p^{-1}),X)\ar[r] & \Hom_{\mathcal A}(P_{s(\alpha)},X) \ar[d]_{\sim}\ar[r] &\Hom_{\mathcal A}(P_{t(\alpha)},X)\ar[d]^{\sim}\\
&& \im X(1_{s(\alpha)}) \ar[r]_{X(\alpha)}& \im X(1_{t(\alpha)})
}
\end{xy}
\]
which shows that $\rank X(\alpha)=[P_{s(\alpha)},X]-[M(p^{-1}),X]$ and thus
\begin{eqnarray*}
\rank X(\alpha)&=&[P_{s(\alpha)},X]-[M(p^{-1}),X]\\
&=&[P_{s(\alpha)},Y]-[M(p^{-1}),Y]\\
&= &\rank Y(\alpha).
\end{eqnarray*}
Applying Lemma \ref{techlem}, we obtain
$\sharp \parti(d,S)= \sharp \parti(d,T)$ for any non-trivial string $d$, as desired.

Let $b=(b,l)$ be a band. For any $k \geq 1$ we define the string
\[
d_k \defeq (b,kl)=b(1)b(2) \cdots b(kl).
\]
Clearly $\sharp \parti(d_k,(b,l))=1$ for any $k\geq 1$ and if $(c,j)$ is a band with $(c,j)  \nsim (b,l)$, then
\[
\sharp \parti (d_k,(c,j))=0
\]
for sufficiently large $k$. We choose $K \in \N$, such that
\[
\sharp \parti (d_K,x)=\begin{cases}
1 &\text{if }  x \sim b\\
0 & \text{otherwise.}
\end{cases}
\]
for any $x \in \{b_1,\ldots,b_m,c_1,\ldots,c_n\}$. We obtain the desired equality
\begin{eqnarray*}
\sharp \{1 \leq i \leq m: [b_i]=[b]\}&=&\sharp \parti(d_K,S)\\
&=&\sharp \parti(d_K,T)\\
&=&\sharp\{1 \leq i \leq n:[c_i]=[b]\}
\end{eqnarray*}
\end{proof}

\section{Proof of Proposition \ref{prop3} and \ref{propdim}}\label{proofof3anddim}

In this section we assume that $\mathcal A=kQ/I$ is a string algebra such that $I$ is generated by a set of paths of length two.
For the proof of Proposition \ref{prop3} we need the following characterization of negligibility.

\begin{lem}\label{charnegligible} The equivalence class of a band $b=(b,m)$ is negligible if and only if the following holds: There are a string $c$ and arrows $\alpha,\beta,\gamma,\delta$ such that the sets
$\parti(\alpha^{-1}c\beta,b)$ and $\parti(\gamma c \delta^{-1},b)$ are non-empty and  $\alpha^{-1} c\delta^{-1}$ and $\gamma c \beta$ are strings.
\end{lem}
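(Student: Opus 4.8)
The plan is to prove the two implications separately, using throughout the Remark that, since $I$ is generated by paths of length two, a word is a string — and hence a periodic word is a quasi-band — as soon as each of its consecutive length-two subwords is a string. I will also use that both sides of the asserted equivalence are invariant under replacing $b$ by a $\sim$-equivalent representative: the left side by definition of negligibility of a class, and the right side because a short computation with the periodicity of $b$ gives $\parti(d,b)\neq\emptyset\iff\parti(d^{-1},b)\neq\emptyset\iff\parti(d,b')\neq\emptyset$ for every $b'\sim b$. Finally I will first treat the non-degenerate configurations and then indicate how the degenerate ones ($c$ trivial, intervening arcs trivial, $l(u)=1$ or $l(v)=1$, overlapping windows) are handled by the same computations.

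\textbf{Negligible implies the combinatorial condition.}
If $b$ is negligible via the first case, with $(b,m)=u\gamma v\alpha^{-1}$, $(b,sm)=w\beta x\alpha^{-1}=u\gamma w\delta^{-1}y$ and $u\gamma$, $v\alpha^{-1}$ quasi-bands, I set $c:=w$. Reading off positions in the periodic word $b$: the prefix $w$ of $w\beta x\alpha^{-1}$ is preceded (cyclically, through the last letter $\alpha^{-1}$ of $(b,m)$) by $\alpha^{-1}$ and followed by $\beta$, so $\parti(\alpha^{-1}w\beta,b)\neq\emptyset$, and the copy of $w$ in $u\gamma w\delta^{-1}y$ is preceded by $\gamma$ and followed by $\delta^{-1}$, so $\parti(\gamma w\delta^{-1},b)\neq\emptyset$. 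Both $\alpha^{-1}w\beta$ and $\gamma w\delta^{-1}$ are then strings, so $\alpha^{-1}w$, $w\beta$, $\gamma w$, $w\delta^{-1}$ are strings, whence $\gamma w\beta$ and $\alpha^{-1}w\delta^{-1}$ are strings by the length-two criterion when $w$ is non-trivial; when $w$ is trivial the two remaining pairs $\gamma\beta$ and $\alpha^{-1}\delta^{-1}$ are exactly the wrap-around pairs of the quasi-bands $u\gamma$ and $v\alpha^{-1}$. If $b$ is negligible via the second case, $(b,m)=wuw^{-1}v$ with $u$ starting and ending in an arrow, $v$ starting and ending in an inverse arrow and $wu^{-1}w^{-1}v$ a quasi-band, I set $c:=w^{-1}$, take $\alpha,\gamma$ to be the first and last letters of $u$ and $\delta^{-1},\beta^{-1}$ the first and last letters of $v$. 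In $(b,m)=wuw^{-1}v$ the subword $w^{-1}=c$ occurs between $u$ and $v$, giving the window $\gamma c\delta^{-1}$, and the subword $w=c^{-1}$ occurs between $v$ and $u$, giving the window $\beta^{-1}c^{-1}\alpha=(\alpha^{-1}c\beta)^{-1}$, so both $\parti$-sets are non-empty; the pairs $\gamma c_1$, $c_l\delta^{-1}$ occur in $b$ and the pair $\alpha^{-1}c_1$ occurs in the quasi-band $wu^{-1}w^{-1}v$, while $c_l\beta$ is a string because its inverse $\beta^{-1}c_l^{-1}$ is a consecutive pair of $b$, so the length-two criterion gives that $\alpha^{-1}c\delta^{-1}$ and $\gamma c\beta$ are strings.

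\textbf{The combinatorial condition implies negligibility.}
Given the data $c,\alpha,\beta,\gamma,\delta$, after possibly passing to the reverse-inverse of $b$ I may assume $\parti(\gamma c\delta^{-1},b)\neq\emptyset$ is witnessed by a forward occurrence of $\gamma c\delta^{-1}$, and then $\parti(\alpha^{-1}c\beta,b)\neq\emptyset$ is witnessed either by a forward occurrence of $\alpha^{-1}c\beta$ (the \emph{parallel} case) or of $(\alpha^{-1}c\beta)^{-1}$ (the \emph{anti-parallel} case). Choosing positions, write $(b,m)$ cyclically, up to rotation, as $\gamma\,c\,\delta^{-1}R\alpha^{-1}\,c\,\beta S$ in the parallel case and as $\gamma\,c\,\delta^{-1}R\beta^{-1}\,c^{-1}\,\alpha S$ in the anti-parallel case, where $R$ runs from the first inverse-arrow flank to the next and $S$ runs back around to the $\gamma$. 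In the parallel case, using that $\alpha^{-1}c\delta^{-1}$ and $\gamma c\beta$ are strings together with the length-two criterion, one checks that $L:=c\delta^{-1}R\alpha^{-1}$ and $L':=c\beta S\gamma$ are quasi-bands; then $(b,m)=(c\beta S)\gamma\,(c\delta^{-1}R)\alpha^{-1}$ exhibits $(b,m)=u\gamma v\alpha^{-1}$ with $u\gamma=L'$ and $v\alpha^{-1}=L$ quasi-bands, and with $s=2$, $w=c$ one has $(b,2m)=w\beta x\alpha^{-1}=u\gamma w\delta^{-1}y$, which is precisely the first case of negligibility. In the anti-parallel case one rotates $(b,m)$ to $c^{-1}(\alpha S\gamma)c(\delta^{-1}R\beta^{-1})$, so with $w:=c^{-1}$, $u:=\alpha S\gamma$ (starting and ending in an arrow) and $v:=\delta^{-1}R\beta^{-1}$ (starting and ending in an inverse arrow) one has $(b,m)=wuw^{-1}v$, and $wu^{-1}w^{-1}v=c^{-1}(\gamma^{-1}S^{-1}\alpha^{-1})c(\delta^{-1}R\beta^{-1})$ is a quasi-band because each of its new consecutive pairs $c_1^{-1}\gamma^{-1}$, $\alpha^{-1}c_1$, $c_l\delta^{-1}$, $\beta^{-1}c_l^{-1}$ is a string by the hypotheses and the length-two criterion; this is the second case of negligibility.

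\textbf{Main obstacle.}
The skeleton above is a direct computation with the length-two criterion; the real work is the bookkeeping of degenerate configurations in the converse, above all when the two windows of $b$ overlap (forcing $R$ or $S$ to be trivial, or forcing $c$ to be periodic, so that the arcs $\alpha S\gamma$, $\delta^{-1}R\beta^{-1}$ or the decomposition $u\gamma v\alpha^{-1}$ must be read off more carefully) and when $c$ is trivial or $l(u)=1$ or $l(v)=1$. In each such configuration one still has to produce the correct $s$ and the correct factorizations of $(b,sm)$, or the correct triple $(w,u,v)$, and to confirm the quasi-band conditions; organising this case distinction — and checking that the parallel, respectively anti-parallel, dichotomy really does land one in the first, respectively second, case of negligibility throughout — is the delicate part.
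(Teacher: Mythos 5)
Your forward direction is fine and in fact more detailed than the paper's (which only remarks that $c=w$ works); your choice $c=w^{-1}$ in the second case is an equivalent variant. The overall skeleton of the converse — normalize one occurrence, split into a parallel and an anti-parallel case, recover the first resp.\ second case of negligibility, and verify the quasi-band conditions with the length-two criterion — is also the paper's. The problem is the step where you ``write $(b,m)$ cyclically, up to rotation, as $\gamma c\delta^{-1}R\alpha^{-1}c\beta S$'' (resp.\ $\gamma c\delta^{-1}R\beta^{-1}c^{-1}\alpha S$). This presupposes that the two windows are disjoint within one period, and everything you then assert (that $c\delta^{-1}R\alpha^{-1}$ and $c\beta S\gamma$ are quasi-bands, that $s=2$ works) is tied to that picture. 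Nothing in the hypotheses forces disjointness in the parallel case: the two occurrences of $c$ may share letters (already with $l(c)=1$ one can have $\ldots\alpha^{-1}c\,\gamma\,c\,\delta^{-1}\ldots$ with $\beta=\gamma$, windows sharing a position), and then your arcs $R,S$ and the claimed quasi-bands are not even well defined, since $b(n+1)\cdots b(m)$ may be shorter than $l(c)+2$ and need not contain $c$. You explicitly defer exactly these configurations as ``bookkeeping,'' but they are not bookkeeping inside your framework — they are the cases your decomposition cannot express — so the converse is not proved as written. In the anti-parallel case disjointness does hold, but it has to be proved (the paper does this with the short observation that an overlap would force $\beta=\delta^{-1}$ or $\gamma^{-1}=\alpha$, an arrow equal to an inverse arrow); you assume it silently.

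The missing idea, which is how the paper avoids any overlap analysis, is that in the parallel case one should anchor the decomposition only at the two letters, not at the full windows: after normalizing so that $b(m)=\alpha^{-1}$ and choosing $n$ with $b(n)=\gamma$, set $u=b(1)\cdots b(n-1)$ and $v=b(n+1)\cdots b(m-1)$, so $(b,m)=u\gamma v\alpha^{-1}$ regardless of how the windows sit. The quasi-band conditions for $u\gamma$ and $v\alpha^{-1}$ then reduce to the two wrap-around pairs $\gamma\, b(1)$ and $\alpha^{-1}b(n+1)$, where $b(1)$ and $b(n+1)$ are the first letter of $c$ (or $\beta$, resp.\ $\delta^{-1}$, when $c$ is trivial); these pairs are strings precisely because $\gamma c\beta$ and $\alpha^{-1}c\delta^{-1}$ are strings, and the factorizations $(b,sm)=w\beta x\alpha^{-1}=u\gamma w\delta^{-1}y$ with $w=c$ hold for any sufficiently large $s$ (not $s=2$ in general). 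With this anchoring your parallel case closes with no case distinction at all, and only the anti-parallel disjointness argument above remains to be added.
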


\begin{proof} If $(b,m)$ is negligible, one can find a string $c$ and arrows $\alpha,\beta,\gamma,\delta$ such that the sets
$\parti(\alpha^{-1}c\beta,b)$ and $\parti(\gamma c \delta^{-1},b)$ are non-empty and  $\alpha^{-1} c\delta^{-1}$ and $\gamma c \beta$ are strings, by a simple case-by-case analysis which we omit. Indeed, the choice $c=w$ will work in both cases.

We now assume that there are a string $c$ and arrows $\alpha,\beta,\gamma,\delta$ such that the sets
$\parti(\alpha^{-1}c\beta,b)$ and $\parti(\gamma c \delta^{-1},b)$ are non-empty and  $\alpha^{-1} c\delta^{-1}$ and $\gamma c \beta$ are strings, and we want to show that $(b,m)$ is negligible. Up to replacing $(b,m)$ by an equivalent band, we may assume that $m \in \parti_1(\alpha^{-1} c \beta,(b,m))$. We choose $n \in \parti(\gamma c \delta^{-1},(b,m))$. There are two cases to consider:
\begin{itemize}
\item $n \in \parti_{1}(\gamma c \delta^{-1},(b,m))$
\item $n \in \parti_{-1}(\gamma c \delta^{-1},(b,m))$

\end{itemize}
If $n \in \parti_{1}(\gamma c \delta^{-1},(b,m))$, we set 
\[
w=c, u=b(1)\cdots b(n-1), v=b(n+1)\cdots b(m-1).
\]
We have
\[
(b,m)=u\gamma v \alpha^{-1} \text{ and } (b,sm)=w\beta x\alpha^{-1}=u\gamma w \delta^{-1}y
\]
for an integer $s\geq 1$ and strings $x,y$. From now on we use that $I$ is generated by paths of length two.
 As $\gamma c \delta^{-1}$ is a string, we see that $\gamma b(1)$ is a string and thus $\gamma u$ is a string as well. As $\gamma u$ and $u \gamma$ are both strings, we obtain that $u\gamma$ is a quasi-band. Similarly one can show that $v\alpha^{-1}$ is a quasi-band.

We now assume that $n \in \parti_{-1}(\gamma c \delta^{-1},(b,m))$. By the definition of the sets $\parti_1$ and $\parti_{-1}$ we have
\[
b(i)=b(n+l(c)+1-i)^{-1}
\]
for $1 \leq i \leq l(c)$. Note that $l(c)<n$, as otherwise 
\[
\beta = b(l(c)+1)=b(n)^{-1}=\delta^{-1}.
\]
 Similarly we obtain that $n+l(c) <m$. Thus the band $(b,m)$ is of the form
\[
(b,m)=c u c^{-1} v
\]
 for some non-trivial strings $u$ and $v$ of length $l(u)=n-l(c)$ and $l(v)=m-n-l(c)$. As $\beta = b(l(c)+1)$ and $\delta = b(n)$, we see that $u$ starts and ends with an arrow. Similarly we obtain that $v$ starts and ends with an inverse arrow. In order to show that $(b,m)$ is negligible, it remains to prove that
\[
(d,m) \defeq c u^{-1} c^{-1} v
\]
is a quasi-band. As $I$ is generated by paths of length two, it suffices to show that
\[
 u^{-1} c^{-1} v \text{ and } v c u^{-1}
\]
are strings.  We show that $v c u^{-1}$ is a string. We decompose $v=v' \alpha^{-1}$ and $u=u'\delta$. As 
\[
v' \alpha^{-1} \text{ and } \alpha^{-1} c \delta^{-1} \text{ and } \delta^{-1} (u')^{-1} 
\]
are strings, we obtain that 
\[
vcu^{-1}=v'\alpha^{-1} c \delta^{-1} (u')^{-1}
\]
is a string. Similarly one can show that $u^{-1} c^{-1} v$ is a string.
\end{proof}

\begin{proof}[Proof of Proposition \ref{prop3}]
Let $b=(b,m)$ be a band such that the closure of $\mathcal F(b)\subseteq \mod(\mathcal A,d)$ is not an irreducible component of $\mod(\mathcal A,d)$. We assume that $(b,m)$ is not negligible and want to obtain a contradiction.

As $\mathcal F(b)$ is irreducible it must be contained in a irreducible component $\mathcal C$, which is regular as it contains $\mathcal F(b)$. Thus there is a sequence $S=(b_1,\ldots,b_n)$ of bands such that the closure of $\mathcal F(S)$ is $\mathcal C$. As the function $X \mapsto \rank X(\alpha)$ is lower semi-continuous on $\mod(\mathcal A,d)$, we see that $\rank X(\alpha) \leq \rank Y(\alpha)$
for any arrow $\alpha$, any $X \in \mathcal F(b)$ and any $Y \in \mathcal F(S)$. On the other hand,
\[
\sum\limits_{\alpha \in Q_1}\rank X(\alpha)=r(X)=d=r(Y)=  \sum\limits_{\alpha \in Q_1}\rank Y(\alpha)
\] and thus
\[
\sharp \parti (\alpha,b)=\rank X(\alpha) = \rank Y(\alpha)=\sharp  \parti(\alpha,S).
\]
For any string $c$ the functions $[M(c),-]$ and 
$[-,M(c)]$ from $\mod(\mathcal A,d)$ to $\N$ are upper semi-continuous and thus
\[
[c,b] \geq [c,S] \text{ and } [b,c] \geq [S,c].
\]
As we know by Proposition \ref{separate} that strings separate $\mathcal S$-families of bands, there is a string $c$ of minimal length with the property that $[c,b]>[c,S]$ or $[b,c]>[S,c]$. 
We only examine the case $[c,b]>[c,S]$, as the other case is treated similarly.
It follows from Corollary \ref{calcdimhomseq} that $\sharp\sub(c,b) > \sharp\sub(c,S)$.
Thus there are arrows $\alpha,\beta$ such that $\alpha^{-1}c\beta$ is a string and
\[
\sharp \parti(\alpha^{-1}c\beta,b)>\sharp \parti(\alpha^{-1}c\beta,S).
\]
By Lemma \ref{techlem} $\sharp \parti(c\beta,b)=\sharp \parti(c\beta,S)$ and therefore there is an arrow $\gamma$ such that $\gamma c  \beta$ is a string and
\begin{eqnarray*}
\sharp \parti(\alpha^{-1}c\beta,b)+ \sharp \parti(\gamma c\beta,b)&=&\sharp \parti(c\beta,b)  \\
&=&\sharp \parti(c\beta,S)\\
&=&\sharp \parti(\alpha^{-1}c\beta,S)+\sharp \parti(\gamma c\beta,S).
\end{eqnarray*}
In particular $ \sharp \parti(\gamma c \beta,b)<\sharp \parti(\gamma c\beta ,S).$
Similarly we find an arrow $\delta$ such that $\alpha^{-1}c \delta^{-1}$ is a string and $\sharp \parti(\alpha^{-1}c\delta^{-1} ,b)<\sharp \parti(\alpha^{-1}c\delta^{-1},S)$.
We obtain 
\[
\sharp \parti( \gamma c,b)-\sharp \parti(\gamma c \beta,b)> \sharp \parti(\gamma c,S)-\sharp \parti(\gamma c\beta,S)\\
\geq 0.
\]
Hence the word $\gamma c \delta^{-1}$ has to be a string and satisfies
\[\sharp \parti(\gamma c \delta^{-1},b)=\sharp \parti(\gamma c,b)-\sharp \parti(\gamma c \beta,b)>0.\]
From the characterization of negligibility in Lemma \ref{charnegligible} we obtain that $[b]$ is not negligible.
\end{proof}

For the proof of the dimension formula Proposition \ref{propdim} we need another lemma.

\begin{lem}\label{charextendable} Let $(b,m)$ and $(c,n)$ be bands. The pair $[(b,m),(c,n)]$ is extendable if and only if the following holds: There are a string $d$ and arrows $\alpha,\beta,\gamma,\delta$ such that the sets
$\parti(\alpha^{-1}d\beta,(b,m))$ and $\parti(\gamma d \delta^{-1},(c,n))$ are non-empty and  $\alpha^{-1} d\delta^{-1}$ and $\gamma d \beta$ are strings.
\end{lem}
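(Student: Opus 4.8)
The plan is to prove both implications by reading contiguous factors off the bi-infinite periodic word $\dots b(1)\cdots b(m)\,b(1)\cdots$ attached to a band, using throughout that (in this section) $I$ is generated by paths of length two: a sequence $\alpha_1\cdots\alpha_n$ of arrows and inverse arrows is then a string exactly when every consecutive pair $\alpha_i\alpha_{i+1}$ is, and hence a $\Z$-periodic such sequence is a quasi-band exactly when every consecutive pair is a string. I also note at the outset that ``$\parti(x,(b,m))\neq\emptyset$'' merely says that $x$ or $x^{-1}$ occurs as a contiguous factor of this bi-infinite word, which is insensitive to cyclic shifts and to reversal of $(b,m)$; so I may freely replace $(b,m)$ and $(c,n)$ by equivalent bands, and it is immaterial whether the statement is read for the bands themselves or for their equivalence classes.

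For the implication ``extendable $\Rightarrow$ combinatorial condition'', I would take witnesses $s,t\ge 1$, strings $u,v,w$ and arrows $\alpha,\beta,\gamma,\delta$ with $(b,sm)=w\beta u\alpha^{-1}$, $(c,tn)=w\delta^{-1}v\gamma$ and $(c,n)(b,m)=c(1)\cdots c(n)b(1)\cdots b(m)$ a quasi-band, and set $d\defeq w$. Reading $(b,sm)=w\beta u\alpha^{-1}$ cyclically, the copy of $\alpha^{-1}$ at the end of a period is followed by $w\beta$ at the start of the next, so $\alpha^{-1}w\beta$ is a factor of the $b$-word and $\parti_1(\alpha^{-1}w\beta,(b,m))\neq\emptyset$; symmetrically $\parti_1(\gamma w\delta^{-1},(c,n))\neq\emptyset$. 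The same reading exhibits $\alpha^{-1}w$ and $w\beta$ as factors of the $b$-word and $\gamma w$, $w\delta^{-1}$ as factors of the $c$-word, hence as strings, and by the length-two criterion this already forces $\alpha^{-1}w\delta^{-1}$ and $\gamma w\beta$ to be strings; the only consecutive pairs not covered arise when $w$ is trivial, namely $\alpha^{-1}\delta^{-1}$ and $\gamma\beta$, and these are exactly the two ``seams'' $b(m)c(1)$ and $c(n)b(1)$ of the quasi-band $(c,n)(b,m)$ and so are strings too. This yields the combinatorial condition with this $d$.

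For the converse, given $d$ and arrows $\alpha,\beta,\gamma,\delta$ as in the statement, I would first (reversing $b$ or $c$ if necessary) arrange $\parti_1(\alpha^{-1}d\beta,(b,m))\neq\emptyset$ and $\parti_1(\gamma d\delta^{-1},(c,n))\neq\emptyset$, and then cyclically shift so that $b(0)=\alpha^{-1}$, $b(1)\cdots b(l(d))=d$, $b(l(d)+1)=\beta$ and likewise $c(0)=\gamma$, $c(1)\cdots c(l(d))=d$, $c(l(d)+1)=\delta^{-1}$. Since $\beta\neq\alpha^{-1}$ one has $l(d)+1\not\equiv 0\pmod{m}$, so any $s\ge 1$ with $sm>l(d)+1$ gives $(b,sm)=d\,\beta\,u\,\alpha^{-1}$ with $u\defeq b(l(d)+2)\cdots b(sm-1)$ a (possibly trivial) string, and likewise $(c,tn)=d\,\delta^{-1}\,v\,\gamma$ for a suitable $t$ and a string $v$. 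It remains to check that $(c,n)(b,m)=c(1)\cdots c(n)b(1)\cdots b(m)$ is a quasi-band; by the length-two criterion only its consecutive pairs need to be verified, and the only pairs not already consecutive in the $b$-word or the $c$-word are the seams $c(n)b(1)$ and $b(m)c(1)$, which are the first pair of $\gamma d\beta$ and of $\alpha^{-1}d\delta^{-1}$ respectively (and equal $\gamma\beta$, $\alpha^{-1}\delta^{-1}$ when $d$ is trivial), hence are strings by hypothesis. Thus $w\defeq d$, together with these $s,t,u,v,\alpha,\beta,\gamma,\delta$, witnesses extendability of $([(b,m)],[(c,n)])$.

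I expect the only real work to be the bookkeeping in the converse: arranging the shifts and reversals so that one and the same string $d$ can play the role of $w$ on both the $b$-side and the $c$-side, verifying that $s$ and $t$ can always be chosen large enough to realize the two factorizations (including the degenerate cases where $u$, $v$, or $w$ is trivial, and the case $l(d)>m,n$), and making precise that ``quasi-band $\iff$ all consecutive pairs are strings'' converts the requirement on $(c,n)(b,m)$ into exactly the two hypotheses $\alpha^{-1}d\delta^{-1},\gamma d\beta\in\mathcal W$. Once these points are settled, both directions reduce to the single observation that the combinatorial data encodes a factor of the periodic word of each band together with a compatible ``crossed'' gluing of the two.
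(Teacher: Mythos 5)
Your proof is correct and follows essentially the same route as the paper, which compresses the whole argument into the single remark that one may take $w=d$ in the definition of extendability and that, since $I$ is generated by paths of length two, the quasi-band condition on $(c,n)(b,m)$ amounts exactly to $\alpha^{-1}d\delta^{-1}$ and $\gamma d\beta$ being strings. Your write-up merely makes explicit the shift/reversal bookkeeping, the choice of $s,t$, and the seam analysis that the paper leaves implicit.
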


\begin{proof} Set $w=d$ in the definition of extendability and observe that $(d,n+m)$ is a quasi-band if and only if $\alpha^{-1} d\delta^{-1}$ and $\gamma d \beta$ are strings.
\end{proof}

\begin{proof}[Proof of Proposition \ref{propdim}]
Let $S=(b_1,\ldots,b_n)$ be a sequence of bands such that  the closure of $\mathcal F(S)$ is an irreducible component in $\mod(\mathcal A,d)$. From the first part of the main theorem we know that
\begin{itemize}
\item  $([b_i],[b_j])$ is not extendable for $i \neq j$ and
\item $[b_i]$ is not negligible for all $i$.
\end{itemize}
 Let $M=M(b_1,\lambda_1)\oplus \cdots \oplus M(b_n,\lambda_n) \in \mathcal F(S)$ such that  $M(b_1,\lambda_1),\ldots,M(b_n,\lambda_n)$ are pairwise non-isomorphic. The dimension of $\ov{\mathcal F(S)}$ is given by the formula
\[
\dim \ov{\mathcal F(S)} = d^2 +n -[M,M],
\]
as $\mathcal F(S)$ is an $n$-parameter family orbits of dimension $d^2-[M,M]$.
Let $N$ be the set of all tuples $(i,j,k,l,c)$ consisting of integers
$i,j,k,l$ and a string $c$ such that
$i \in \fac(c,b_k)$ and $j \in \sub(c,b_l)$.
By Proposition \ref{band->band} a basis of the space $\Hom_{ \mathcal A}(M,M)$ is given by
\begin{itemize}
\item $\sharp N$ morphisms corresponding tuples $(i,j,k,l,c) \in N$ 
\item $n$ morphisms corresponding to the identities on $M_i$ for $i=1,\ldots,n$. 
\end{itemize}
We thus obtain
\[
\dim \ov{\mathcal F(S)} = d^2 -\sharp N
\]
It remains to show that the cardinality of $N$ is 
\[\sharp N=\sum\limits_{\substack{u \in Q_0 \\ u \text{ non-gentle }}}[X,M(1_u)][M(1_u),X]\] for any $X \in \mathcal F(S).$
Let $(i,j,k,l,c) \in N$. There are arrows $\alpha,\beta,\gamma,\delta$ such that the sets $\parti(\alpha^{-1}c\beta,b_k)$ and $\parti(\gamma c \delta^{-1},b_l)$ are non-empty. Applying Lemma \ref{charextendable} in case $k \neq l$ and Lemma \ref{charnegligible} in case $k=l$, we obtain that at least one of the words
$\alpha^{-1}c \delta^{-1}$ and $\gamma c \beta$ cannot be a string. This can only happen if $c$ is trivial. Let $u$ be the vertex of $Q$ with $c=1_u$:

\[
\begin{xy}
\xymatrix{
\ar[dr]^{\alpha}&&\ar[dl]_/0.2em/{\beta}\\
&u\ar[dl]_{\gamma}\ar[dr]^{\delta}&\\
&&
}
\end{xy}
\]

We apply the same lemmas once again: As the sets $\parti(\beta^{-1}\alpha,b_k)$ and $\parti(\gamma  \delta^{-1},b_l)$ are non-empty, we obtain that 
at least one of the words
$\beta^{-1} \delta^{-1}$ and $\gamma \alpha$ cannot be a string.
Therefore none of the pairs of words
\begin{itemize}
\item $(\delta\alpha, \gamma\beta)$
\item $(\gamma\alpha, \delta\beta)$
\end{itemize}
can be a pair of strings. But this is only possible if the vertex $u$ is non-gentle. For the cardinality of $N$ we thus obtain
\begin{eqnarray*}
\sharp N&=&\sum\limits_{\substack{u \in Q_0 \\ u \text{ non-gentle }}} \sharp\fac(1_u,S) \sharp \sub(1_u,S)\\
&=& \sum\limits_{\substack{u \in Q_0 \\ u \text{ non-gentle }}}[S,1_u][1_u,S]\\
&=&\sum\limits_{\substack{u \in Q_0 \\ u \text{ non-gentle }}}[X,M(1_u)][M(1_u),X]
\end{eqnarray*}
for any $X \in \mathcal F(S)$, which completes the proof.
\end{proof}

\newpage
\section{Regular components of indecomposable\\ modules} \label{deg}
An $\mathcal A$-module $Y \in \mod(\mathcal A,d)$ is called a degeneration of $X \in \mod(\mathcal A,d)$ if $Y$ belongs to the closure of the $\GL_d(k)$-orbit of $X$
in $\mod(\mathcal A,d)$. In that case we also say that $X$ degenerates to $Y$ and write $X \leq_{\deg} Y$. We extend this notion to sequences of strings and quasi-bands: Let $S$ and $S'$ be finite sequences of strings and quasi-bands. We call $S$ and $S'$ equivalent, denoted by $S=_{\deg}S'$,  if $\ov{\mathcal F(S)}=\ov{\mathcal F(S')}$,  and
we say that $S$ degenerates to $S'$, in symbols $S \leq_{\deg}S'$,  if $\mathcal F(S') \subseteq  \overline{\mathcal F(S)}$. Note that $\leq_{\deg}$ defines a partial order on the set of equivalence classes of finite sequences of strings and quasi-bands.
Two sequences of strings and bands 
\[
S=(c_1,\ldots,c_l,b_1,\ldots,b_n) \text{ and } S'=(c_1',\ldots,c_{l'}',b_1',\ldots,b_{n'}')
\]
 are equivalent if and only if $l=l'$, $n=n'$ and there are permutations $\sigma \in S_l$ and $\tau \in S_n$ satisfying
\begin{itemize}
\item $c_{\sigma(i)}' \in \{c_i,c_i^{-1}\}$ for $i=1,\ldots,l$ and
\item $b_{\tau(j)}' \sim b_{j}$ for $j=1,\ldots,n$.
\end{itemize}
Note that this characterization might also hold for sequences of strings and quasi-bands, but we do not need it.

We establish two types of degenerations between sequences of bands, which yield a proof for Proposition \ref{prop2}.
\begin{proof}[Proof of Proposition \ref{prop2}]
 Let $(b,m)$ be a band such that the closure of 
\[
\mathcal F(b,m) \subseteq\mod(\mathcal A,d)
\]
 is an irreducible component. If we assume that $(b,m)$ is negligible, we can apply one of the following degenerations and obtain that $\mathcal F(b,m)$ is contained in the closure of another $\mathcal S$-family of quasi-band modules, which is impossible as $\ov{\mathcal F(b,m)}$ is an irreducible component.
\end{proof}

The first degeneration can be described as follows: Cut off a piece of a suitable quasi-band, reverse the piece and reconnect it:
\[
\begin{xy}
\xymatrix{
(b,m)=&\ar[d]_{u}&\ar[l]_{w^{-1}}&\ar@{~>}[r]&  &(c,m)=&  \ar[d]_{u}&\ar[l]_{w^{-1}}\ar[d]_{v}\\
&\ar[r]_{w}&\ar[u]_{v}& && &\ar[r]_{w}&\\
}
\end{xy}
\]

\begin{prop}\label{deg1-1} Let $(b,m)$ be a quasi-band and assume that
there is a string $u$ that starts and ends with an arrow,  a string $v$ that starts and ends with an inverse arrow and a string $w$ such that $(b,m)=wuw^{-1}v$ and
\[
(c,m) \defeq w u w^{-1} v^{-1}
\]
is a quasi-band. Then $(c,m) <_{deg} (b,m)$.

\end{prop}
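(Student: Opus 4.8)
The plan is to construct an explicit one-parameter degeneration realizing $(c,m)$ in the closure of $\mathcal F(b,m)$. Write $(b,m)=wuw^{-1}v$, where $u$ starts and ends with an arrow, $v$ starts and ends with an inverse arrow, and by hypothesis $(c,m)=wuw^{-1}v^{-1}$ is again a quasi-band. I would work in $\mod(\mathcal A,d)$ with $d=\dim_k M(b,m,1)=\dim_k M(c,m,1)$, fix a generic $\lambda\in k^*$, and produce a curve $t\mapsto X_t\in \mathcal F(b,m)$ (i.e. each $X_t$ is a point of the $\GL_d$-orbit of $M(b,m,\mu(t))$ for suitable $\mu(t)\in k^*$, or a limit thereof) with $X_1$ a point of the orbit of $M(b,m,\lambda)$ and $\lim_{t\to 0}X_t$ a point of the orbit of $M(c,m,\lambda')$ for some $\lambda'\in k^*$. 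Since $\ov{\mathcal F(b,m)}$ is closed and $\GL_d$-stable, this limit lies in $\ov{\mathcal F(b,m)}$, and since $\mathcal F(c,m)$ is a single $\GL_d$-orbit up to the $k^*$-parameter, the whole family $\mathcal F(c,m)$ is swept out once one point of it is hit; hence $\mathcal F(c,m)\subseteq\ov{\mathcal F(b,m)}$, which is exactly $(c,m)\le_{\deg}(b,m)$.

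The concrete construction I have in mind is to build both $M(b,m,\lambda)$ and $M(c,m,\lambda)$ from the same underlying graded vector space indexed by the residues $\Z/m\Z$, i.e. by positions $1,\dots,m$ along the (quasi-)band, with the structure maps $X(\alpha)$ determined segment by segment along $w$, $u$, $w^{-1}$ and the last segment (either $v$ or $v^{-1}$). The first three segments $wuw^{-1}$ are common to both $(b,m)$ and $(c,m)$; only the last segment differs, and there the letters of $v$ get replaced by their inverses. On that segment, reversing $v$ to $v^{-1}$ means interchanging, for each arrow occurring there, the role of "source map going right" and "source map going left" — concretely, on the part of the representation supported over $v$ one replaces a block like $\bigl(\begin{smallmatrix}1&0\end{smallmatrix}\bigr)$-type inclusion by a $\bigl(\begin{smallmatrix}0\\1\end{smallmatrix}\bigr)$-type projection in the matrices $X(\alpha)$. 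The curve is obtained by scaling: pick, on the segment over $v$, a coordinate change depending on $t$ that on the two "ends" of $v$ (which are the boundary vertices where $v$ meets $w$ and $w^{-1}$, forced by the hypothesis that $v$ starts and ends with an inverse arrow, so that these boundary maps point "outward") degenerates the $v$-module structure to the $v^{-1}$-module structure as $t\to0$, while keeping the three common segments and the cyclic gluing $\lambda$ fixed. The key point making this work is precisely the boundary condition: $v$ starting and ending with an inverse arrow, and $u$ starting and ending with an arrow, guarantees that at the two cut vertices the arrows incident to the $v$-segment and to the $wuw^{-1}$-part are oriented compatibly, so the scaling is a genuine base change on $M(b,m,\lambda)$ for $t\ne0$ and produces a well-defined module (not merely a point of $\Hom_k(\mathcal A,M_d)$) in the limit.

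I would organize the write-up as follows. First, fix bases and write down the matrices $X^{(b)}(\alpha)$ and $X^{(c)}(\alpha)$ for $M(b,m,\lambda)$ and $M(c,m,\lambda)$ explicitly, highlighting that they agree on all arrows not involved in the $v$-segment and recording precisely how they differ on the $v$-segment. Second, introduce the explicit $\GL_d(k)$-valued (or $k[t,t^{-1}]$-valued) matrix $g(t)$ implementing the segment-reversal-by-scaling, check that $g(t)\star X^{(b)}$ is well-defined for all $t\ne 0$ (so it lies in the orbit of $M(b,m,\lambda)$, hence in $\mathcal F(b,m)$ after adjusting the $k^*$-parameter), and compute the entrywise limit as $t\to0$. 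Third, identify that limit with a point of the orbit of $M(c,m,\lambda')$, using the quasi-band hypothesis on $(c,m)$ to know that $M(c,m,\lambda')$ is an honest $\mathcal A$-module of the right dimension. Finally, invoke closedness and $\GL_d$-stability of $\ov{\mathcal F(b,m)}$ together with density of diagonalizable $\phi$ to conclude $\mathcal F(c,m)\subseteq\ov{\mathcal F(b,m)}$. The main obstacle I anticipate is purely bookkeeping: writing the matrices along a (quasi-)band with two cut points and several segments of varying parity of length, and checking the limit is exactly $M(c,m,\lambda')$ and not some further degeneration — so the delicate step is verifying that the rank of every $X(\alpha)$ is preserved in the limit (equivalently, that $r(\lim_{t\to 0} g(t)\star X^{(b)})=d$), which is what forces the limit to again be a direct sum of band modules and pins it down to $M(c,m,\lambda')$.
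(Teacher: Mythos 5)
Your conclusion runs in the wrong direction. The paper defines $S \leq_{\deg} S'$ to mean $\mathcal F(S')\subseteq\ov{\mathcal F(S)}$ (``$S$ degenerates to $S'$''), so the claim $(c,m) <_{\deg} (b,m)$ asserts $\mathcal F(b,m)\subsetneq\ov{\mathcal F(c,m)}$: one needs a one-parameter family that lies in $\mathcal F(c,m)$ for generic parameter and limits to $M(b,m,\lambda)$. You have set things up the opposite way: your $X_t$ is supposed to lie in $\mathcal F(b,m)$ for $t\neq 0$ with limit in $\mathcal F(c,m)$, which (if it worked) would give $\mathcal F(c,m)\subseteq\ov{\mathcal F(b,m)}$, i.e.\ $(b,m)\leq_{\deg}(c,m)$; you then misread the definition when translating that inclusion back into the $\leq_{\deg}$ notation. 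That the correct direction is $(c,m)\to(b,m)$ is also forced by semicontinuity of hom spaces: in $(b,m)=wuw^{-1}v$ the copy of $w$ between the last letter of $v$ (an inverse arrow) and the first letter of $u$ (an arrow) contributes an element of $\sub(w,(b,m))$, hence an embedding $M(w)\hookrightarrow M(b,m,\lambda)$, whereas in $(c,m)=wuw^{-1}v^{-1}$ that same copy of $w$ is preceded by an arrow (the inverse of $v$'s first letter) and so contributes nothing to $\sub(w,(c,m))$. Since $\dim_k\Hom_{\mathcal A}(M(w),-)$ is upper semicontinuous, the module with the extra hom, namely $M(b,m,\lambda)$, must be the one lying in the orbit closure, not the other way around.

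Beyond the direction, the construction itself remains only a plan, and I think the ``per-segment scaling'' idea is harder than you allow: the two endpoints where $v$ attaches to $w$ and $w^{-1}$ land on the \emph{same} summand $M(w)^2$, so any one-parameter base change must act compatibly at both attachment points simultaneously, and there is no a priori reason your naive scaling keeps the curve inside a single $\GL_d$-orbit for $t\neq0$ or preserves $\rank X(\alpha)$ in the limit. The paper's route avoids this: it first realizes the degeneration among representations of a tiny auxiliary quiver $\tilde Q$ with dimension vector $(1,2,1)$, where it is a $2\times 2$ matrix computation ($X_{\lambda,\mu}$ lies in the $G$-orbit of a fixed $Y_\nu$ for $\mu\neq 0$ and degenerates at $\mu=0$ to $X_{\lambda,0}$), and then transports it through an explicitly constructed $G$-equivariant morphism $\phi:\mathcal V\longrightarrow \mod(\mathcal A,m)$ built block-wise from $M(w)^2$ and the interiors of $u$ and $v$. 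The $G$-equivariance is exactly what keeps $\phi(X_{\lambda,\mu})$ inside $\mathcal F(c,m)$ for $\mu\neq0$ and makes the limit precisely $M(b,m,\lambda)$; your outline has no analogue of that control.
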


\begin{proof} Let $\tilde Q$ be the quiver
\[
\begin{xy}
\xymatrix{
1 \ar@/^1em/[r]^{\alpha_1} & 2 \ar@/^1em/[l]^{\alpha_4} \ar@/_1em/[r]_{\alpha_3}  &3 \ar@/_1em/[l]_{\alpha_2} \\
}
\end{xy}
\]
and $\mathcal V$ be the variety of representations $X=(X(\alpha_1),X(\alpha_2),X(\alpha_3),X(\alpha_4))$ of $\tilde Q$ with dimension vector $(1,2,1)$, i.e.
\[
\mathcal V = \Mat(2 \times 1,k) \times \Mat(2 \times 1,k)\times \Mat(1 \times 2,k)\times \Mat(1 \times 2,k).
\]
For $\lambda,\mu \in k, \lambda \neq 0$, consider the representations
\[
X_{\lambda,\mu} =(\left(\begin{array}{c}\lambda^{-1} \\ \mu \end{array}\right), \left(\begin{array}{c}1 \\ 0 \end{array}\right),
\left(\begin{array}{cc}0 & 1 \end{array}\right),\left(\begin{array}{cc}-\lambda\mu & 1 \end{array}\right)) \in \mathcal V,
\]
\[
Y_\nu \defeq (\left(\begin{array}{c}0\\ \nu^{-1} \end{array}\right), \left(\begin{array}{c}1 \\ 0 \end{array}\right),
\left(\begin{array}{cc}0 & 1 \end{array}\right),\left(\begin{array}{cc}1 & 0 \end{array}\right)) \in \mathcal V.
\]
The algebraic group $G=k^* \times \GL_2(k) \times k^*$ acts on $\mathcal V$ in the usual way, i.e.
\[
(\varphi,\chi,\psi) \star (X_1,X_2,X_3,X_4) \defeq (\chi X_1 \varphi^{-1}, \chi X_2 \psi^{-1}, \psi X_3 \chi^{-1}, \varphi X_4 \chi^{-1}).
\]
For $\lambda,\mu \in k^*$ we apply the base change
\[
g=(-\lambda^{-1}\mu^{-1},\left(\begin{array}{cc}1 & - \lambda^{-1}\mu^{-1} \\0&1\end{array}\right),1)
\]
to $X_{\lambda,\mu}$ and obtain 
$
g \star X_{\lambda,\mu} =Y_{-\lambda^{-1}\mu^{-2}}.
$
Thus $X_{\lambda,\mu}$ belongs to a $G$-orbit of $Y_\nu$ for some  $\nu \in k^*$, as long as $\mu \neq 0$.

Let $A$ be the set of all paths of $Q$ of length at most one, i.e. $A= Q_1 \cup \{1_{x}: x \in Q_0\}$. We identify the affine variety $\mod(\mathcal A,m)$ with a subvariety of
$M_m(k)^{A}$. To show that $M(b,m,\lambda)$ belongs to the closure of $\mathcal F(c,m)$,
we define a morphism 
\[
\phi: \mathcal V \longrightarrow M_m(k)^{A}
\]
satisfying $\phi(X_{\lambda,\mu}) \in \mathcal F(c,m)$ for $\mu \neq 0$ and $\phi(X_{\lambda,0}) \simeq M(b,m,\lambda)$.
Note that we will define $\phi$ in such a way that $\phi(\mathcal V) \subseteq \mod(kQ,m)$.

Here is the definition of $\phi$:
Let
\[
X=(X(\alpha_1),X(\alpha_2),X(\alpha_3),X(\alpha_4)) \in \mathcal V
\] and
let $Z$ be the $\mathcal A$-module
$
Z = V \oplus M(w)^2 \oplus U,
$
where 
\[
V= \begin{cases} 0 &\text{if } l(v)=1\\
M(v')&\text{if } v=\beta^{-1}v'\alpha^{-1}
\end{cases}
\]
and
\[
U= \begin{cases} 0 &\text{if } l(u)=1\\
M(u')&\text{if } u=\gamma u' \delta
\end{cases}
\]
We decompose $U,V$ and $M(w)^2$ as $k$-vector spaces:
\[
M(w)^2=\bigoplus\limits_{d \in \Ld(w)} W_d,
\]
where $W_d = \spani_{k}\{(e_d,0),(0,e_d)\}$,
\[
V=M(v')=V_{1_{t(v')}} \oplus \cdots \oplus V_{v'}
\]
in case $v=\beta^{-1}v' \alpha ^{-1}$, where $V_{d} \defeq \spani_k \{e_d\}$ for $d \in \Ld(v')$ and
\[
U=U_{1_{t(u')}} \oplus \cdots \oplus U_{u'}
\]
in case $u=\gamma u' \delta$, where $U_{d}=\spani_k\{ e_d\}$ for $d \in \Ld(u')$.

We identify $M_m(k)$ with $\End_k(Z)$, each $W_d$ with $k^2$ and each $U_d$ and $V_d$ with $k$ and set $\phi(X)=Z + Z_v + Z_u$, where the definition of $Z_v,Z_u \in \End_k(Z)$ depends on the lengths of $u$ and $v$.
\\

$
\begin{xy}
\xymatrix{
\text{Case } v=\beta^{-1} v' \alpha^{-1}:&V_{v'}\ar@{.}[dd]\ar[dr]^{Z_v(\alpha)=X(\alpha_1)} \\
&& W_{1_{w}}\ar[ld]^{Z_v(\beta)=X(\alpha_4)}\ar@{.}[r] & W_{w}  & U\\
&V_{1_{t(v')}}\\
}
\end{xy}
$
\\

$
\begin{xy}
\xymatrix{
\text{Case } v= \alpha^{-1}: && W_{1_{w}}\ar@(l,u)^/0.5em/{Z_v(\alpha)=X(\alpha_1)\circ X(\alpha_4)}\ar@{.}[r] & W_{w}  & U\\
&&&&\\
}
\end{xy}
$
\\

$
\begin{xy}
\xymatrix{
\text{Case } u=\gamma u' \delta: & &&& U_{1_{t(u')}} \ar[dl]_{Z_u(\gamma)=X(\alpha_2)}\ar@{.}[dd]\\
& V & W_{1_{w}}\ar@{.}[r] & W_{w}  \ar[rd]_{Z_u(\delta)=X(\alpha_3)}\\
& & & & U_{u'}\\
}
\end{xy}
$
\\

$
\begin{xy}
\xymatrix{
\text{Case } u= \gamma: & V& W_{1_{w}}\ar@{.}[r] & W_{w} \ar@(r,u)_/0.5em/{Z_u(\gamma)=X(\alpha_2)\circ X(\alpha_3)} \\
}
\end{xy}
$
\\

By the definition of $\phi$ we have:
\begin{itemize}
\item $\phi(X_{\lambda,0} )\simeq M(b,m, \lambda) \in \mathcal F(b,m)$ for any $\lambda \in k^*$.
\item $\phi(Y_\nu ) \in \mathcal F(c,m)$ for any $\nu \in k^*$.
\end{itemize}
The morphism $\phi$ is $G$-equivariant with respect to the morphism of algebraic groups
$G \longrightarrow \GL_m(k)$
sending $(\varphi,\chi,\psi)$ to
\[
\left(\begin{array}{ccccc}
\varphi \cdot 1_V & &&&\\
& \chi &&&\\
&&\ddots&&\\
&&&\chi&\\
&&&&\psi \cdot 1_U
\end{array}\right).
\]
Therefore $\phi(X_\lambda,\mu)$ belongs to $\mathcal F(c,m)$ for $\mu \neq 0$ and thus $M(b,m,\lambda)\simeq \phi(X_{\lambda,0})$ belongs to the closure of $\mathcal F(c,m)$ for any $\lambda \in k^*$.

To complete the proof we show that
\[
\ov{\mathcal F(b,m)} \neq \ov{\mathcal F(c,m)}.
\]
As $\sharp\sub(w,(b,m)) \neq \sharp \sub(w,(c,m))$, there is a string $a$
with $[a,(b,m)] \neq [a,(c,n)]$ and thus minimum of the function 
\[
[M(a),-]: \mod(\mathcal A,d) \longrightarrow \N
\]
on $\ov{\mathcal F(b,m)}$ differs from the minimum on $\ov{\mathcal F(c,m)}$, which implies that these two sets cannot be equal.
\end{proof}

The second degeneration can be described as follows: Cut a suitable quasi-band into two pieces, and close each piece to separate quasi-bands.
\[
\begin{xy}
\xymatrix{
(b,m) &\ar@{~>}[r]&&(c,n)&&(d,m-n)\\
\ar[d]_{u}&\ar[l]_{\gamma}       && \ar[d]_{u}& &\ar@/_1em/[d]_{\alpha^{-1}} \\
\ar[r]_{\alpha^{-1}}&\ar[u]_{v} && \ar@/_1em/[u]_{\gamma}& &\ar[u]_{v}\\
}
\end{xy}
\]

\begin{prop} Let $(b,m)$ be a quasi-band and assume that
there are strings $u,v,w,x,y$, arrows $\alpha,\beta,\gamma,\delta$ and an $s\geq 1$ with 
\[(b,m)=u\gamma v \alpha^{-1} \text{ and }
(b,sm)=w \beta x \alpha^{-1}=u \gamma w \delta^{-1} y\]
such that
\[(c,n) \defeq u\gamma \text{ and }  (d,m-n) \defeq v\alpha^{-1}\] are quasi-bands. Then $((c,n),(d,m-n)) <_{\deg} (b,m)$.
\end{prop}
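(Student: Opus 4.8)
The plan is to follow the template of the proof of Proposition~\ref{deg1-1}. I would exhibit a small auxiliary quiver $\tilde Q$ with a dimension vector $\dd$, an algebraic group $G$ acting on $\mathcal V\defeq\rep(\tilde Q,\dd)$, a $G$-equivariant morphism $\phi\colon\mathcal V\longrightarrow\mod(kQ,m)$, a family $X_{\lambda,\mu}\in\mathcal V$ (with $\lambda\in k^*$, $\mu\in k$) and a reference family $Y_\nu\in\mathcal V$ ($\nu\in k^*$), arranged so that $\phi(X_{\lambda,0})\cong M(b,m,\lambda)$, $\phi(Y_\nu)\in\mathcal F((c,n),(d,m-n))$, and, for $\mu\neq0$, the point $X_{\lambda,\mu}$ lies in the $G$-orbit of some $Y_\nu$. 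Since $\phi$ is a morphism and $\mathcal F((c,n),(d,m-n))$ is stable under $\GL_m(k)$, $G$-equivariance of $\phi$ then forces $\phi(X_{\lambda,\mu})\in\mathcal F((c,n),(d,m-n))$ for $\mu\neq0$, whence $M(b,m,\lambda)=\phi(X_{\lambda,0})\in\ov{\mathcal F((c,n),(d,m-n))}$ for every $\lambda\in k^*$; as $\ov{\mathcal F((c,n),(d,m-n))}$ is $\GL_m(k)$-stable, this gives $\mathcal F(b,m)\subseteq\ov{\mathcal F((c,n),(d,m-n))}$.

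The combinatorial input is extracted as follows. Put $L\defeq l(w)$. Both expressions in $(b,sm)=w\beta x\alpha^{-1}=u\gamma w\delta^{-1}y$ equal the string $b(1)\cdots b(sm)$, so $b(i)=b(n+i)$ for $1\le i\le L$ while the letters in positions $L+1$ and $n+L+1$ are $\beta$ and $\delta^{-1}$; the integer $s\ge1$ is exactly what lets this make sense when $L\ge n$. Deleting from $M(c,n,1)\oplus M(d,m-n,1)$ the actions of the two arrows underlying $b(1)$ and $b(n+1)$ — the ones that close the loops $(c,n)$ and $(d,m-n)$ — produces an $\mathcal A$-module $Z_0$ which is at the same time obtained from $M(b,m,1)$ by deleting the actions underlying $b(1)$ (the wrap-around gluing relating $\ov{e_m}$ and $\ov{e_1}$ there) and $b(n+1)$ (the internal gluing relating $\ov{e_n}$ and $\ov{e_{n+1}}$ there); because $b(i)=b(n+i)$ for $i\le L$, this $Z_0$ displays two parallel copies of the string module $M(w)$, and one can write $Z_0\cong A\oplus M(w)^2\oplus B$ with $A,B$ string modules. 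The morphism is then $\phi(X)=Z_0+Z_X$ exactly as in Proposition~\ref{deg1-1}, where $Z_X\in\End_k(Z_0)$ feeds the matrices $X(\,\cdot\,)$ in as the actions of $b(1)$, $b(n+1)$ and of the partner paths dictated by $\beta,\delta$ (playing the role of the $\alpha_i$ in Proposition~\ref{deg1-1}) between the distinguished lines sitting at positions $\ov{e_1},\ov{e_n},\ov{e_{n+1}},\ov{e_m}$; the vertices of $\tilde Q$ record the dimensions there, with a two-dimensional vertex for the copy $M(w)^2$ on which $G$ acts through $\GL_2$, and $\phi$ is $G$-equivariant along the block-diagonal homomorphism $G\to\GL_m(k)$. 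One then checks, verbatim as in Proposition~\ref{deg1-1}, that $\phi$ is a morphism landing in $\mod(kQ,m)$, and one chooses $X_{\lambda,\mu}$, $Y_\nu$ (patterned on the families of Proposition~\ref{deg1-1}) so that at $\mu=0$ the two closing arrows are glued \emph{across} the two pieces and rebuild the single loop $(b,m)$ with twist $\lambda$, while for $\mu\neq0$ — after the base change carrying $X_{\lambda,\mu}$ into the orbit of $Y_\nu$ — each closing arrow is glued \emph{within} its own piece, giving $M(c,n,\rho)\oplus M(d,m-n,\sigma)$ with $\rho,\sigma\in k^*$; the hypotheses that $(c,n)$ and $(d,m-n)$ are quasi-bands are precisely what makes these reconnected data $\mathcal A$-module structures.

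Finally I must show the closures are distinct. Both are irreducible, and $\mathcal F(b,m)$, $\mathcal F((c,n),(d,m-n))$ are constructible and dense in them; were the closures equal, then being dense constructible subsets of one irreducible variety the two families would meet, so some $M(b,m,\lambda)$ would be isomorphic to some $M(c,n,\lambda_1)\oplus M(d,m-n,\lambda_2)$. By the Butler--Ringel classification the band classes of the indecomposable summands on both sides would then coincide, forcing $(b,m)$, $(c,n)$, $(d,m-n)$ to be powers of one common band $(b_0,m_0)$; but then the last letters of $(c,n)=u\gamma$ and of $(d,m-n)=v\alpha^{-1}$ would both equal $b_0(m_0)$, contradicting that $\gamma$ is an arrow and $\alpha^{-1}$ an inverse arrow. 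Hence $\ov{\mathcal F(b,m)}\neq\ov{\mathcal F((c,n),(d,m-n))}$, and together with the inclusion above this yields $((c,n),(d,m-n))<_{\deg}(b,m)$. (Alternatively, as at the end of the proof of Proposition~\ref{deg1-1}, one separates the two closures by a string $a$ with $[a,(b,m)]\neq[a,(c,n)]+[a,(d,m-n)]$, using upper semicontinuity of $[M(a),-]$ and Corollary~\ref{calcdimhomseq}.) The main obstacle is the construction of the second paragraph: pinning down the decomposition $Z_0\cong A\oplus M(w)^2\oplus B$ and the family $X_{\lambda,\mu}$ so that turning $\mu$ off genuinely interchanges the two-loop connectivity of $b(1),b(n+1)$ with the single-loop connectivity of $(b,m)$, with the bookkeeping controlled by $(b,sm)=w\beta x\alpha^{-1}=u\gamma w\delta^{-1}y$ and, when $l(w)\ge n$, by the auxiliary integer $s$. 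Everything else — that $\phi$ is a $G$-equivariant morphism, the base change into the orbit of $Y_\nu$ for $\mu\neq0$, and reading off $\phi(X_{\lambda,0})$ and $\phi(Y_\nu)$ — runs almost word for word as in Proposition~\ref{deg1-1}.
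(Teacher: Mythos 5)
Your proposal is a template, not a proof, and the template breaks exactly at the step you yourself flag as "the main obstacle". The skeleton you propose does not exist: deleting the actions underlying $b(1)$ and $b(n+1)$ from $M(c,n,1)\oplus M(d,m-n,1)$ (equivalently from $M(b,m,1)$) yields the direct sum of the two string modules $M(b(2)\cdots b(n))$ and $M(b(n+2)\cdots b(m))$ --- two indecomposables, in general non-isomorphic, and neither containing a copy of $M(w)$, because the two occurrences of $w$ in $(b,sm)$ begin precisely with the letters you deleted and, more seriously, they overlap each other and may wrap around the band several times: the hypotheses allow $l(w)\geq m$ (this is exactly why the integer $s$ appears), so $\dim_k M(w)^2=2(l(w)+1)$ can exceed $\dim_k Z_0=m$ and a decomposition $Z_0\cong A\oplus M(w)^2\oplus B$ is impossible. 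Hence there is no two-dimensional isotypic piece on which a $\GL_2$ could mix two parallel strands, and the mechanism of Proposition \ref{deg1-1} --- which rests on the two \emph{disjoint} copies of $w$ inside one period of $wuw^{-1}v$ --- has no counterpart here. Even if you cut at the natural places ($\gamma=b(n)$ and $\alpha^{-1}=b(m)$, giving the correct skeleton $M(u)\oplus M(v)$), you still cannot treat the reconnecting coefficients as free coordinates of an auxiliary representation variety: relations of $I$ passing through the two junctions impose polynomial constraints among them, and you must in addition identify the isomorphism class of the generic member of the family as $M(c,n,\nu)\oplus M(d,m-n,\mu)$. That identification is the entire content of the proposition and is nowhere supplied in your sketch.

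For comparison, the paper does not imitate Proposition \ref{deg1-1} here. It fixes $N=M(c,n,\nu)$, $M=M(d,m-n,\mu)$ and, for a $k$-linear map $h:M\to N$, takes the point $X_{h,\mu,\nu}\in\mod(\mathcal A,m)$ obtained by transporting the structure of $N\oplus M$ along $\left(\begin{smallmatrix}1_N & h\\ 0 & 1_M\end{smallmatrix}\right)$; then module-hood and the isomorphism type ($X_{h,\mu,\nu}\cong N\oplus M$, so it lies in $\mathcal F((c,n),(d,m-n))$) are automatic, and all the combinatorics goes into building $h$ so that the off-diagonal term $\zeta(a)=N(a)h-hM(a)$ is supported only at $\gamma$ and $\alpha$, with the precisely tuned coefficients $1$ and $-\nu^{-1}\mu^{-1}$. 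This is done by factoring $h$ through $M(w)$ via unwinding and winding morphisms, using $(c,tn)=w\beta z\gamma$ and $(d,r(m-n))=w\delta^{-1}z'\alpha^{-1}$ --- which is exactly how the overlapping, wrapped occurrences of $w$ are handled; setting $\nu=-\lambda\mu^{-1}$ then gives a curve inside $\mathcal F((c,n),(d,m-n))$ whose limit is $M(b,m,\lambda)$. Your final step (distinctness of the closures) is fine in outline, but note that $(b,m)$ is only assumed to be a quasi-band, so $M(b,m,\lambda)$ need not be indecomposable and your Butler--Ringel "powers of a common band" argument is not correct as stated; the separation by upper semicontinuity of $[M(a),-]$ using $\sharp\sub(w,\cdot)$, which you mention in parentheses, is the argument the paper intends.
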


\begin{proof}
We only show that $((c,n),(d,m-n)) \leq_{\deg} (b,m)$ and leave the proof of the inequality $((c,n),(d,m-n)) \neq_{\deg} (b,m)$ to the reader, as it is nearly the same as in the proof of Proposition \ref{deg1-1}.

Let $\mu,\nu \in k^*$ and set $M \defeq M(d,m-n,\mu)$ and $N \defeq M(c,n,\nu)$. For any $h \in \Hom_{k}(M,N)$ there is a unique $\mathcal A$-module structure $X_{h,\mu,\nu}$ on the vector space $N\oplus M$ such that
\[
\left(\begin{array}{cc}
1_N & h\\
0 & 1_M
\end{array}\right): X_{h,\mu,\nu} \longrightarrow N \oplus M
\]
is an $\mathcal A$-isomorphism. By definition, we know that 
\[
X_{h,\mu,\nu}(a)=\left(\begin{array}{cc}
N(a) & \zeta(a)\\
0 & M(a)
\end{array}\right)
\]
with $\zeta(a)=N(a)h-hM(a)$ for any $a \in \mathcal A$. Note that $\zeta(a)=0$ for all $a \in \mathcal A$ in case $h$ is $\mathcal A$-linear.

We will construct an $h \in \Hom_{k}(M,N)$ in such away that
\[
\zeta(\epsilon)(\ov{1\otimes e_i})=\begin{cases}
-\nu^{-1}\mu^{-1} \ov{1\otimes e_{0}} &\text{if }i=m-n-1 \text{ and } \epsilon=\alpha,\\
 \ov{1\otimes e_{n-1}} &\text{if } i=0 \text{ and }\epsilon=\gamma ,\\
0 & \text{otherwise,} \end{cases}
\]
for $0 \leq i <m-n$ and for any path $\epsilon$ of length at most one.
Here is a picture of the module $X_{\mu,\nu} \defeq X_{h,\mu,\nu}$:

\[
\begin{xy}
\xymatrix{
\{0\}\times M(d,m-n,\mu)&(0,\ov{1\otimes e_{0}})\ar[ddrrr]_/-3em/{\gamma \simeq 1}\ar@{-}[rrr]^{v} &&&(0,\ov{1\otimes e_{m-n-1}})\ar@/_1cm/[lll]_{\alpha \simeq \mu^{-1}}\ar[ddlll]^/-3em/{\alpha \simeq -\nu^{-1}\mu^{-1}}\\
\\
M(c,n,\nu)\times \{0\}&(\ov{1\otimes e_0},0)\ar@/_1cm/[rrr]^{\gamma \simeq \nu}&&  &(\ov{1\otimes e_{n-1}},0) \ar@{-}[lll]^{u} \\
}
\end{xy}\]
(An arrow from $x$ to $y$ labeled by $\alpha \simeq\lambda$ indicates that $X_{h,\mu,\nu}(\alpha)(x)=\lambda y$.)

We postpone the construction of $h$ and show how to complete the proof, once $h$ is defined:
For a fixed $\lambda \in k^*$ the module $M(b,m,\lambda)$ belongs to the closure of the one-parameter family $Y_{\mu}\defeq X_{\mu,-\lambda\mu^{-1}}$, $\mu \in k^*$. Consequently, \[((c,n),(d,m-n)) \leq_{deg} (b,m).\]

In order to define $h$, we have to show that
\[c(1)\cdots c(l(w)+1)=w\beta=b(1)\cdots b(l(w)+1).\]
Let $0<i \leq l(w)+1$. Obviously $c(i)=b(i)$ if $i\leq n$. If $n<i$, we may assume inductively that $c(i-n)=b(i-n)$ and thus
\[
c(i)=c(i-n)=b(i-n)=b(i).
\]
Similarly, one can show that \[d(1)\cdots d(l(w)+1)=w\delta^{-1}.\]
There are integers $t,r \geq 0$ and strings $z,z'$ such that
\[
(c,tn)=w \beta z \gamma \text{ and } (d,r(m-n))=w \delta^{-1} z' \alpha^{-1}.
\]
Let $g: M(w) \longrightarrow M(c,tn,\nu^{t})$ be the $k$-linear map sending $e_{d}$ to $\ov{1\otimes e_{l(d)}}$ for $d \in \Ld(w)$. We have
\[
(g \circ \epsilon - \epsilon \circ g)(e_{d})=\begin{cases}-\nu^{t}\ov{1\otimes e_{tn-1}} &\text{if } d=1_{t(w)} \text{ and } \epsilon=\gamma,\\
0 & \text{otherwise,} \end{cases}
\]
for any path $\epsilon$ of length at most one and any $d \in \Ld(w)$. Similarly, let $f:M(d,r(m-n),\mu^r) \longrightarrow M(w)$ be the $k$-linear map sending
$\ov{1\otimes e_i}$ to 
\[
\begin{cases}
e_{d} & \text{if } 0 \leq i \leq l(w),d \in \Ld(w), l(d) =i\\
0 & \text{if } l(w)<i <r(m-n)
\end{cases}
\] for $i=0, \ldots,r(m-n)-1$. The map $f$ satisfies
\[
(f \circ \epsilon - \epsilon \circ f)(\ov{1\otimes e_{i}})=\begin{cases}\mu^{-r}e_{1_{t(w)}} &\text{if } i=tn-1 \text{ and } \epsilon=\alpha,\\
0 & \text{otherwise,} \end{cases}
\]
for any path $\epsilon$ of length at most one and $0 \leq i < tn$. For the composition of $f$ and $g$, we have
\[
(g \circ f \circ \epsilon - \epsilon\circ g \circ f)(\ov{1\otimes e_{i}})=\begin{cases}
\mu^{-r} \ov{1\otimes e_{0}} &\text{if }i=tn-1 \text{ and } \epsilon=\alpha,\\
 -\nu^{t}\ov{1\otimes e_{r(m-n)-1}} &\text{if } i=0 \text{ and }\epsilon=\gamma ,\\
0 & \text{otherwise,} \end{cases}
\]
for any path $\epsilon$ of length at most one and $0 \leq i <r(m-n)$. Let $h'$ be the composition
\[
\begin{xy}
\xymatrix{
M(d,m-n,\mu)\ar[r]& M(d,r(m-n),\mu^r)\ar[r]^/0.8em/{g\circ f}  & M(c,tn,\nu^t)\ar[r]&M(c,n,\nu),
}
\end{xy}
\]
where the first map is an unwinding morphism and the last map is a winding morphism. We have
\[
(h' \circ \epsilon - \epsilon\circ h')(\ov{1\otimes e_{i}})=\begin{cases}
\mu^{-1} \ov{1\otimes e_{0}} &\text{if }i=tn-1 \text{ and } \epsilon=\alpha,\\
 -\nu\ov{1\otimes e_{m-n-1}} &\text{if } i=0 \text{ and }\epsilon=\gamma ,\\
0 & \text{otherwise,} \end{cases}
\]
for any path $\epsilon$ of length at most one and any $0 \leq i < m-n$. Finally, we set $h\defeq -\nu^{-1}h'$.

\end{proof}

\newpage
\section{Extensions}\label{ext}

Let $(b,m)$ and $(c,n)$ be bands. Proposition \ref{prop1} will follow from the following two lemmas.
\begin{lem}\label{notextendable->ext=0}
 If the pair $([(b,m)],[(c,n)])$ is not extendable, then \[\Ext_{\mathcal A}^1(M(b,m,\lambda),M(c,n,\mu))=0\] for any $\lambda,\mu \in k^*$ with $\lambda \neq \mu,\mu^{-1}$.
\end{lem}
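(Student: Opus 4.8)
The plan is to work with an explicit projective resolution of the band module $M(b,m,\lambda)$ and compute $\Ext^1$ as a cokernel. Since $\mathcal A$ is a string algebra, every indecomposable projective is a string module $P_u = M(q_u)$ for a suitable string $q_u$, and the syzygy $\Omega M(b,m,\lambda)$ is again a direct sum of string and band modules; in fact, because $M(b,m,\lambda)$ is a band module, its minimal projective presentation $P_1 \xrightarrow{\rho} P_0 \to M(b,m,\lambda) \to 0$ has both $P_0$ and $P_1$ decomposing into summands indexed by the "deep" positions of the band $(b,m)$ — the indices $i$ where $b(i)^{-1} \in Q_1$ and $b(i+1) \in Q_1$ (peaks), resp. $b(i) \in Q_1$ and $b(i+1)^{-1} \in Q_1$ (valleys). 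Applying $\Hom_{\mathcal A}(-,M(c,n,\mu))$ gives
\[
\Ext^1_{\mathcal A}(M(b,m,\lambda),M(c,n,\mu)) = \coker\bigl(\Hom_{\mathcal A}(P_0,M(c,n,\mu)) \xrightarrow{\rho^*} \Hom_{\mathcal A}(P_1,M(c,n,\mu))\bigr),
\]
and $\Hom_{\mathcal A}(P_u, M(c,n,\mu)) \cong \im M(c,n,\mu)(1_u)$, a space whose dimension is controlled by the combinatorics of how $P_u$ (a string module) maps into the band module $M(c,n,\mu)$ — this is exactly what Proposition~\ref{string->band} describes in terms of the sets $\fac(d,q_u)$ and $\sub(d,(c,n))$.

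The key steps, in order, would be: (1) Write down the minimal projective presentation of $M(b,m,\lambda)$ explicitly, identifying $P_0 = \bigoplus_{\text{valleys }i} P_{s(b(i))\text{-type}}$ and $P_1$ in terms of peaks, with the connecting map $\rho$ given on each summand by a substring/factorstring composition dictated by the band; this is the standard "top and socle" description for string/band modules, and one should check it specializes correctly since $I$ is generated by paths (no length-two hypothesis needed here). (2) Identify $\Ext^1$ with the cokernel above and, using Propositions~\ref{string->band} and \ref{band->band}, translate the vanishing of this cokernel into the statement that every basis morphism $P_1 \to M(c,n,\mu)$ factors through $\rho$. (3) Suppose for contradiction that $\Ext^1 \neq 0$: then there is a basis element of $\Hom_{\mathcal A}(P_1, M(c,n,\mu))$ — a factorstring-then-substring morphism through some short string $d$ — that is not in the image of $\rho^*$. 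Unwinding what it means for such a morphism to be obstructed produces, on the one hand, a substring datum $(c_1, c_2, c_3) \in \sub(d, (c,n))$ forcing $d$ (or a prolongation of it) to sit inside $(c,n)$ preceded by an inverse arrow and followed by an arrow, and on the other hand a factorstring datum forcing the analogous (dual) occurrence inside $(b,m)$. Combining the two, one extracts strings $u,v,w$ and arrows $\alpha,\beta,\gamma,\delta$ with $(b,sm) = w\beta u\alpha^{-1}$ and $(c,tn) = w\delta^{-1}v\gamma$ for suitable $s,t$, and the fact that the obstruction survives means precisely that $(c,n)(b,m) = c(1)\cdots c(n)b(1)\cdots b(m)$ is again a quasi-band; that is, $([(b,m)],[(c,n)])$ is extendable — contradiction. (4) Finally, track where the hypothesis $\lambda \neq \mu, \mu^{-1}$ enters: it guarantees that $M(b,m,\lambda) \not\cong M(c,n,\mu)$ and, more importantly, that the "winding/unwinding" scalars appearing when one tries to lift a morphism through $\rho$ do not conspire to make an otherwise non-liftable morphism liftable (the relevant determinant, a Vandermonde-type expression in $\lambda, \mu$, is nonzero exactly when $\lambda/\mu$ and $\lambda\mu$ avoid the forbidden values).

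The main obstacle I expect is step (3): converting "a specific basis morphism $P_1 \to M(c,n,\mu)$ is not hit by $\rho^*$" into the clean combinatorial extendability data. The difficulty is bookkeeping — the morphism factors through a short string $d$, and one must chase this $d$ simultaneously into both bands, match up the inverse-arrow/arrow boundary conditions coming from $\sub$ and $\fac$, and verify that the resulting concatenation genuinely avoids all relations (so that it is a quasi-band, not just a formal word). The hypothesis that $I$ is generated by paths is what makes "being a string/quasi-band" a purely local length-two condition (see the Remark), which should keep this verification manageable; without it the prolongations $w, u, v$ could fail to glue. A secondary subtlety is making sure the presentation in step (1) is genuinely minimal, so that $\Ext^1$ is the full cokernel and no spurious split summands inflate it; this follows from the indecomposability of the string summands of $P_0$ and $P_1$ together with the fact that $\rho$ has entries in the radical.
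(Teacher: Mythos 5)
Your approach — explicit minimal projective presentation of the band module and a cokernel computation — is genuinely different from the paper's. The paper instead invokes the Auslander--Reiten formula $\Ext^1_{\mathcal A}(N,M) \simeq \HomP_{\mathcal A}(\tau^{-1}M, N)$ together with the fact that band modules are $\tau$-stable; this reduces the claim to showing that every morphism $M(c,n,\mu) \to M(b,m,\lambda)$ factors through a projective. The hypothesis $\lambda \neq \mu, \mu^{-1}$ then enters only to guarantee that the two band modules are non-isomorphic, so that Proposition~\ref{band->band} yields a basis of $\Hom$ consisting purely of compositions $\iota_{j,w,(b,m),\lambda} \circ \pi_{i,w,(c,n),\mu}$ without an extra identity summand (the identity would not factor through a projective). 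This is emphatically not a Vandermonde phenomenon, and your speculation at step (4) is off target. For each such basis morphism, non-extendability forces the word $(c,n)(b,m)$ to fail to be a quasi-band, hence some path $c(i)\cdots c(n)b(1)\cdots b(j)$ lies in $I$, and the paper uses that path to write down an explicit factorization $f \circ g \circ h$ through a projective string module $P_{s(r)} \cong M(q'w r' p^{-1})$.

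There is a genuine gap at your step (3). Because you argue by contraposition, you must convert ``some basis morphism $P_1 \to M(c,n,\mu)$ is not in the image of $\rho^*$'' into explicit extendability data: you have to exhibit $s,t,u,v,w,\alpha,\beta,\gamma,\delta$ and then verify that the concatenation $(c,n)(b,m)$ is genuinely a quasi-band, not merely a formal word. That is the hard direction. In the paper's direction, non-extendability hands you a single concrete path in $I$ and you are done; in your direction you must reconstruct, from an algebraic obstruction to lifting through $\rho$, the absence of any relation along an entire word, which requires tracking the peaks and valleys of both bands simultaneously through the presentation. You rightly identify this as the main obstacle, but the sketch offers no mechanism for it, and the remark that the length-two hypothesis makes it ``manageable'' is not available here: Lemma~\ref{notextendable->ext=0} is stated and proved without assuming $I$ is generated by paths of length two. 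The AR-duality reduction is the real shortcut — it localizes the verification to one basis morphism and one short string $w$ at a time, and converts ``not a quasi-band'' into a directly usable relation in $I$. I would adopt that reduction rather than the resolution/cokernel route.
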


\begin{lem} \label{extenable->extneq0}
 If the pair $((b,m),(c,n))$ is extendable, there is a non-split short exact sequence of $\mathcal A$-modules 
\[0 \longrightarrow X \longrightarrow M_{X,Y} \longrightarrow Y \longrightarrow 0\] with $M_{X,Y} \in \mathcal F(d,n+m)$ for any $X \in \mathcal F(c,n)$, $Y \in \mathcal F(b,m)$, where \[(d,n+m)=c(1)\cdots c(n)b(1)\cdots b(m).\]
\end{lem}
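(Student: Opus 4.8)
The goal is to produce, for bands $(b,m)$ and $(c,n)$ forming an extendable pair, a non-split short exact sequence $0\to X\to M_{X,Y}\to Y\to 0$ with $M_{X,Y}\in\mathcal F(d,n+m)$, where $(d,n+m)=(c,n)(b,m)$ is a quasi-band. By definition of extendability there are $s,t\geq 1$, strings $u,v,w$ and arrows $\alpha,\beta,\gamma,\delta$ with $(b,sm)=w\beta u\alpha^{-1}$ and $(c,tn)=w\delta^{-1}v\gamma$, and the concatenation $(d,n+m)$ is a quasi-band. The plan is to build $M_{X,Y}$ as a "glued" module on the vector space $Y\oplus X$, i.e. a representation of the form $\left(\begin{smallmatrix}Y(a)&\zeta(a)\\0&X(a)\end{smallmatrix}\right)$ where $\zeta$ is a Hochschild $1$-cocycle measuring the extension, exactly as in the proof of the second degeneration proposition (the module $X_{h,\mu,\nu}$ there). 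Concretely I would first reduce to the case $X=M(c,n,1)$, $Y=M(b,m,1)$ and then twist scalars at the end, since for fixed $\lambda,\mu\in k^*$ one can absorb them into the automorphisms; the genuinely new content is the construction of the cocycle, not the bookkeeping of eigenvalues.

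First I would make precise what the quasi-band $(d,n+m)$ looks like: its basis $\ov{1\otimes e_0},\dots,\ov{1\otimes e_{n+m-1}}$ splits as the first $n$ vectors (spanning a submodule isomorphic to $M(c,n,1)=X$) and the last $m$ vectors (spanning a quotient isomorphic to $M(b,m,1)=Y$), provided the "seam" between $c(n)$ and $b(1)$ and between $b(m)$ and $c(1)$ is compatible with the string/quotient-string conditions. This is where I would invoke the hypotheses involving $w,\beta,u,\alpha,\delta,v,\gamma$: they guarantee exactly that reading $(d,n+m)$ cyclically one encounters, at the two seams, the local configurations $\cdots\beta\,(\text{seam})\,\delta^{-1}\cdots$ (an arrow followed by an inverse arrow — a source of a "hat", giving a submodule on one side and a quotient on the other) so that $X$ sits as a submodule and $Y$ as the corresponding quotient. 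I would then read off the extension cocycle $\zeta$ from the single "returning" arrow of $(d,n+m)$ that crosses from the $Y$-block back into the $X$-block; all other arrows act diagonally. Non-splitness is then immediate: if the sequence split, $M_{X,Y}\cong X\oplus Y$ would be decomposable, but $M_{X,Y}=M(d,n+m,1)$ lies in $\mathcal F(d,n+m)$ and for generic scalar it is a genuine band module hence indecomposable — more robustly, I would separate the two by a hom-invariant, since $\sharp\parti(d_K,(d,n+m))=1$ for a suitable long power $d_K$ of $(d,n+m)$ while $\sharp\parti(d_K,((b,m),(c,n)))=0$ for large $K$, using the separation results of Section~\ref{hom} (Corollary~\ref{calcdimhomseq} and Proposition~\ref{separate}).

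To get the statement for arbitrary $X\in\mathcal F(c,n)$ and $Y\in\mathcal F(b,m)$ — i.e. for all scalar parameters simultaneously — I would either (a) rerun the construction verbatim with $M(c,n,\nu)$ and $M(b,m,\mu)$ in place of the unit automorphisms, checking that the seam cocycle now carries a factor like $\mu^{-1}$ or $-\nu$ as in the degeneration proof, and that $M_{X,Y}=M(d,n+m,\lambda)$ with $\lambda$ the appropriate product of $\mu$ and $\nu$; or (b) observe that $\mathcal F(c,n)$ and $\mathcal F(b,m)$ are single $\mathrm{GL}$-orbits up to the scalar parameter, so it suffices to treat one representative in each and then conjugate. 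I would use route (a) since it is parallel to the already-written degeneration argument and makes $M_{X,Y}$ manifestly a band module in $\mathcal F(d,n+m)$.

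The main obstacle I anticipate is the seam analysis: verifying that the two hypothesized factorizations $(b,sm)=w\beta u\alpha^{-1}$ and $(c,tn)=w\delta^{-1}v\gamma$ — together with the assumption that $(d,n+m)$ is a quasi-band — force precisely the right local arrow-configurations at both junctions so that $X$ is a submodule and $Y$ the complementary quotient with exactly one connecting arrow. The subtlety is that $l(w)$ may exceed $m$ or $n$ (which is why $s,t$ appear), so one must argue, as in the proof of the second degeneration, that the letters of $c$ and $b$ near the seams agree with $w\beta$, $w\delta^{-1}$ by an inductive "period-folding" argument ($c(i)=c(i-n)$, $b(i)=b(i-m)$ once $i$ exceeds the period). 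Once this combinatorial compatibility is pinned down, the rest — defining $\zeta$, checking it is a cocycle, i.e. that $M_{X,Y}$ is an $\mathcal A$-module, and exhibiting the short exact sequence and its non-splitness — is routine and follows the template already established in Section~\ref{deg}.
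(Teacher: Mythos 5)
Your overall strategy (realize the middle term as a glued module on $Y\oplus X$ and identify it with a member of $\mathcal F(d,n+m)$) has the right general shape, but the structural claim on which your whole construction rests is false, and it is precisely where the real work lies. You assert that in $M(d,n+m,\cdot)$ the first $n$ basis vectors $\ov{1\otimes e_1},\ldots,\ov{1\otimes e_n}$ span a submodule isomorphic to $X=M(c,n,\cdot)$, that the last $m$ span a complementary quotient isomorphic to $Y$, and that a single returning arrow carries the extension. Look at the two seams: the letters $d(n)=c(n)=\gamma$ and $d(n+m)=b(m)=\alpha^{-1}$ are forced, but the crossing letters are $d(n+1)=b(1)$ and, after wrapping, $d(1)=c(1)$, and whenever $l(w)\geq 1$ both of these equal the first letter of $w$. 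Consequently one crossing arrow points from the $b$-block into the $c$-block and the other from the $c$-block into the $b$-block, so \emph{neither} coordinate block is a submodule. Even in the case $l(w)=0$, where the span of $e_1,\ldots,e_n$ is a submodule, it is isomorphic to the string module $M(c(2)\cdots c(n))$, not to the band module $X$: the wrap-around action (by $\delta$) present in $M(c,n,\mu)$ is absent. The true embedding of $X$ into the middle term has a ``staggered'' image that is not a coordinate subspace; in the paper it is the composite $\iota_{n+m,xw,(d,n+m),-\lambda\mu}\circ\pi_{n,xw,(c,n),\mu}$ of a factorstring and a substring morphism along the string $xw$ of length $n+l(w)$ (which may exceed both periods, whence the winding/unwinding maps and the period-folding you mention), its injectivity requires the matrix analysis of Lemma \ref{?->injective}, the surjection onto $Y$ is built dually along $yw$, and forcing $g\circ f=0$ is a nontrivial index computation that fixes the middle parameter as $-\lambda\mu$ rather than a naive product. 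None of this is recoverable from a ``single connecting arrow'' picture, so the step you defer as routine seam analysis is in fact the proof.

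A secondary gap: your non-splitness argument leans on $M_{X,Y}$ being ``a genuine band module hence indecomposable'', but $(d,n+m)$ is only guaranteed to be a quasi-band; if it is a proper power of a band, $M(d,n+m,\nu)$ decomposes for every $\nu$ (distinct eigenvalues of the twisting automorphism), so indecomposability is unavailable. Your fallback via Proposition \ref{separate} is also not directly usable, since that result is proved for families of bands and the paper explicitly declines to claim it for quasi-bands. The paper instead distinguishes the middle term from $X\oplus Y$ by a single hom-count against a string module, using $\sharp \sub(w,(c,n))+\sharp\sub(w,(b,m))>\sharp\sub(w,(d,n+m))$ together with Proposition \ref{string->band}; some invariant of this kind must replace the appeal to indecomposability.
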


Recall from \cite{Riedtmann} that an $\mathcal A$-module $A$ degenerates to a direct sum of $\mathcal A$-modules $B\oplus C$ whenever there is a short exact sequence
\[
0 \longrightarrow B \longrightarrow A \longrightarrow C \longrightarrow 0.
\]
Combining this result with Lemma \ref{extenable->extneq0}, we obtain
\begin{cor} If the pair $((b,m),(c,n))$ is extendable, then
\[\mathcal F((c,n),(b,m)) \subseteq \ov{\mathcal F(d,n+m)},\] where 
\[(d,n+m)=c(1)\cdots c(n)b(1)\cdots b(m).\]
\end{cor}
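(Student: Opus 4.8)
The statement to prove is the Corollary: if $((b,m),(c,n))$ is extendable, then $\mathcal F((c,n),(b,m)) \subseteq \ov{\mathcal F(d,n+m)}$, where $(d,n+m) = c(1)\cdots c(n)b(1)\cdots b(m)$. The plan is essentially to assemble two ingredients already available in the excerpt: Lemma \ref{extenable->extneq0} and the cited result of Riedtmann.

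First I would fix an arbitrary point of $\mathcal F((c,n),(b,m))$. By the very definition of an $\mathcal S$-family (the image of $\GL_d(k)\times(k^*)^2$ under the map sending $(g,\nu,\lambda)$ to $g\star(M(c,n,\nu)\oplus M(b,m,\lambda))$), every such point lies in the $\GL_d(k)$-orbit of $M(c,n,\nu)\oplus M(b,m,\lambda)$ for some $\nu,\lambda\in k^*$. Since $\ov{\mathcal F(d,n+m)}$ is $\GL_d(k)$-stable and closed, it suffices to show $M(c,n,\nu)\oplus M(b,m,\lambda)\in\ov{\mathcal F(d,n+m)}$ for each fixed $\nu,\lambda\in k^*$.

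Next, apply Lemma \ref{extenable->extneq0} with $X=M(c,n,\nu)\in\mathcal F(c,n)$ and $Y=M(b,m,\lambda)\in\mathcal F(b,m)$ (here extendability of the pair of bands — not just of the equivalence classes — is exactly the hypothesis of that lemma, so no reduction is needed). This yields a short exact sequence $0\to X\to M_{X,Y}\to Y\to 0$ with $M_{X,Y}\in\mathcal F(d,n+m)$. By Riedtmann's result, recalled just above the Corollary, the existence of this short exact sequence gives $M_{X,Y}\leq_{\deg} X\oplus Y$, i.e.\ $X\oplus Y$ lies in the closure of the $\GL_d(k)$-orbit of $M_{X,Y}$. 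Since $M_{X,Y}\in\mathcal F(d,n+m)$, that orbit is contained in $\mathcal F(d,n+m)$, hence $X\oplus Y = M(c,n,\nu)\oplus M(b,m,\lambda)\in\ov{\mathcal F(d,n+m)}$.

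Combining the two displayed conclusions finishes the argument: every point of $\mathcal F((c,n),(b,m))$ is in the $\GL_d(k)$-orbit of some $M(c,n,\nu)\oplus M(b,m,\lambda)$, which lies in $\ov{\mathcal F(d,n+m)}$, and the latter is closed and $\GL_d(k)$-stable. There is no real obstacle here — the only point requiring a line of care is the bookkeeping that a generic point of the two-parameter family $\mathcal F((c,n),(b,m))$ is genuinely $\GL_d(k)$-conjugate to a module of the form $M(c,n,\nu)\oplus M(b,m,\lambda)$ with both parameters nonzero, which is immediate from the definition of $\mathcal F(S)$, together with the observation that the dimension $d=n+m$ matches on both sides because $\dim_k M(c,n,1)+\dim_k M(b,m,1)=\dim_k M(d,n+m,1)$.
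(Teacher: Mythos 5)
Your proposal is correct and matches the paper's own argument: the Corollary is obtained exactly by combining Lemma \ref{extenable->extneq0} with Riedtmann's result that a short exact sequence $0\to B\to A\to C\to 0$ forces $A\leq_{\deg}B\oplus C$, the remaining $\GL_d(k)$-equivariance bookkeeping being routine. No further comment is needed.
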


\begin{proof}[Proof of Lemma \ref{notextendable->ext=0}]
By \cite{ButlerRingel} the Auslander-Reiten translate $\tau^{-1} M$ of a band module $M$ is isomorphic to $M$. It is well known that
\[
\Ext_{\mathcal A}^1(N,M)\simeq\HomP_{\,\mathcal A}(\tau^{-1} M,N)
\]
for any finite dimensional $\mathcal A$-modules $M,N$, where
\[
\HomP_{\,\mathcal A}(\tau^{-1} M,N) =\Hom_{\mathcal A}(\tau^{-1} M,N)/\mathcal P(\tau^{-1} M,N)
\]
and $\mathcal P(\tau^{-1} M,N)$ is the subspace of $\Hom_{\mathcal A}(\tau^{-1} M,N)$ consisting of all homomorphisms that factor through a projective $\mathcal A$-module.
Therefore it suffices to show that any morphism $M(c,n,\mu)\longrightarrow M(b,m,\lambda)$ factors through a projective $\mathcal A$-module. The condition $\lambda \neq \mu,\mu^{-1}$ asserts that $M(c,n,\mu)$ and $M(b,m,\lambda)$ are non-isomorphic and this allows us to apply Proposition \ref{band->band}, which yields a basis for
\[
\Hom_{\mathcal A}(M(c,n,\mu),M(b,m,\lambda)).
\]
We show that any morphism of the form
\[
\iota_{j,w,(b,m),\lambda} \circ \pi_{i,w,(c,n),\mu}:M(c,n,\mu) \longrightarrow M(b,m,\lambda)
\]
with $j \in \sub(w,(b,m))$ and $i \in \fac(w,(c,n))$ factors through a projective $\mathcal A$-module. Fix a string $w$, $j \in \sub(w,(b,m))$ and $i \in \fac(w,(c,n))$.
 Up to equivalence of bands, we may assume that $i=n$ and $j=m$. Moreover, we can assume that $m \in \sub_1(w,(b,m))$ and $n \in \sub_1(w,(c,n))$ up to replacing $\lambda$ by $\lambda^{-1}$ and $\mu$ by $\mu^{-1}$ if necessary.
By the definition of the sets $\sub_1$ and $\fac_1$ there are $s,t \geq 1$, strings $u,v$ and arrows $\alpha,\beta,\gamma,\delta$ with
\[
(b,sm)=w\beta u \alpha^{-1} \text{ and } (c,tn)=w\delta^{-1} v \gamma.
\]
As the pair $((b,m),(c,n))$ is not extendable,
\[
(d,n+m)=c(1) \cdots c(n)b(1) \cdots b(m)
\]
cannot be a quasi-band. As $c(n)=\gamma$ and $b(m)= \alpha^{-1}$ this can only happen if one of the words
\[
c(1) \dots c(n)b(1) \cdots b(m-1) \text{ and } b(1) \cdots b(m) c(1) \cdots c(n-1)
\]
is not a string. We just consider the case, where the word
\[x\defeq c(1) \cdots c(n) b(1) \cdots b(m-1)\]
 is not a string, as the other case is treated similarly. Let $1 \leq i \leq n$ be minimal with the property that $c(i),c(i+1) ,\ldots , c(n)$ are arrows and let $1 \leq j \leq m-1$ be maximal, such that $b(1) ,\ldots, b(j)$ are arrows. As $x$ is not a string, we see that the path
\[
c(i)c(i+1) \cdots c(n) b(1) \cdots b(j)
\]
belongs to the ideal $I$. We set
\[
q=c(i)c(i+1)\cdots c(n) \text{ and } r=b(1) \cdots b(j).
\]
Obviously $w\beta \in \Ld(r)$, because otherwise $r \in \Ld(w)$, which implies that $qr$ is a string. We may thus decompose $r=w r'$ for some non-trivial path $r'$.
Let $P=P_{s(r)}$ be the projective $\mathcal A$-module corresponding to the vertex $s(r)$. Obviously $P$ is isomorphic to $M(q'r p^{-1})$ for some paths $p$ and $q'$ with $q=q'' q'$ for a non-trivial path $q''$. We obtain the sequence of morphisms
\[
\begin{xy}
\xymatrix{
M(c,n,\mu) \ar[r]^{h}& M(q'w) \ar[r]^/-1em/{g} & P=M(q'wr'p^{-1})\ar[r]^/0.8em/{f} & M(b,m,\lambda),
}
\end{xy}
\]
where $h$ is the factorstring morphism corresponding to 
\[
n-l(q') \in\fac_1(q'w,(c,n)),
\]
 $g$ is the substring morphism corresponding to the decomposition
\[
q'rp^{-1} = (1_{t(q')})\, (q'w) \, (r'p^{-1})
\] and $f$ is the morphism sending $e_{q'r}$ to $\ov{1 \otimes e_{l(r)}}$. 
As
\[
f \circ g \circ h =\iota_{j,w,(b,m),\lambda} \circ \pi_{i,w,(c,n),\mu},
\] the proof is complete.

\end{proof}

\begin{proof}[Proof of Lemma \ref{extenable->extneq0}] As $((b,m),(c,n))$ is extendable, there are $s,t \geq 1$, strings $u,v,w$ and arrows $\alpha,\beta,\gamma,\delta$ with
\[
(b,sm)=w\beta u \alpha^{-1} \text{ and } (c,tn)=w\delta^{-1} v\gamma
\]
 such that
\[(d,n+m)\defeq(c,n)(b,m) \defeq c(1)\cdots c(n) b(1) \cdots b(m)\] is a quasi-band.

Let $x=c(1)\cdots c(n)$ and $y=b(1)\cdots b(m)$ as strings.
We divide the proof into four steps:
\begin{itemize}
\item[a)] $l(w) < n+m$,
\item[b)] $n+m \in \sub(xw,(d,n+m))$,
\item[c)] $n \in \fac(yw,(d,n+m))$ and
\item[d)] for any $\lambda,\mu \in k^*$ the sequence
\[
\begin{xy}
\xymatrix{
0 \ar[r]& M(c,n,\mu)\ar[r]^/-1em/f & M(d,n+m-\lambda\mu)\ar[r]^/1em/g &M(b,m,\lambda)\ar[r]&0,
}
\end{xy}
\]
where 
\[
f= \iota_{n+m,xw,(d,n+m),-\lambda\mu} \circ \pi_{n,xw,(c,n),\mu}
\]
and 
\[
g= \iota_{m,yw,(b,m),\lambda} \circ \pi_{n,yw,(d,n+m),-\lambda\mu}
\]
is exact and does not split.
\end{itemize}

\textit{Proof of} a): If we assume that $l(w) \geq n+m$, we obtain the contradiction
\[
\alpha^{-1}=b(m)=c(m)=c(n+m)=b(n+m)=b(n)=c(n) =\gamma.
\]

\textit{Proof of} b): As $d(n+m)=b(m)=\alpha^{-1}$ is an inverse arrow and $d(i)=c(i)$ for $i=1,\ldots,n$, it suffices to show that
$d(i+n)=b(i)$ for $i=1,\ldots,l(w)+1$, as $b(l(w)+1)=\beta$ is an arrow. This is obvious if $i \leq m$. We may thus assume that $i>m$. As  $l(w) < n+m$ by a), we see that  $0 < i-m \leq n$ and $i-m \leq l(w)$ and thus
\[
d(i+n)=d(i-m) =c(i-m)=b(i-m)=b(i).
\]

Statement c) follows dually.

\textit{Proof of} d): We decompose $f$ as the sum $\sum\limits_{i=0}^{l(w)+n}f_i$ of $k$-linear maps
\[
f_i:M(c,n,\mu) \longrightarrow M(d,n+m,-\lambda\mu).
\]
Note that, in order to keep the coefficients combinatorially simple, we adapt the basis of $M(c,n,\mu)$ to $i$ for the definition of $f_i$: For $0 \leq i\leq l(w)+n$ and $i \leq l <i+n$ we set
\[
f_i(\overline{1 \otimes e_l})\defeq \begin{cases}
\overline{1\otimes e_i}&\text{if } i=l\\
0&\text{otherwise.}\\
\end{cases}
\]
\noindent
Similarly, we decompose $g$ as the sum $\sum\limits_{k=n}^{l(w)+m+n} g_k$ of $k$-linear maps
\[
g_k: M(d,n+m,-\lambda\mu) \longrightarrow M(b,m,\lambda),
\]
where $g_k$ is defined as follows:
For $n\leq k \leq  l(w)+n+m$ and $k \leq l <k+n+m$ we set
\[
g_k(\overline{1 \otimes e_l})\defeq \begin{cases}
\overline{1\otimes e_{l-n}}&\text{if } k=l\\
0&\text{otherwise.}\\
\end{cases}
\]

Applying Lemma \ref{?->injective}, we see that $f$ is injective and $g$ surjective.
In order to prove that the sequence is exact, it remains to show that $g\circ f=0$. We have
\[
g \circ f= \sum \limits_{i=0}^{l(w)+n}\; \sum \limits_{k=n}^{l(w)+m+n} g_{k} \circ f_{i}.
\]
We claim that $g_{k}\circ f_i =0$ unless $(i,k)$ belongs to one of the disjoint sets
\[
A \defeq \{(i,i+n+m): 0 \leq i \leq  l(w)\}
\]
and 
\[
B \defeq \{(i,i): n \leq i \leq  l(w)+n\}.
\]

Assume that there are $0 \leq i\leq l(w)+n$, $n \leq k \leq n+m+l(w)$ and $0 \leq l <n$ such that $g_k\circ f_i(\ov{1\otimes e_l}) \neq 0$. From the definition of $f_i$ we obtain
$l-i \in n\Z$ and $f_i(\ov{1\otimes e_l})= \xi \ov{(1\otimes e_i)}$ for a $\xi \in k^*$ and from the definition of $g_k$ we see that $k-i \in (n+m)\Z$. As
$-l(w)\leq k-i  \leq n+m+l(w)$ and $l(w) < n+m$, we obtain either $k-i =0$ and thus $(i,k)\in B$ or $k-i=n+m$ and hence $(i,k) \in A$.
We have 
\[
g\circ f= \sum\limits_{(i,k)\in A}g_k\circ f_i + \sum \limits_{(i,k)\in B}g_k\circ f_i = \sum\limits_{(i,k)\in A} (g_k\circ f_i + g_{k-m}\circ f_{i+n})
\]
and suffices to show that $(g_k\circ f_i + g_{k-m}\circ f_{i+n})=0$ for $(i,k) \in A$. Let $(i,k) \in A$. For any $i \leq l<n+i$ we have
\[f_i(\ov{1\otimes e_l})=f_{i+n}(\ov{1 \otimes e_l})=0\] and thus
\[g_k\circ f_i (\ov{1 \otimes e_l}) + g_{k-m}\circ f_{i+n}(\ov{1 \otimes e_l})=0,\]
unless $l=i$. In case $l=i$ we obtain

\begin{eqnarray*}
g_kf_i(\ov{1\otimes e_i})&=& g_k(\ov{1\otimes e_i}) \\
&=& g_k(-\lambda\mu\cdot  \ov{1\otimes e_{i+n+m}})\\
&=& g_k(-\lambda\mu\cdot  \ov{1\otimes e_k})\\
&=& -\lambda\mu\cdot  \ov{1\otimes e_{k-n}}
\end{eqnarray*}
and
\begin{eqnarray*}
g_{k-m}f_{i+n}(\ov{1\otimes e_i})&=&g_{k-m}f_{i+n}(\mu\cdot  \ov{1\otimes e_{i+n}})\\
&=&g_{k-m}(\mu \cdot \ov{1\otimes e_{i+n}}) \\
&=&g_{k-m}(\mu \cdot \ov{1\otimes e_{k-m}}) \\
&=& \mu \cdot \ov{1\otimes e_{k-m-n}}\\
&=& \lambda\mu\cdot   \ov{1\otimes e_{k-n}}
\end{eqnarray*}
and thus $g \circ f=0$.

To complete the proof, it remains to show that the short exact sequence
\[
\begin{xy}\xymatrix{0\ar[r]& M(c,n,\mu)\ar[r]^/-0.8em/{f} &M(d,n+m,-\lambda\mu) \ar[r]^/0.8em/{g}& M(b,m,\lambda) \ar[r]&0,\\
}
\end{xy}
\]
does not split. It suffices to show that $M(d,n+m,-\lambda\mu)$ is not isomorphic to $M(c,n,\mu) \oplus M(b,m,\lambda)$. As
\[
\sharp \sub(w,(c,n)) + \sharp\sub (w,(b,m)) > \sharp\sub(w,(d,n+m)),
\]
there is an $\mathcal A$-module $U$, such that
\[[U,M(d,n+m,-\lambda\mu)]\neq [U,M(c,n,\mu) \oplus M(b,m,\lambda)],\] by Proposition \ref{string->band} about the homomorphism spaces between string and band modules. This shows that 
$M(d,n+m,-\lambda\mu)$ and $M(c,n,\mu) \oplus M(b,m,\lambda)$ cannot be isomorphic.
\end{proof}

\newpage
\section*{{\rm Acknowledgments}}
The author thanks his supervisor, Professor Christine Riedtmann, for her aid and guidance and for insisting on the readability of this paper.

\bigskip

\noindent
Manuel Rutscho\\
Mathematisches Institut\\
Universit\"at Bern\\
Sidlerstrasse 5\\
CH-3012 Bern\\
Switzerland\\
e-mail: manuel.rutscho@math.unibe.ch,\\

\end{document}